\documentclass[12pt]{amsart}
\usepackage{diagrams}
\newtheorem{theorem}{Theorem}[section]
\newtheorem{prop}[theorem]{Proposition}
\newtheorem{lemma}[theorem]{Lemma}

\newtheorem{cor}[theorem]{Corollary}
\newtheorem{remark}[theorem]{Remark}

\newcommand{\s}{\mathfrak{s}}
\newcommand{\rr}{\mathfrak{r}}
\newcommand{\kk}{\mathfrak{k}}
\renewcommand{\tt}{\mathfrak{t}}
\newcommand{\im}{\mbox{im}}

\newcommand{\spinc}{{\mbox{spin$^c$} }}
\newcommand{\OS}{{Ozsv\'ath-Szab\'o} }
\newcommand{\OnS}{{Ozsv\'ath and Szab\'o}}
\newcommand{\zee}{\mathbb{Z}}
\newcommand{\arr}{\mathbb{R}}

\newcommand{\cee}{\mathbb{C}}
\newcommand{\eff}{\mathbb{F}}
\newcommand{\R}{{\mathcal R}}
\newcommand{\K}{{\mathcal K}}
\newcommand{\T}{{\mathcal T}}

\renewcommand{\O}{{\mbox{\normalfont\gothfamily O}}}
\newcommand{\cO}{\mathcal O}
\renewcommand{\L}{{\mathcal L}}
\newcommand{\M}{{\mathcal M}}
\newcommand{\ts}{\textstyle}
\newcommand{\ul}{\underline}
\newcommand{\tX}{\widetilde{X}}
\newcommand{\tK}{\widetilde{K}}
\newcommand{\tSigma}{\widetilde{\Sigma}}
\newcommand{\tZ}{\widetilde{Z}}

\newcommand{\tkk}{\tilde{\kk}}
\newcommand{\cpbar}{\overline{\cee P}^2}

\usepackage{graphicx,amsfonts,yfonts}

\begin{document}

\title{Knotted surfaces in 4-manifolds}
\author{Thomas E. Mark}

\begin{abstract} Fintushel and Stern have proved that if $\Sigma\subset X$ is a symplectic surface in a symplectic 4-manifold such that $\Sigma$ has simply-connected complement and nonnegative self-intersection, then there are infinitely many topologically equivalent but smoothly distinct embedded surfaces homologous to $\Sigma$. Here we extend this result to include symplectic surfaces whose self-intersection is bounded below by $2-2g$, where $g$ is the genus of $\Sigma$. 

We make use of tools from Heegaard Floer theory, and include several results that may be of independent interest. Specifically we give an analogue for \OS invariants of the Fintushel-Stern knot surgery formula for Seiberg-Witten invariants, both for closed 4-manifolds and manifolds with boundary. This is based on a formula for the \OS invariants of the result of a logarithmic transformation, analogous to one obtained by Morgan-Mrowka-Szab\'o for Seiberg-Witten invariants, and the results on \OS invariants of fiber sums due to the author and Jabuka. In addition, we give a calculation of the twisted Heegaard Floer homology of circle bundles of ``large'' degree over Riemann surfaces.\end{abstract}

\maketitle

\section{Introduction}

Some time ago, Fintushel and Stern introduced a technique they called ``rim surgery'' for changing the embedding of a smooth surface $\Sigma$ in a closed 4-manifold $X$ \cite{FSrim, FSaddend}. Their construction makes use of a knot $K\subset S^3$, and can be seen as an instance of their earlier ``knot surgery'' construction, applied to the complement of $\Sigma$. The interesting aspect of the construction is that under suitable conditions, Fintushel and Stern were able to show that the resulting surface $\Sigma_K\subset X$ is topologically equivalent to $\Sigma$, but smoothly distinct: rim surgery results in a {\it smooth} knotting of $\Sigma$ but not a {\it topological} one. 

To ensure that the topological type of $(X,\Sigma)$ is unchanged by the construction, it suffices to assume that the complement $Z$ of a regular neighborhood of $\Sigma$ is simply connected (see \cite{FSrim,boyer,freedman}, also Remark \ref{pi1remark} below). Our current concern is the problem of distinguishing the smooth types of $(X,\Sigma)$ and $(X,\Sigma_K)$. Using Seiberg-Witten theory, Fintushel and Stern were able to show that if $X$ is a symplectic 4-manifold and $\Sigma\subset X$ is a symplectic surface, and if the self-intersection $\Sigma.\Sigma$ is nonnegative, then $(X,\Sigma)$ and $(X,\Sigma_K)$ are smoothly distinct whenever $K$ has nontrivial Alexander polynomial.

Our purpose here is to revisit the rim surgery construction using the tools of Heegaard Floer theory and \OS 4-manifold invariants.  In particular, we have the following result.

\begin{theorem}\label{mainthm} Let $X$ be a closed symplectic 4-manifold and $\Sigma_0 \subset X$ a smoothly embedded symplectic surface of genus $g\geq 1$. Assume that
\begin{itemize}
\item $\pi_1(X\setminus \Sigma_0) = 1$, and 
\item the self-intersection of $\Sigma_0$ satisfies $\Sigma_0.\Sigma_0 \geq 2-2g$. 
\end{itemize}
Then there exist infinitely many smooth surfaces $\Sigma_n\subset X$, $n=1,2,\ldots$, homologous to $\Sigma_0$, that are topologically equivalent to $\Sigma_0$ but smoothly inequivalent. That is, there exist homeomorphisms of pairs $(X,\Sigma_n)\simeq (X,\Sigma_m)$ for all $n, m\geq 0$, but no diffeomorphisms between these pairs unless $n = m$.
\end{theorem}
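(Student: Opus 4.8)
The plan is to realize the $\Sigma_n$ by \emph{rim surgery} and to distinguish them smoothly with the \OS analogue of the Fintushel--Stern knot surgery formula. Fix once and for all an embedded nonseparating curve $c\subset\Sigma_0$ and a sequence of knots $K_n\subset S^3$ with pairwise distinct Alexander polynomials, for instance $K_0$ the unknot and $K_n$ the connected sum of $n$ trefoils, so that $\Delta_{K_n}(t)=(t-1+t^{-1})^n$. Let $\Sigma_n$ be obtained from $\Sigma_0$ by rim surgery along $c$ using $K_n$; each $\Sigma_n$ is a smooth surface homologous to $\Sigma_0$, since rim surgery is supported in a neighbourhood $c\times B^3$ of $c$ and alters $\Sigma_0$ there by a null-homologous cycle. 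The structural input is the standard fact (this is exactly the description, recalled in the introduction, of rim surgery as knot surgery on the complement) that if $Z=X\setminus N(\Sigma_0)$ is the exterior, then $X\setminus N(\Sigma_n)$ is obtained from $Z$ by Fintushel--Stern knot surgery along the rim torus $T_c=c\times S^1_{\mathrm{fib}}\subset\partial N(\Sigma_0)\subset Z$, an embedded square-zero torus. Since $\Sigma_0.\Sigma_0\ge 2-2g$, equivalently $-\Sigma_0.\Sigma_0\le 2g-2$, we may symplectically cap $(X,\Sigma_0)$ by a symplectic fiber sum with a model pair $(Y,\Sigma_Y)$, where $\Sigma_Y\subset Y$ is a symplectic surface of genus $g$ and self-intersection $-\Sigma_0.\Sigma_0$; carrying the $\Sigma_n$ along, this produces closed symplectic $4$-manifolds $W_n$ with $W_n=(W_0)_{T_c}(K_n)$, i.e.\ knot surgery on the fixed closed symplectic manifold $W:=W_0$ along $T_c$, and (enlarging the model $Y$ if necessary) $b^+(W)>1$.

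For the topological statement, observe that $\pi_1(X\setminus\Sigma_0)=1$ forces the rim circle $c$, the fibre meridian $S^1_{\mathrm{fib}}$, and hence the torus $T_c$ to lie in a simply connected region, and the meridian to be null-homotopic in $Z$. By topological surgery theory---Freedman's theorem, as applied in \cite{FSrim,boyer,freedman} and Remark~\ref{pi1remark}---knot surgery along such a torus does not change the homeomorphism type of the exterior rel boundary, and the homeomorphism extends across the normal $D^2$-bundle to give homeomorphisms of pairs $(X,\Sigma_n)\simeq(X,\Sigma_0)$, hence $(X,\Sigma_n)\simeq(X,\Sigma_m)$ for all $n,m\ge 0$.

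For smooth inequivalence, suppose there is a diffeomorphism of pairs $(X,\Sigma_n)\to(X,\Sigma_m)$. It restricts to a diffeomorphism of exteriors taking the boundary circle bundle to itself up to isotopy, hence extends by the model cap to a diffeomorphism $W_n\to W_m$. Now apply the \OS knot surgery formula developed here: with $c$ nonseparating (and, where needed, passing to twisted coefficients with respect to a cohomology class for which $T_c$ is homologically essential---this is precisely where the computation of the twisted Heegaard Floer homology of the boundary circle bundle, a circle bundle of ``large'' degree over $\Sigma_g$, is used) one obtains
\[
\ul{\Phi}_{W_n}=\ul{\Phi}_{W}\cdot\Delta_{K_n}(t),
\]
an identity of \OS invariants valued in a completed group ring, with $t$ recording the relevant homology/Novikov variable. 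Since $W$ is a closed symplectic $4$-manifold with $b^+>1$, Taubes' nonvanishing theorem together with the identification of the \OS and Seiberg--Witten invariants gives $\ul{\Phi}_W\ne 0$, so $\ul{\Phi}_{W_n}$ determines $\Delta_{K_n}(t)$ up to units and the substitution $t\mapsto t^{-1}$. A diffeomorphism $W_n\to W_m$ would then force $(t-1+t^{-1})^n\doteq(t-1+t^{-1})^m$, hence $n=m$; thus the $\Sigma_n$ are pairwise smoothly inequivalent.

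The \emph{main obstacle} is the knot surgery formula itself in the form needed here: a formula for the (twisted) \OS invariants of a knot surgery along a square-zero torus in a $4$-manifold with boundary a circle bundle of large degree over a Riemann surface. Following the Seiberg--Witten template, this is to be assembled from the logarithmic transformation formula (the \OS analogue of Morgan--Mrowka--Szab\'o), the fiber sum formula of the author and Jabuka, and---crucially---the direct calculation of the twisted Heegaard Floer homology of circle bundles of large degree over Riemann surfaces; the range in which that calculation is clean is exactly the range picked out by the hypothesis $\Sigma_0.\Sigma_0\ge 2-2g$ (which also guarantees the existence of the symplectic model cap). A secondary point, easily dispatched once $c$ is taken nonseparating, is to verify that the rim torus remains essential---in twisted homology, if it becomes null-homologous after capping---so that $\Delta_{K_n}$ genuinely appears in $\ul{\Phi}_{W_n}$. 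Granting the knot surgery formula, the remainder of the argument is the bookkeeping indicated above.
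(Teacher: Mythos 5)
Your overall strategy---realize $\Sigma_n$ by rim surgery, interpret it as knot surgery on the complement along the rim torus, and detect the change by a knot surgery formula---matches the framework of the paper, but the specific closing-up argument you propose has a genuine gap, and as a consequence you have mislocated where the twisted Floer homology calculation for circle bundles enters.

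The gap is the sentence ``It restricts to a diffeomorphism of exteriors taking the boundary circle bundle to itself up to isotopy, hence extends by the model cap to a diffeomorphism $W_n\to W_m$.'' A diffeomorphism of pairs $(X,\Sigma_n)\to(X,\Sigma_m)$ does restrict to a diffeomorphism $Z_n\to Z_m$ of exteriors, and on the boundary gives a circle-bundle automorphism $\phi_\partial$ of $Y_{-\Sigma_0\cdot\Sigma_0}$. But $\phi_\partial$ need not be isotopic to the identity, and need not extend over the fixed model cap $Z'=Y\setminus N(\Sigma_Y)$. Gluing $Z_n$ and $Z'$ via two different boundary identifications can give non-diffeomorphic closed manifolds, so you cannot conclude $W_n\cong W_m$ from $(X,\Sigma_n)\cong(X,\Sigma_m)$; at best you get $Z_n\cup_{\mathrm{gl}}Z'\cong Z_m\cup_{\mathrm{gl}\circ\phi_\partial^{-1}}Z'$, and comparing the \OS series of these two (which are formed with different gluings) requires an extra argument about gluing-(in)dependence of the closed invariant that you have not supplied. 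This is precisely the problem the paper sidesteps by never closing up: the distinguishing invariant is the \emph{relative} invariant $\Psi_{Z_n}\in HF^-(\partial Z_n)$, which is intrinsically an invariant of the compact manifold with boundary $Z_n$, up to the sign/unit ambiguity, and so is honestly preserved by a diffeomorphism $Z_n\to Z_m$. The fiber sum with a model $(Y,\Sigma_Y)$ does appear in the paper, but only as a tool to prove that $\Psi_{Z,\kk}\neq 0$ (Theorem~\ref{nonvanthm}), not as the final object one distinguishes.

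Because the paper's argument takes place in $HF^-(\partial Z_n)$ rather than in the Laurent-series ring of a closed manifold, the calculation of twisted Heegaard Floer homology of circle bundles of large degree (Theorem~\ref{calcthm}) is doing real work: one needs to know that the degree of $HF^-(Y_{2g-1},\s_{\pm(g-1)})$ containing the relative invariant is a free $R_Y$-module of rank one, so that multiplication by $\Delta_{K_n}(t)$ genuinely changes the invariant even up to module automorphisms. In your proposal this calculation is cited in a parenthetical about making the rim torus essential, which is not where it is needed: the essentiality of the rim torus in $Z$ and in the capped-off manifold follows from elementary Mayer--Vietoris/Lefschetz duality arguments given $\pi_1(Z)=1$ (and a simply connected cap), and the boundary 3-manifold relevant to the closed knot surgery formula is $T^3=\partial N(T)$, not a circle bundle over $\Sigma$. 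Finally, because the closed argument must be pushed into the regime where the circle bundle degree is strictly larger than $2g-2$ in absolute value, the paper also needs a further blowup (Theorem~\ref{blowupthm}) to move from self-intersection $2-2g$ (where nonvanishing is proved) to $1-2g$ (where the boundary Floer homology is a free rank-one module); your proposal has no counterpart to this step. In short, the capping-off route as written does not close, and the ingredients you identified as ``the main obstacle'' are in fact deployed in the paper's argument in a different way than your sketch envisions.
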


This theorem strengthens Fintushel and Stern's result to the extent that the square $\Sigma_0.\Sigma_0$ is allowed to be negative. 

It is not hard to construct symplectic surfaces $\Sigma$ in minimal symplectic manifolds, having simply-connected complement, but violating the condition on $\Sigma.\Sigma$ given in the theorem. It is an intriguing question whether rim surgery changes the smooth embedding of such surfaces. 

The knotted surface $\Sigma_K$ will generally not be symplectic. Indeed, if the Alexander polynomial $\Delta_K(t)$ is not trivial then one can use an argument entirely analogous to one given by Fintushel and Stern in \cite{FSrim} to show that $\Sigma_K$ is not isotopic to a symplectic surface.

The proof of Theorem \ref{mainthm} is based on a number of auxiliary results, which we hope will also be of interest. To give context to these results, we first recall the construction of $\Sigma_K$. Let $c$ be a simple closed curve on $\Sigma$, and $K\subset S^3$ a knot. A neighborhood of $c$ on $\Sigma$ is an annulus $A = S^1\times [-1,1]$, and the normal bundle of $\Sigma$ can be trivialized over this annulus: thus the restriction of the normal bundle over $A$ can be identified with $A\times D^2$. The surface $\Sigma$ cuts each normal tube $\theta\times[-1,1]\times D^2$ (where $\theta \in S^1 \cong c$) in the core arc $L =\theta\times[-1,1]\times 0$. In each tube, replace $L$ by a knotted arc $K_0\subset [-1,1]\times D^2$, whose obvious ``closure'' in $S^3$ is $K$. The union of the resulting copies of $K_0$ gives a knotted annulus in $S^1\times [-1,1]\times D^2$, which can be glued to the complement of $A$ in $\Sigma$ to give a new embedded surface $\Sigma_K$. 

The construction can be rephrased as follows: rather than replacing the core arc $L$ by a knotted arc $K_0$, we remove a neighborhood $V = S^1\times D^2$ of a small linking circle of $L$ and replace it by the exterior $E(K)$ of the knot $K$ (in such a way as to preserve the homology). Performing this operation in each normal tube $\theta\times[-1,1]\times D^2$, we have replaced $S^1\times V = S^1\times S^1\times D^2$ by $S^1\times E(K)$, in such a way that the boundary of the Seifert surface for $K$ is matched with the boundary of a normal disk $pt\times pt \times \partial D^2\subset S^1\times S^1\times D^2$. This construction is an instance of {\it knot surgery} using the torus $T$ given by the product of $c\subset \Sigma$ with the boundary of a (slightly smaller) normal disk.

In a remarkable paper \cite{FSknot}, Fintushel and Stern determined the behavior of the Seiberg-Witten invariants of a closed 4-manifold under knot surgery along a torus satisfying certain conditions. Roughly, they showed that the Seiberg-Witten invariant is multiplied by the Alexander polynomial of $K$. Our first result here is an analogous statement for the \OS 4-manifold invariants $\O_{X}$ (Theorem \ref{closedknotsurg}). The proof follows much the same lines as Fintushel and Stern's original argument based on the skein relation for the Alexander polynomial, together with certain gluing results. One of the latter is the formula for the behavior of \OS invariants under fiber sum obtained in \cite{products}, and the other is a theorem giving the behavior of \OS invariants under logarithmic transformations (Theorem \ref{logtransfthm} and Corollary \ref{logtransfcor} below). 

In fact, using the functoriality of \OS invariants under cobordism we are able to give a version of the knot surgery formula for {\it relative} \OS invariants of 4-manifolds with boundary (Theorem \ref{relknotsurgthm} below, stated more fully as Theorem \ref{knotsurgthm} subsequently). To state it, recall that the relative \OS invariant of a 4-manifold $Z$ with boundary $Y$, in a \spinc structure $\s$, is an element of the twisted Floer homology of $Y$:
\[
\Psi_{Z,\s}\in HF^-(Y,\s; M(Z)).
\]
Here $M(Z)$ is the coefficient module $\zee[H^1(Y)/H^1(Z)]$ induced by $Z$. This, and the Floer homology itself, is a module for the group ring $R_Y = \zee[H^1(Y)]$; the relative invariant is well-defined up to an overall sign and up to multiplication by elements of $H^1(Y)$. For the following result, we use brackets $[\Psi_{Z,\s}]$ to indicate the image of $\Psi_{Z,\s}$ in twisted $\zee/2\zee$-coefficient Floer homology $HF^-(Y,\s; M\otimes_\zee \zee/2\zee)$.

\begin{theorem}\label{relknotsurgthm} Suppose $(Z,\s)$ is an oriented \spinc 4-manifold with boundary $Y$, and let $T\subset Z$ be an embedded torus such that
\begin{itemize}
\item $T$ represents an element of infinite order in $H_2(Z;\zee)$, and
\item The homology class $[T]$ lies in the image of the inclusion $H_2(Y;\zee)\to H_2(Z;\zee)$.
\end{itemize}
Let $Z_K$ denote the result of knot surgery on $Z$ using the knot $K\subset S^3$ and the torus $T$. Then the relative \OS invariant of $Z_K$ is related to that of $Z$ by
\begin{equation}\label{surgformula}
[\Psi_{Z_K,\s}] = [\Delta_K(t)\cdot \Psi_{Z,\s}]
\end{equation}
where $\Delta_K$ is the Alexander polynomial of $K$, and $t\in H^1(Y;\zee)$ is Poincar\'e dual to any element of $H_2(Y;\zee)$ mapping to $[T]$.
\end{theorem}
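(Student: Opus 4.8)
The plan is to derive Theorem~\ref{relknotsurgthm} from the closed knot surgery formula, Theorem~\ref{closedknotsurg}, by capping off the boundary and invoking the gluing formula that computes a closed \OS invariant from the relative invariants of the two pieces. Since $[T]$ lies in the image of the inclusion-induced map $H_2(Y;\zee)\to H_2(Z;\zee)$, fix $a\in H_2(Y;\zee)$ mapping to $[T]$; because $[T]$ has infinite order, so does $a$, and $t$ is the Poincar\'e dual of $a$ in $Y$. Every \spinc $3$-manifold bounds a \spinc $4$-manifold, so choose a \spinc filling $(W,\bar\s)$ of $(-Y,\s|_Y)$; performing surgery on loops in $W$ we may assume $H_1(W;\cue)=0$, which forces $H_2(Y;\cue)\hookrightarrow H_2(W;\cue)$, and a Mayer--Vietoris argument then shows that no class of $H_2(Y;\cue)$ that is nonzero in $H_2(Z;\cue)$ can become zero in $H_2(X;\cue)$, where $X:=Z\cup_Y W$; in particular the image of $[T]$ in $H_2(X;\zee)$ still has infinite order. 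Taking a further connected sum of $W$ with a closed \spinc $4$-manifold of large $b^+$ arranges $b^+(X)>1$ with both pieces of positive $b^+$ (positivity of $b^+(Z)$ is implicit in $\Psi_{Z,\s}$ being defined). With these choices $T\subset X$ satisfies the hypotheses of Theorem~\ref{closedknotsurg}, and --- the key point --- knot surgery along $T$ is supported in a neighborhood of $T\subset Z$, so $X_K=Z_K\cup_Y W$ with $W$ and $\bar\s$ untouched.

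Next I would apply the \OS pairing theorem to the splitting $X=Z\cup_Y W$: for a \spinc structure $\mathfrak t$ on $X$ restricting to $\s$ and $\bar\s$, the invariant $\O_{X,\mathfrak t}$ is the image of $[\Psi_{Z,\s}]\otimes[\Psi_{W,\bar\s}]$ under the natural pairing of twisted Floer homologies, the coefficient modules $M(Z)$ and $M(W)$ of the two pieces combining to the trivial module of the closed, admissible $X$. This pairing is a homomorphism of modules over the ring map $\zee/2\zee[H_2(Y;\zee)]\to\zee/2\zee[H_2(X;\zee)]$ induced by the inclusion $Y\hookrightarrow X$ (under which $a\mapsto[T]$), so via Poincar\'e--Lefschetz duality it intertwines multiplication by $t\in H^1(Y;\zee)$ on the relative invariants with multiplication by the Poincar\'e dual of $[T]$ in $X$ on $\O_X$. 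It is precisely here that the two infinite-order hypotheses are used: they guarantee that $t$ does not collapse to the identity after gluing, so that the Alexander polynomial is genuinely visible. Running the same formula with $Z$ replaced by $Z_K$, and feeding in Theorem~\ref{closedknotsurg} --- which, after reduction mod $2$, expresses $\O_{X_K}$ as $\Delta_K(t_T)$ times $\O_X$, with $t_T$ the Poincar\'e dual of $[T]$ in $X$ --- we obtain
\[
\bigl\langle\,[\Psi_{Z_K,\s}]\,,\,[\Psi_{W,\bar\s}]\,\bigr\rangle \;=\; \bigl\langle\,[\Delta_K(t)\cdot\Psi_{Z,\s}]\,,\,[\Psi_{W,\bar\s}]\,\bigr\rangle
\]
for every \spinc filling $(W,\bar\s)$ of $(-Y,\s|_Y)$.

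The main obstacle is the final step: upgrading equality of all these pairings to the equality $[\Psi_{Z_K,\s}]=[\Delta_K(t)\cdot\Psi_{Z,\s}]$ in $HF^-(Y,\s;M(Z)\otimes_\zee\zee/2\zee)$, up to the overall sign and $H^1(Y)$ ambiguities intrinsic to the relative invariant. This module is finitely generated over $R_Y\otimes\zee/2\zee$, and what is needed is that it is separated by the pairings against relative invariants of fillings; I would secure this by choosing $W$ large enough that the induced pairing is injective on the relevant finitely generated piece --- for instance, starting from a filling realizing $\langle[\Psi_{Z,\s}],[\Psi_{W,\bar\s}]\rangle\neq0$ (if $[\Psi_{Z,\s}]=0$ the assertion is vacuous) and propagating nondegeneracy across the finitely many powers of $t$ occurring in $\Delta_K$ via the $R_Y$-module structure. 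A variant closer in spirit to Fintushel and Stern's original argument, which avoids any such detection statement, is to apply the Alexander skein relation $\Delta_{K_+}-\Delta_{K_-}\doteq(t^{1/2}-t^{-1/2})\Delta_{K_0}$ directly to the relative invariants of $Z_{K_+}$, $Z_{K_-}$ and $Z_{K_0}$: these manifolds differ by surgeries inside the knot-surgery region, so the surgery exact triangle and the naturality of \OS invariants under cobordism produce a matching relation, the inductive step being supplied by the logarithmic transformation formula (Theorem~\ref{logtransfthm}) and the unknot, for which $Z_K=Z$, furnishing the base case; the cost is more bookkeeping with link surgeries. In either approach, the passage to $\zee/2\zee$ coefficients --- hence the brackets $[\,\cdot\,]$ --- is forced by the sign indeterminacies present in the gluing, fiber-sum and logarithmic-transformation formulas on which the argument relies.
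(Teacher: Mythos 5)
Your main strategy --- cap off the boundary with a filling $(W,\bar{\s})$, apply the closed knot-surgery formula to $X = Z\cup_Y W$, and then try to recover the relative invariant of $Z_K$ from its pairings against all fillings --- is genuinely different from the paper's, and it has a gap exactly where you flag it. The paper never caps off: it decomposes $Z$ itself as $N\cup W_0$ where $N = T\times D^2$ is a tubular neighborhood of $T$ and $W_0 = Z\setminus N$ is viewed as a cobordism from $T^3$ to $Y$. Then $Z_K = N_K\cup W_0$ with $N_K = S^1\times(S^3\setminus nbd(K))$, and the composition law gives $\Psi_{Z_K,\s,\eta} = \Pi_*\circ F^-_{W_0,\s}\circ F^-_{N_K,\s,\eta}(\Theta^-)$. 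Corollary \ref{relinvtcor} computes $[\Psi_{N_K}] = [\Delta_K(t)/(t-1)]$, so (modulo 2) $F^-_{N_K} = \Delta_K(t)\cdot F^-_{N}$; since $t$ is dual to a class on $Y$, multiplication by $\Delta_K(t)$ commutes past $F^-_{W_0}$ and $\Pi_*$ (using symmetry of $\Delta_K$), and the formula drops out with no detection argument required.

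The "upgrading from pairings" step in your proposal does not go through as written. You would need to know that an element of $HF^-(Y,\s;M(Z)\otimes\eff)$ that pairs to zero against $[\Psi_{W,\bar\s}]$ for every filling $(W,\bar\s)$ must vanish (modulo the sign and $H^1(Y)$ indeterminacy). Nothing in the paper asserts this, and it is false in general: the pairing factors through $HF^-_{red}$, and the set of classes realized as relative invariants of fillings is typically a very small subset of the (infinitely generated over $\zee$) reduced module, so it has no reason to be a separating family. "Starting from a filling with nonzero pairing and propagating nondegeneracy via the $R_Y$-module structure" is not a construction --- the obstruction is precisely to produce fillings whose relative invariants hit enough of the module, and there is no mechanism for that. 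You also assert that $b^+(Z)\geq 1$ is implicit in $\Psi_{Z,\s}$ being defined; it is not ($\Psi_{Z,\s}$ is always defined as $F^-_{Z,\s}(\Theta^-)$), and indeed the theorem in the paper handles $b^+(Z)=0$ via the perturbed theory.

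Your "variant" --- running the skein relation directly on the relative invariants of $Z_{K_\pm}$, $Z_{K_0}$ using Theorem \ref{logtransfthm} --- is closer in spirit to how the paper establishes Theorem \ref{closedknotsurg}, but the paper only uses the skein argument in the closed case; for the relative case it then extracts the single consequence it needs (Corollary \ref{relinvtcor}, the relative invariant of $S^1\times(S^3\setminus nbd(K))$) and pushes that through the composition law. That route is both shorter and avoids the bookkeeping you anticipate. If you want to salvage your proposal, the efficient fix is to abandon the capping-off/detection step entirely and instead compute $F^-_{N_K}(\Theta^-)$ directly via Corollary \ref{relinvtcor}, then apply the composition law to $Z_K = N_K\cup_{T^3} W_0$.
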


Note that since the relative invariant $\Psi_{Z,\s}$ is defined only up to the action of $H^1(Y)$, that is, up to multiplication by a unit in $R_Y$, the above theorem determines the relative invariant of $Z_K$ modulo this indeterminacy. 

The final ingredient in the proof of Theorem \ref{mainthm} is the calculation of the Heegaard Floer homology of any circle bundle over a surface, whose degree is sufficiently large in absolute value. This calculation was carried out for Floer homology with integer coefficients by {\OnS} in \cite{OSknot}; our result (Theorem \ref{calcthm} below) deals with Floer homology with ``univerally twisted'' coefficients. Since these circle bundles arise in a number of natural contexts we expect that our results, phrased in terms of certain syzygy modules associated to a free resolution over $R_Y$ of $\zee$, will find further application.

\subsection*{Outline of proof of Theorem \ref{mainthm}} We can choose a sequence of knots $K_n$ whose Alexander polynomials are all distinct even after reducing coefficients modulo 2; then we take $\Sigma_n$ to be the result $\Sigma_{K_n}$ of rim surgery on $\Sigma_0$ using $K_n$. Fixing some choice of knot $K$, our main goal is to distinguish the diffeomorphism types of the complements $Z$ and $Z_K$ of $\Sigma$ ($=\Sigma_0$) and $\Sigma_K$. As indicated previously, the manifolds $Z$ and $Z_K$ are related by knot surgery along a boundary-parallel torus. With the description given in Theorem \ref{relknotsurgthm} of the relation between the relative invariants of these two manifolds, the key issues in this task are as follows:
\begin{enumerate}
\item[i)] Ascertaining that the relative invariant $\Psi_Z$ is nonzero, and
\item[ii)] Verifying that multiplication by the Alexander polynomial as in \eqref{surgformula} has a nontrivial effect.
\end{enumerate}
To arrange for (i) to hold, we assume that $X$ is a symplectic manifold and therefore has a nonvanishing \OS invariant \cite{OSsymp}. Then, if $\Sigma.\Sigma\geq 0$ (and assuming for simplicity that $b^+(X)\geq 2$), the \OS invariant of $X$ can be calculated by pairing the relative invariants of the two pieces of the decomposition $X = Z \cup nbd(\Sigma)$, and (i) follows immediately. For the case of negative square, assume that $\Sigma$ is a symplectic surface in $X$: then if we can find a standard manifold $E$ having $b^+(E)\geq 2$ and containing a symplectic surface $\Sigma'$ with $\Sigma'.\Sigma' = -\Sigma.\Sigma$, we can form a symplectic fiber sum $M = X\#_{\Sigma = \Sigma'} E$. The \OS invariant of $M$ is nontrivial since $M$ is symplectic, and it can be calculated by pairing relative invariants from $X \setminus nbd(\Sigma) = Z$ and $E \setminus nbd(\Sigma')$, since both sides have nonvanishing $b^+$. We again conclude that $\Psi_Z \neq 0$. However, the adjunction inequality implies that the self-intersection of the surface $\Sigma'$ in the manifold $E$ is bounded above by $2g-2$, hence this argument works only if $\Sigma.\Sigma\geq 2-2g$.

To prove (ii) above, we must calculate the twisted Heegaard Floer homology of the boundary $Y = \partial Z = \partial Z_K$ as a module over $R_Y = \zee[H^1(Y)]$. This boundary is a circle bundle over $\Sigma$, having degree $-n$ if $\Sigma.\Sigma = n$ (since $Y$ is oriented as the boundary of the complement). The Floer homology of circle bundles of large degree is reasonably straightforward to calculate using techniques due to \OnS; in particular if $n=\Sigma.\Sigma < 2-2g$ then we prove below that the ``relevant part'' of the Floer homology of the circle bundle of degree $-n$ is a free module of rank 1 over $R_Y$. Equation \eqref{surgformula} then shows that rim surgery is easily detected in this case: the relative invariant changes by multiplication by a (non-unit) element of $R_Y$, which certainly never has a trivial effect even up to automorphism. 

Of course, the necessary assumptions on the self-intersection of $\Sigma$ in (i) and (ii) are precisely opposite. To bridge the gap, we arrange first that $\Sigma.\Sigma = 2-2g$ by blowing up to reduce the self-intersection, so that (i) still applies to give a nonzero relative invariant. Then we argue that one further blowup, to put us in the situation amenable to point (ii), preserves the nonvanishing relative invariant. This is not quite as straightforward as an application of the ``blowup formula'' for \OS invariants, as the complement of the blown-up surface $\tSigma$ is not a blowup of the complement of $\Sigma$. In particular, it relies on the structure of the twisted Floer homology of circle bundles.

\begin{remark}\label{pi1remark} H. J. Kim and D. Ruberman \cite{kimrub,kimrub2} have considered the effect of rim surgery on the topological type of the embedding of a surface $\Sigma\subset X$, in the case that the complement has nontrivial fundamental group. For example, they show that if $\Sigma$ is a homologically essential surface in $X$ such that $X$ is simply connected and $\pi_1(Z)$ and $\pi_1(Z_K)$ are cyclic (where $Z = X\setminus nbd(\Sigma)$ and $Z_K = X\setminus nbd(\Sigma_K)$ as above), then the result $\Sigma_K$ of rim surgery is topologically isotopic to $\Sigma$. The results of this paper apply equally to this situation to distinguish the smooth types of $(X,\Sigma)$ and $(X,\Sigma_K)$ when $\pi_1(X\setminus\Sigma)$ is nontrivial, assuming that $\Sigma.\Sigma\geq 2-2g$ (and the rest of the setup of Theorem \ref{mainthm}). In fact formula \eqref{surgformula} requires no assumption on fundamental group, though there are technical considerations in the blowup argument mentioned in the preceding paragraph that oblige us to assume that the restriction $H^1(Z)\to H^1(\partial Z)$ is trivial. Since $H^1(Z) = H^1(X)$ when $\Sigma$ is homologically essential, this is certainly true when $X$ is simply connected.
\end{remark}

\subsection*{Organization} The remainder of the paper proceeds as follows. Section \ref{backgroundsec} begins with relevant background material on Heegaard Floer theory, particularly the cut-and-paste results obtained by the author and S. Jabuka in \cite{products}. Then we give a result describing the effect of a logarithmic transformation on both the relative and absolute \OS 4-manifold invariants, analogous (in the closed case) to a formally similar result in Seiberg-Witten theory due to Morgan, Mrowka, and Szab\'o \cite{MMS}. Here we encounter the sign issue mentioned above: at the moment, the sign indeterminacy of the \OS invariants means that our formula is proved only modulo 2.

Section \ref{knotsurgsec} deduces the knot surgery formula in the cases of closed manifolds and manifolds with boundary, based on the results on logarithmic transformation and those on fiber sums from \cite{products}. The sign issue propagates here to give no better than a mod-2 statement.

Section \ref{proofsec} details points (i) and (ii) above: we prove that the complements of certain symplectic surfaces have nonvanishing relative invariants in section \ref{relinvtsec}, and we calculate the twisted Floer homology of circle bundles of large degree in section \ref{calcsec}. 

Finally in section \ref{blowupsec} we ``bridge the gap'' by showing that blowups preserve certain relative invariants, and we spell out the proof of Theorem \ref{mainthm}.

\section{Background and Preliminary Results}\label{backgroundsec}

Here we recall the basic ideas of Heegaard Floer theory for 3-manifolds and the associated invariants of 4-manifolds, set notation, and prove some basic results needed below. For a more detailed introduction to Heegaard Floer homology, the reader is referred to the growing literature on the subject.

\subsection{Invariants for 3- and 4-manifolds}
If $Y$ is a closed, oriented 3-manifold with \spinc structure $\s$, we have a collection of Heegaard Floer homology groups $HF^+(Y,\s)$, $HF^-(Y,\s)$, $HF^\infty(Y,\s)$, $\widehat{HF}(Y,\s)$, which are (relatively) graded modules over $\zee$. More generally, let $R_Y = \zee[H^1(Y;\zee)]$, and suppose $M$ is a module for $R_Y$. Then there is a notion of Heegaard Floer homology with coefficients in $M$, e.g. $HF^-(Y,\s;M)$, and in particular we can consider the ``fully twisted'' Floer homology group $HF^-(Y,\s; R_Y)$ (sometimes also written $\underline{HF}^-(Y,\s)$). This theory was introduced and developed by Ozsv\'ath and Szab\'o; see \cite{OS3} in particular for much of the material below. 

We will also need to refer to the ``perturbed'' version of Heegaard Floer homology as described in \cite{products}, which makes use of a class $\eta\in H^2(Y;\arr)$. Given such a class, one can form the {\it Novikov ring} $\R_{Y,\eta}$ as a completion of the group ring $R_Y$, namely
\[
\R_{Y,\eta} = \{ \sum_{g\in H^1(Y;\zee)} a_g\cdot g\, |\, a_g\in \zee\}\subset \zee[[H^1(Y;\zee)]],
\]
where for each $N\in \zee$ there are only finitely many $g\in H^1(Y;\zee)$ such that $a_g$ is nonzero and $\langle\eta\cup g, [Y]\rangle \leq N$ (we say that $\R_{Y,\eta}$ is the {\it Novikov completion} of $R_Y$ with respect to the function $\eta\cup \cdot: H^1(Y)\to \arr$). As before, if $\M$ is a module for $\R_{Y,\eta}$ we have perturbed Floer homology $HF^-(Y,\s;\M)$ with coefficients in $\M$. A key property of $\R_{Y,\eta}$ is that it is flat as a module over $R_Y$; this has the consequence that if $M$ is a module for $R_Y$ then the associated module $\M = M\otimes_{R_Y} \R_{Y,\eta}$ has Floer homology 
\[
HF^-(Y,\s;\M) = HF^-(Y,\s, M)\otimes_{R_Y} \R_{Y,\eta}.
\]

Heegaard Floer homology is functorial, in the sense that if $W: Y_1\to Y_2$ is a 4-dimensional cobordism with \spinc structure $\rr$, then there is an induced homomorphism $F_{W,\rr}: HF^-(Y_1,\s_1;\zee)\to HF^-(Y_2,\s_2;\zee)$, where $\s_i = \rr|_{Y_i}$. More generally if $M$ is a module for $R_{Y_1}$, there is an induced module $M(W)$ for $R_{Y_2}$ given by
\[
M(W) = M\otimes_{R_{Y_1}} \zee[K(W)],
\]
where $K(W) = \ker(H^2(W,\partial W)\to H^2(W)) \cong H^1(\partial W)/ H^1(W)$, and the action of $R_{Y_1}$ and $R_{Y_2}$ on $\zee[K(W)]$ is by the coboundary maps $H^1(Y_i)\to H^2(W,\partial W)$. The cobordism then induces homomorphisms
\[
F_{W,\rr}: HF(Y_1,\s_1; M)\to HF(Y_2,\s_2; M(W)),
\]
which are well-defined up to sign and multiplication by units in $R_{Y_1}$ and $R_{Y_2}$, i.e., the action by classes in $H^1(Y_1)\oplus H^1(Y_2)$.
If we are given a class $\eta\in H^2(W;\arr)$, we can carry out the construction in the perturbed setting to obtain a homomorphism $F_{W,\s,\eta}^- : HF^-(Y_1,\s_1; \M)\to HF^-(Y_2,\s_2; \M(W,\eta))$, where $\M$ is a module for $\R_{Y_1,\eta_1}$, $\eta_1 = \eta|_{Y_1}$, and
\[
\M(W,\eta) = \M \otimes _{\R_{Y_1,\eta_1}} \K(W,\eta).
\]
Here $\K(W,\eta)$ is the Novikov completion of $\zee[K(W)]$ with respect to the linear function on $K(W)\subset H^2(W,\partial W)$ induced by cup product with $\eta$.

Now suppose $Z$ is an oriented 4-manifold with connected boundary $Y$ and $\s$ is a \spinc structure on $Z$. We can associate to $Z$ several versions of a {\it relative \OS invariant}: the ``twisted'' invariant $\Psi_{Z,\s}\in HF^-(Y,\s|_Y; \zee[K(Z)])$, an ``untwisted'' version $\Psi^{un}_{Z,\s} \in HF^-(Y,\s|_Y;\zee)$, and if $\eta\in H^2(Z;\arr)$ a ``perturbed'' version $\Psi_{Z,\s,\eta} \in HF^-(Y,\s|_Y; \K(Z,\eta))$. Explicitly, let $\Theta^-$ be a generator in top degree of $HF^-(S^3)$ (in the unique \spinc structure on $S^3$; note that since $H^1(S^3) = H^2(S^3) = 0$, no nontrivial twisting or perturbation is possible). Thus $\Theta^-$ is well-defined up to sign. Then we define
\[
\Psi_{Z,\s} = F^-_{Z,\s}(\Theta^-) \in HF^-(Y,\s|_Y; \zee[K(Z)]),
\]
where here and below we adopt the convention that if $Z$ is a 4-manifold with a single boundary component $Y$, then $F_{Z,s}$ refers to the map in Floer homology induced by removing a 4-ball from the interior of $Z$ and regarding the result as a cobordism $S^3\to Y$. The other versions of the relative invariant are defined analogously, using the induced homomorphism in untwisted or perturbed Floer homology as appropriate. In each case, the relative invariant is well-defined up to a sign and translation by the action of an element of $H^1(Y)$ (this action is trivial in the untwisted case).

\OS invariants for closed 4-manifolds are constructed as follows. First, recall \cite[section 4]{products} that there is a bilinear pairing
\begin{equation}\label{pairingdef}
\langle \cdot, \cdot\rangle: HF^+(Y, \s; M)\otimes_{R_Y} HF^-(-Y,\s; N)\to M\otimes_{R_Y} N
\end{equation}
for any \spinc 3-manifold $(Y,\s)$ and $R_Y$-modules $M, N$. Strictly speaking the pairing is ``antilinear'' in the second variable, meaning that $\langle x, g\cdot y\rangle = \bar{g}\cdot\langle x,y\rangle$, where $\bar{g}$ denotes the image of $g\in R_Y$ under the natural involution induced by negation in $H^1(Y)$.

Next, it was shown in \cite{OS3} that if $b^+(W)>0$ then $F^-_{W}$ takes values in the reduced Floer homology $HF^-_{red}(Y_2)$, while $F^+_W$ factors through $HF^+_{red}(Y_1)$. Moreover, the pairing above descends to a pairing on the reduced modules. Thus if $X$ is a closed 4-manifold with $b^+(X)\geq 2$ we can split $X$ along a 3-manifold $Y$ such that $X = Z_1\cup_Y Z_2$ with $b^+(Z_i)\geq 1$ and define
\begin{equation}\label{Odef}
\O_{X,\s} = \langle \tau^{-1}(\Psi_{Z_1,s_1}), \Psi_{Z_2,s_1}\rangle \in \zee[K(Z_1)]\otimes_{R_Y}\zee[K(Z_2)],
\end{equation}
where $\tau: HF^+_{red}(Y)\to HF^-_{red}(Y)$ is the natural isomorphism. According to \cite[section 2.3]{products} there is an identification of $R_Y$-modules
\[
\zee[K(Z_1)]\otimes_{R_Y}\zee[K(Z_2)] \cong \zee[K(X,Y)],
\]
where $K(X,Y) = \ker(H^2(X)\to H^2(Z_1)\oplus H^2(Z_2))$ is the kernel of the Mayer-Vietoris homomorphism. Thus for a given \spinc structure $\s$ on $X$, the invariant $\O_{X,\s}$ is a Laurent polynomial whose variables can be identified with the set of \spinc structures on $X$ whose restrictions to $Z_1$ and $Z_2$ agree with those of $\s$, and whose coefficients can be thought of as numerical invariants of those \spinc structures. Indeed, if $Y$ is an ``admissible cut'' for $X$ in the sense that $K(X,Y)$ is trivial, then $\s$ is determined uniquely by its restrictions to $Z_1$ and $Z_2$ and $\O_{X,\s}$ is a single integer called the {\it \OS invariant} of $(X,\s)$. The latter is the original definition of the \OS invariant in \cite{OS3}; for the interpretation above using twisted coefficients see \cite{products} (Theorem 7.6). The integer-valued \OS invariant associated to $\s\in Spin^c(X)$ is typically denoted $\Phi_{X,\s}$.

The assumption that $b^+(Z_1)$ and $b^+(Z_2)$ are both nonzero can be relaxed if one uses the perturbed theory, i.e., if we assume that there exists a class $\eta\in H^2(X;\arr)$ that restricts nontrivially to $Y$, and use a pairing on the perturbed Floer homologies analogous to the one described above. This construction allows extension of the definition of \OS invariants to 4-manifolds with $b^+(X) = 1$, though in this case the quantity $\O_{X,\s,\eta}$ depends in general on the data $(Y,\eta)$ and also lies in a Novikov ring rather than a group ring (again, see \cite{products} (particularly section 8) for details).

Finally, we remark that the unperturbed theory can be recovered from the perturbed one by taking $\eta = 0$ throughout.

\subsection{Logarithmic transformations}
Suppose $Z$ is a 4-manifold, possibly with boundary, and $T\subset Z$ is a smoothly embedded torus with trivial normal bundle. A (generalized) {\it logarithmic transformation} along $T$ is obtained by choosing an identification of a tubular neighborhood of $T$ with $T\times D^2$, and forming $Z_\phi = (Z\setminus T\times D^2)\cup_\phi (T\times D^2)$, where $\phi$ is any diffeomorphism of the boundary 3-torus. When $T$ represents a nontrivial class in both $H_2(Z;\arr)$ and $H_2(Z_\phi,\arr)$, the gluing formulas of \cite{products} can be used to understand the behavior of the \OS invariants of $Z$ under this operation. However, we will be interested here in the case of nullhomologous tori in closed manifolds, and since it represents little extra effort, the effect of a logarithmic transformation on the twisted relative invariants of 4-manifolds with boundary.

A formula of the type we have in mind is difficult to state in full generality, so we introduce the following device. For a torus $T\subset Z$ in a 4-manifold with boundary $Y$ as above, we define the {\it $T$-averaged relative invariant} of $Z$ in a given \spinc structure $\s$ as follows. Let $W$ be the complement in $Z$ of a regular neighborhood of $T$, thought of as a cobordism $T^3\to Y$ from the three-torus $T^3 = \partial(nbd(T))$ to $Y = \partial Z$. Observe that there exists a factorization $H^1(Z)\to H^1(W)\to H^1(Y)$, hence a natural quotient $H^1(Y)/H^1(Z) \to H^1(Y)/H^1(W)$. In the notation of the previous subsection $K(Z) = H^1(Y)/H^1(Z)$; we write $K_+(W)$ for the group $H^1(Y)/H^1(W)$. Let 
\[
p: \zee[K(Z)] = \zee\left[{\ts\frac{H^1(Y)}{H^1(Z)}}\right]\to \zee\left[{\ts \frac{H^1(Y)}{H^1(W)}}\right] = \zee[K_+(W)]
\]
be the homomorphism induced by the quotient, and write $p_*$ for the induced homomorphism in twisted Heegaard Floer homology.

The $T$-averaged relative invariant of $Z$, written $\Psi_{Z/T,\s}$, is defined to be the image of $\Theta^-$ under the composition of the map $F^-_{T^2\times D^2, \s}: HF^-(S^3)\to HF^-(T^3; \zee)$ in untwisted Floer homology with the twisted-coefficient map $\ul{F}^-_{W,\s}: HF^-(T^3; \zee)\to HF^-(Y; \zee[K_+(W)])$. Thus
\begin{equation}\label{avgdef}
\Psi_{Z/T, \s} =  \ul{F}^-_{W,\s}( F^-_{T^2\times D^2,\s}(\Theta^-)) \in HF^-(Y,\s; \zee[K_+(W)]),
\end{equation}
where in each instance the symbol ``$\s$'' refers to the \spinc structure $\s$ on $Z$ restricted to the appropriate submanifold (and the relative invariant is defined, as usual, up to sign and the action of $H^1(Y)$). Our terminology for $\Psi_{Z/T,\s}$ is justified by the following, whose proof is an exercise in the formal properties of Heegaard Floer theory \cite{OS3}.

\begin{lemma}\label{Tavglemma} The $T$-averaged relative invariant of $Z$ is related to the usual relative invariant by
\[
\Psi_{Z/T,\s} =p_* \sum_{\s'}\Psi_{Z,\s'},
\]
where the sum is over all \spinc structures $\s'\in Spin^c(Z)$ whose restrictions to $W$ and to $T\times D^2$ agree with those of $\s$. Equivalently, the sum is over all \spinc structures of the form $\s' = \s + \delta h$ for $h\in H^1(\partial(T\times D^2))$ and $\delta$ the Mayer-Vietoris coboundary.
\end{lemma}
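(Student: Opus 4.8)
The plan is to read $\Psi_{Z/T,\s}$ as a composite of cobordism maps and to compare it, via the composition law of Heegaard Floer theory, with the composite that computes the twisted relative invariants $\Psi_{Z,\s'}$. Removing a $4$-ball from the interior of the neighborhood $T\times D^2$ exhibits $Z\setminus B^4$ as a composite cobordism: first $N=(T^2\times D^2)\setminus B^4$ from $S^3$ to $T^3$, then $W=Z\setminus nbd(T)$ from $T^3$ to $Y$ (the latter being exactly the cobordism appearing in the definition of $\Psi_{Z/T,\s}$). In these terms the twisted relative invariant $\Psi_{Z,\s'}$ is computed by applying the cobordism maps of $N$ and then $W$ to $\Theta^-$, with fully twisted coefficients throughout. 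By contrast, $\Psi_{Z/T,\s}$ is obtained from the very same composite but using \emph{untwisted} coefficients on the $N$-factor --- that is, after applying the augmentation $\epsilon\colon \zee[K(N)]\to\zee$ to the output of $\ul{F}^-_{N,\s}$ before applying $\ul{F}^-_{W,\s}$.

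I would first dispatch the homological bookkeeping. One has $K(N)=H^1(T^3)/H^1(T^2\times D^2)\cong\zee$, and since $\zee\otimes_{R_{T^3}}(-)$ trivializes the full $H^1(T^3)$-action, a Mayer--Vietoris computation identifies $\zee\otimes_{R_{T^3}}\zee[K(W)]$ with $\zee[K_+(W)]$; this is precisely why $\Psi_{Z/T,\s}$ naturally lands in $HF^-(Y;\zee[K_+(W)])$. Next, by naturality of cobordism maps under homomorphisms of coefficient modules, applying $\epsilon$ on the $N$-factor and then $\ul{F}^-_{W,\s}$ agrees with applying the full composite $\ul{F}^-_{W,\s}\circ\ul{F}^-_{N,\s}$ (computed with fully twisted coefficients) and only afterwards the induced quotient map $\zee[K(N)]\otimes_{R_{T^3}}\zee[K(W)]\to\zee[K_+(W)]$.

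The substance is then the composition law of \cite{OS3}. Because the middle manifold $T^3$ has $H^1(T^3)\neq 0$, the pair $\s|_N,\s|_W$ of \spinc structures (which agree on $T^3$) does not lift uniquely to $Z\setminus B^4$ but only to the coset $\{\s'=\s+\delta h: h\in H^1(\partial(T\times D^2))\}$; the composition law realizes $\ul{F}^-_{W,\s}\circ\ul{F}^-_{N,\s}$ in terms of the maps $\ul{F}^-_{Z\setminus B^4,\s'}$ as $\s'$ runs over this coset, matched up by the appropriate identification of twisted coefficient systems. Post-composing with the quotient to $\zee[K_+(W)]$ from the preceding paragraph carries $\ul{F}^-_{Z\setminus B^4,\s'}(\Theta^-)=\Psi_{Z,\s'}$ to $p_*\Psi_{Z,\s'}$ --- the composite coefficient homomorphism $\zee[K(Z)]\to\zee[K_+(W)]$ is precisely $p$, both being reduction modulo the image of $H^1(T^3)$ --- and summing over the coset yields the asserted formula. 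That the coset of $\s'$ coincides with the set of \spinc structures restricting to $\s$ on $W$ and on $T\times D^2$ is immediate from the Mayer--Vietoris sequence of $Z=W\cup_{T^3}(T^2\times D^2)$.

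I expect the main obstacle to be precisely this coefficient-module bookkeeping: one must verify that the modules $\zee[K(N)]\otimes_{R_{T^3}}\zee[K(W)]$, $\zee[K(Z)]$ and $\zee[K_+(W)]$, together with the augmentation on the $N$-side, the composition-law identification, and the quotient $p$, all fit together consistently --- including the sign arising from the orientation reversal of $T^3$ as it passes from the outgoing end of $N$ to the incoming end of $W$. Everything is to be read modulo the overall sign and the $H^1(Y)$-translation indeterminacy carried by relative invariants, so those ambiguities create no additional difficulty, and the remaining inputs (functoriality, flatness, the composition law) are exactly the formal properties recalled in this section.
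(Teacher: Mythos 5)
Your proof is correct and is precisely the argument the paper has in mind when it calls the lemma ``an exercise in the formal properties of Heegaard Floer theory'': the paper gives no explicit proof, and your reconstruction via the decomposition $Z\setminus B^4 = N\cup_{T^3} W$, the naturality of cobordism maps under the augmentation $\zee[K(N)]\to\zee$, the twisted composition law, and the identification $\zee\otimes_{R_{T^3}}\zee[K(W)]\cong\zee[K_+(W)]$ (with the resulting projection $\zee[K(Z)]\to\zee[K_+(W)]$ being exactly $p$) is the intended route. You also correctly flag that the $H^1(T^3)$-indexed coset of \spinc structures arising from the composition law matches the ``$\s' = \s+\delta h$'' description by Mayer--Vietoris, and that the remaining sign and $H^1(Y)$-translation ambiguities are absorbed into the well-definedness conventions for relative invariants, consistent with the paper's closing caveat after the lemma.
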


More precisely, we should say that there are choices of representatives for the relative invariants $\Psi_{Z,\s'}$ such that the stated formula holds up to multiplication by a unit in $R_Y$.

A technical advantage to working with the $T$-averaged invariant is that the averaged invariant of the result $Z_\phi$ of a logarithmic transformation along $T$ takes values in the same Floer group $HF^-(Y,\s;\zee[K_+(W)])$ as did the averaged invariant for the original $Z$. 

To state the result for logarithmic transformations, let us fix an identification of $T$ with $S^1\times S^1$ and extend this to an identification $\partial(T\times D^2) \cong S^1\times S^1\times S^1$ such that $pt\times \partial D^2$ corresponds to $pt\times pt \times S^1$. Then the diffeomorphism $\phi$---or, its effect on the first homology of the 3-torus---can be described by a $3\times 3$ matrix; in particular the class $[\phi(\partial D^2)]$ is identified with a vector $(p,q,r)$ of integers. This vector determines the diffeomorphism type of $Z_\phi$, so henceforth we will write $Z(p,q,r)$ for the result of such a logarithmic transformation, when the identification $\partial(nbd(T)) = S^1\times S^1\times S^1$ is understood.

\begin{theorem}\label{logtransfthm} Let $T\subset Z$ be an embedded torus with trivial normal bundle in a smooth 4-manifold $Z$ with boundary $Y$, and let $Z(p,q,r)$ be the result of a logarithmic transformation along $T$ as above. Then for any \spinc structure $\s$ on $Z(p,q,r)$ whose restriction to $T\times D^2$ is trivial, there are choices of representatives for the relative invariants such that 
\begin{equation}\label{rellogtransf}
\Psi_{Z(p,q,r)/T, \s} = p\cdot \Psi_{Z(1,0,0)/T, \s'} + q\cdot \Psi_{Z(0,1,0)/T, \s''} + r\cdot \Psi_{Z(0,0,1)/T, \s'''},
\end{equation}
where in each term $T$ refers to the core torus in $T\times D^2$ considered as lying in the corresponding 4-manifold. Here $\s'$, $\s''$, and $\s'''$ are any \spinc structures on the relevant manifolds whose restriction to $T\times D^2$ is trivial and whose restriction to $W$ agrees with $\s|_W$.
\end{theorem}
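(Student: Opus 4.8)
The plan is to reduce the statement to a computation inside the closed piece $T^2 \times D^2$ together with the functoriality of the twisted cobordism maps. Observe first that the $T$-averaged relative invariant $\Psi_{Z(p,q,r)/T,\s}$ is, by definition \eqref{avgdef}, the composition $\ul{F}^-_{W,\s} \circ F^-_{T^2\times D^2,\s}(\Theta^-)$, and crucially the cobordism $W = Z \setminus nbd(T)$ — together with the coefficient module $\zee[K_+(W)]$, the \spinc structure $\s|_W$, and the induced map $\ul{F}^-_{W,\s}\colon HF^-(T^3;\zee) \to HF^-(Y;\zee[K_+(W)])$ — is literally unchanged by the logarithmic transformation, since that operation only alters the filling $T^2 \times D^2$. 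Therefore the entire dependence on $(p,q,r)$ is concentrated in the element
\[
\zeta(p,q,r) := F^-_{T^2\times D^2,\s_0}(\Theta^-) \in HF^-(T^3;\zee),
\]
where we write $\s_0$ for the \spinc structure on the filling (trivial on $T^2 \times D^2$ by hypothesis, and this is preserved for all of $\s', \s'', \s'''$ by the normalization in the statement). So it suffices to prove the purely three-dimensional identity $\zeta(p,q,r) = p\cdot\zeta(1,0,0) + q\cdot\zeta(0,1,0) + r\cdot\zeta(0,0,1)$ in $HF^-(T^3;\zee)$ and then apply the ($\zee$-linear) homomorphism $\ul{F}^-_{W,\s}$ to both sides.

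To establish this identity I would use the structure of $HF^-(T^3;\zee)$ in the torsion \spinc structure, as computed by {\OnS} in \cite{OS3}: there is a distinguished summand on which $HF^-$ is identified with (a shift of) $\Lambda^* H^1(T^3;\zee)$ as a module over $\zee[U] \otimes \Lambda^* H_1(T^3)$, and the cobordism maps from $S^3$ into $T^3$ land in the part corresponding to $\Lambda^2 H^1 \cong H^1(T^3)$. The key point is that $F^-_{T^2\times D^2,\s_0}(\Theta^-)$ is determined by the homology class $[\phi(\partial D^2)] \in H_1(T^3)$ of the meridian of the surgery torus, because the relative \spinc structures on $T^2\times D^2$ that contribute (those restricting trivially to the core) are indexed precisely by this boundary data, and under the identification above the resulting element of $H^1(T^3)$ is the Poincar\'e dual of $(p,q,r)$. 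Linearity in $(p,q,r)$ is then immediate from linearity in $H^1(T^3)$. An alternative, perhaps cleaner, route is to decompose the surgery cobordism: the 4-manifold $T^2 \times D^2$ with the surgery $(p,q,r)$ can be built from $T^2\times D^2$ with a standard filling by attaching a single two-handle along a curve determined by $(p,q,r)$ on $T^3$, and the surgery exact triangle / handle-attachment formula in twisted coefficients gives the additive dependence directly; one reads off the coefficients $p,q,r$ from the intersection numbers of the attaching curve with the generators of $H_1(T^3)$.

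The main obstacle I anticipate is making the identification ``$F^-_{T^2\times D^2}(\Theta^-) \leftrightarrow$ PD of $(p,q,r)$ in $H^1(T^3)$'' precise and canonical, including keeping track of the \spinc structures and the overall sign/$H^1$-indeterminacy so that one genuinely obtains an equality (and not merely a proportionality) after fixing representatives. Concretely, one must verify that the contributions of the relevant relative \spinc structures on $T^2\times D^2$ assemble, under the cobordism map, into the class dual to the meridian rather than some other class, and that no torsion or lower-order terms in $\Lambda^* H^1(T^3)$ intrude — this is where the assumption that $\s$ restricts trivially to $T\times D^2$ is used, to kill the non-torsion \spinc structures on the filling. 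Since this computation takes place entirely in the closed, completely understood manifold $T^3$ and its standard fillings, I expect it to be routine once set up correctly, and the passage to the relative invariant of $Z(p,q,r)$ via the fixed map $\ul{F}^-_{W,\s}$ adds nothing further — this is exactly the ``exercise in the formal properties of Heegaard Floer theory'' flavor already flagged for Lemma \ref{Tavglemma}.
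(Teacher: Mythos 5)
Your overall strategy matches the paper's: isolate the dependence on $(p,q,r)$ inside the filling, identify $F^-_{T^2\times D^2}(\Theta^-)$ with a Poincar\'e dual in $H^2(T^3)\cong\Lambda^2 H^1(T^3)$, and exploit the resulting linearity before pushing through the fixed, $(p,q,r)$-independent map $\ul{F}^-_{W,\s}$. However, you have left precisely the load-bearing step as an ``anticipated obstacle'' rather than resolving it, and your formulation of $\zeta(p,q,r)$ is imprecise in a way that matters.

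On the framing: as written, $\zeta(p,q,r) := F^-_{T^2\times D^2,\s_0}(\Theta^-)$ does not depend on $(p,q,r)$ at all---it is one fixed element of $HF^-(T^3;\zee)$. The $(p,q,r)$ dependence enters entirely through the gluing diffeomorphism $\phi$, and if you leave $\phi$ implicit in ``the identification of the boundary'' you cannot even state the three-dimensional identity you want to prove without circularity (taken literally it would read $\zeta=(p+q+r)\zeta$). The clean fix, which is what the paper does, is to decompose $Z(p,q,r) = T\times D^2 \cup C_\phi \cup W$ with \emph{identity} gluings, where $C_\phi$ is the mapping cylinder of $\phi$ viewed as a cobordism $T^3\to T^3$. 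Then the composition law gives $\ul F^-_W\circ F^-_{C_\phi}\circ F^-_{T\times D^2}$, and the $(p,q,r)$-linearity becomes a statement about $F^-_{C_\phi}$ acting on a fixed class, which is where $H_1$-equivariance can be applied cleanly.

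On the identification itself: you assert that $F^-_{T^2\times D^2}(\Theta^-)$ corresponds to the Poincar\'e dual of the meridian $c=[pt\times\partial D^2]$, and you correctly observe that this is the heart of the argument, but you give no mechanism for pinning it down. The paper supplies two inputs you would need. First, $H_1$-equivariance of cobordism maps plus the fact that $c$ bounds in $T^2\times D^2$ forces $c\cdot F^-_{T^2\times D^2}(\Theta^-)=0$, which locates the image in the one-dimensional kernel of contraction with $c$ inside $\Lambda^2 H^1(T^3)$---i.e.\ the line spanned by $c^*$. Second, to know the image is a \emph{primitive} generator of that line (and not, say, $2c^*$ or $0$) one embeds $T^2\times D^2$ as a neighborhood of a regular fiber in an elliptic $K3$ and uses the composition law together with $\O_{K3}=1$. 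Your sketch gestures at ``the structure of $HF^-(T^3)$'' and the indexing of relative \spinc structures, but neither of those alone pins down the scalar; and your alternative route via a surgery exact triangle would still require a normalization input of the same flavor. So the gap is concrete: without the $H_1$-action argument and the $K3$ normalization, the central identification is an unjustified assertion.
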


\begin{proof}
Recall from \cite{OS4} that the untwisted Floer homology of the 3-torus $T^3$ in the trivial \spinc structure $\s_0$ is given as a module over $\zee[U]\otimes \Lambda^*H_1(T^3;\zee)$ by
\[
HF^-(T^3, \s_0;\zee) \cong \left(\Lambda^2H^1(T^3;\zee)\oplus \Lambda^1H^1(T^3;\zee)\right)\otimes \zee[U],
\]
where $\Lambda^2H^1(T^3)$ lies in absolute degree $-3/2$, $\Lambda^1H^1(T^3)$ is in degree $-5/2$, and as usual $U$ carries degree $-2$. The action of $H_1(T^3)$ is given by the obvious contraction $\Lambda^2H^1(T^3)\to \Lambda^1H^1(T^3)$ in degrees congruent to $-3/2$ modulo 2, and vanishes on $\Lambda^1H^1(T^3)$. We wish to calculate the image $F^-_{T^2\times D^2}(\Theta^-)$, which by the degree shift formula from \cite{OS3} (quoted in equation \eqref{degshift} below) lies in $HF^-_{-3/2}(T^3,\s_0) \cong \Lambda^2H^1(T^3)$.

Let $c\in H_1(T^3)$ be the class $c = [pt\times \partial D^2]$. Then it follows from the $H_1$-equivariance of cobordism-induced maps and the fact that $c$ bounds in $T^2\times D^2$ that $c.F^-_{T^2\times D^2}(\Theta^-) = 0$, i.e., $F^-_{T^2\times D^2}(\Theta^-)$ lies in the one-dimensional subspace of $\Lambda^2H^1(T^3)$ that is the kernel of contraction with $c$. 

It is easy to see that $F^-_{T\times D^2}(\Theta^-)$ must be a primitive class by, for example, embedding $T\times D^2$ as a neighborhood of a regular fiber in an elliptic $K3$ surface and using the composition law for cobordism-induced maps together with the fact that $\O_{K3} = 1$ (c.f. \cite{OSsymp}). Making the natural identification $\Lambda^2H^1(T^3) = H^2(T^3)$, the kernel of contraction by $c$ is generated by the Poincar\'e dual of $c$ itself. Hence we may identify $F^-_{T\times D^2}(\Theta^-)$ as the class Poincar\'e dual to the circle $pt\times \partial D^2$ (up to sign).  

To understand the effect of the gluing diffeomorphism $\phi$ on the $T$-averaged invariant, observe that we can decompose $Z_\phi$ as a composition $T\times D^2\cup C_\phi \cup W$ where the gluings here are identity maps, and $C_\phi$ is the mapping cylinder of $\phi$ thought of as a cobordism $T^3\to T^3$. It follows from the composition law that the homomorphism in untwisted Floer homology induced by $T\times D^2 \cup C_\phi$ is the composition of $F^-_{T\times D^2}$ with $F^-_{C_\phi}$. Write $c^*\in H^2(T^3)$ for the Poincar\'e dual of $c$; then using $H_1$-equivariance it is easy to identify $F^-_{C_\phi}(c^*)$ with $(\phi_*c)^*$. If $\phi$ corresponds to $(p,q,r)$ as in the statement, then the latter class has the expression $p\cdot c_{1,0,0}^* + q\cdot c_{0,1,0}^* + r\cdot c_{0,0,1}^*$, where $c_{1,0,0}^*$, $c_{0,1,0}^*$ and $c_{0,0,1}^*$ are Poincar\'e dual to $S^1\times pt \times pt$, $pt\times S^1\times pt$, and $pt\times pt\times S^1$ respectively (so in particular $c_{0,0,1} = c$). Using this in the definition \eqref{avgdef} gives
\begin{eqnarray*}
F^-_{Z(p,q,r)/T,\s}(\Theta^-) &=& \ul{F}^-_{W}(F^-_{C_\phi}(F^-_{T\times D^2}(\Theta^-))) = \ul{F}^-_{W}((\phi_*c)^*)\\
&=& p\cdot \ul{F}^-_{W}(c_{1,0,0}^*) + q\cdot \ul{F}^-_{W}(c_{0,1,0}^*) + r\cdot \ul{F}^-_{W}(c_{0,0,1}^*)\\
&=& p\cdot \ul{F}^-_{W}(F^-_{C_{1,0,0}}(F^-_{T\times D^2}(\Theta^-))) \\&&\quad {}+ q\cdot \ul{F}^-_{W}(F^-_{C_{0,1,0}}(F^-_{T\times D^2}(\Theta^-))) \\&&\quad {}+ r\cdot \ul{F}^-_{W}(F^-_{C_{0,0,1}}(F^-_{T\times D^2}(\Theta^-))),
\end{eqnarray*}
where $C_{1,0,0}$ etc. are the mapping cylinders of the diffeomorphisms corresponding to the vectors $(1,0,0)$ etc. This is the desired result. 
\end{proof}

\begin{cor}\label{logtransfcor} Let $X$ be a closed 4-manifold, $\eta\in H^2(X,\arr)$ a perturbing class, and $Y\subset X$ a cut for $X$ that is ``allowable'' for $\eta$ in the sense that either $\eta|_Y \neq 0$ or in the decomposition $X = Z_1\cup_Y Z_2$, we have $b^+(Z_1)\geq 1$ and $b^+(Z_2)\geq 1$. Suppose $T\subset Z_1$ is an embedded torus with trivial normal bundle, and assume $\eta = 0$ in a neighborhood of $T$. Let $X(p,q,r)$ denote the result of a generalized logarithmic transformation along $T$. Then there are choices for representatives of the perturbed \OS invariants of the manifolds involved such that
\begin{eqnarray}
\sum \O_{X(p,q,r),Y,\eta,\s} &=& p\cdot \sum \O_{X(1,0,0),Y,\eta',\s'} + q \cdot \sum \O_{X(0,1,0), Y, \eta'', \s''}\nonumber\\ && \hspace*{.5in} + r \cdot \sum \O_{X(0,0,1),Y,\eta''',\s'''}.\label{pertlogtransf}
\end{eqnarray}
Here we consider $Y$ to provide a cut for each of the transformed 4-manifolds above, and choose the data $\s',\s'',\s'''$ and $\eta',\eta'',\eta'''$ to be compatible with the restrictions of $\s$ and $\eta$ away from $T$. In each case the sum is over \spinc structures that agree with $\s$, $\s'$, $\s''$ or $\s'''$ in a neighborhood of $T$ and in the complement of that neighborhood.
\end{cor}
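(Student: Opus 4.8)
The plan is to deduce this from Theorem~\ref{logtransfthm} by pairing the relative invariant of $Z_1(p,q,r)$ across the cut $Y$ with the relative invariant of $Z_2$, via the definition of the (perturbed) closed invariant.

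First I would note that, since $T$ lies in the interior of $Z_1$ and the transformation is supported in a tubular neighborhood of $T$, we have $X(p,q,r) = Z_1(p,q,r)\cup_Y Z_2$ with $Y$, $Z_2$, and the gluing map unchanged, and likewise for the elementary transformations $(1,0,0)$, $(0,1,0)$, $(0,0,1)$; in all four cases the complement $W = Z_1\setminus nbd(T)$ of the transformation region is literally the same 4-manifold. Because $Y$ is allowable for $\eta$ and $\eta$ vanishes near $T$, the (perturbed) pairing \eqref{pairingdef} is defined on the relevant reduced/perturbed Floer homologies, and the construction \eqref{Odef} computes $\sum\O_{X(p,q,r),Y,\eta,\s}$ by pairing $\tau^{-1}$ of a sum of relative invariants of $Z_1(p,q,r)$ against the relative invariant $\Psi_{Z_2,\s_2,\eta_2}$ of $Z_2$. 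The \spinc structures occurring in the sum on the left of \eqref{pertlogtransf} are exactly those whose restriction to $Z_1(p,q,r)$ agrees with a fixed one on both $T\times D^2$ and $W$ (and whose restriction to $Z_2$ is fixed), so Lemma~\ref{Tavglemma} identifies the corresponding sum of relative invariants, after applying the collapse $p_*$ on the $Z_1(p,q,r)$-factor of the module identification $\zee[K(X,Y)]\cong \zee[K(Z_1(p,q,r))]\otimes_{R_Y}\zee[K(Z_2)]$, with the $T$-averaged relative invariant $\Psi_{Z_1(p,q,r)/T,\s}\in HF^-(Y,\s;\zee[K_+(W)])$. Hence $p_*\,\sum\O_{X(p,q,r),Y,\eta,\s}$ equals $\langle \tau^{-1}\Psi_{Z_1(p,q,r)/T,\s},\,\Psi_{Z_2,\s_2,\eta_2}\rangle$ up to sign and the action of $H^1(Y)$, and the analogous formula holds for each of $(1,0,0)$, $(0,1,0)$, $(0,0,1)$ with the \emph{same} second factor.

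It then remains to substitute Theorem~\ref{logtransfthm}. The key point making this legitimate is that $\Psi_{Z_1(1,0,0)/T}$, $\Psi_{Z_1(0,1,0)/T}$, $\Psi_{Z_1(0,0,1)/T}$ all lie in the single group $HF^-(Y,\s;\zee[K_+(W)])$ (the complement $W$ being unchanged), so the integer linear combination in \eqref{rellogtransf} makes sense there; since the pairing \eqref{pairingdef} is additive in its first slot, the scalars $p,q,r$ are integers (for which the antilinearity in the second slot is vacuous), and $\tau^{-1}$ and $p_*$ are all natural, pairing with $\Psi_{Z_2,\s_2,\eta_2}$ distributes over \eqref{rellogtransf} and yields \eqref{pertlogtransf}, read, if one wishes to be careful, as an identity in the common module $\zee[K_+(W)]\otimes_{R_Y}\zee[K(Z_2)]$.

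The main obstacle is bookkeeping rather than geometry: one must verify that the set of \spinc structures described in the statement is exactly the one to which Lemma~\ref{Tavglemma} applies, and track the coefficient modules carefully through $p_*$ and the pairing using the Mayer--Vietoris identification of \cite{products}. Two minor points: the hypothesis that $\eta=0$ near $T$ is used to ensure the maps on $T^2\times D^2$ and on the mapping cylinder $C_\phi$ entering the definition of the $T$-averaged invariant and the proof of Theorem~\ref{logtransfthm} are the honest untwisted ones; and, as in that proof, the cobordism maps and relative invariants are only defined up to an overall sign, so \eqref{pertlogtransf} is an equality of suitably chosen representatives --- over $\zee$ this is the mod~$2$ limitation noted in the introduction.
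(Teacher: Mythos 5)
Your proposal is correct and follows exactly the route the paper takes: the paper's proof is the single sentence ``pair both sides of \eqref{rellogtransf} with the relative invariant $\Psi_{Z_2,\eta,\s}$,'' and your argument spells out precisely the bookkeeping that sentence elides (the identification of the summed closed invariant with the pairing against the $T$-averaged relative invariant via Lemma~\ref{Tavglemma}, and the additivity of the pairing so that the integer linear combination distributes). The only minor quibble is your closing remark about a ``mod~2 limitation'': this corollary is stated over $\zee$ with the sign indeterminacy absorbed into ``choices of representatives,'' and the mod~2 reduction enters only later, in Section~\ref{knotsurgsec}.
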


\begin{proof} This follows from the preceding theorem and Lemma \ref{Tavglemma} by pairing both sides of \eqref{rellogtransf} with the relative invariant $\Psi_{Z_2,\eta,\s}$.
\end{proof}

Observe that, if desired, we can obtain a formula analogous to \eqref{rellogtransf} using the untwisted relative invariants $\Psi^{un}$ by applying the map induced by the augmentation homomorphism $\varepsilon: \zee[K_+(W)]\to \zee$ to both sides of \eqref{rellogtransf}. Likewise, by choosing a cut $Y$ for a closed 4-manifold $X$ that is disjoint from the torus on which surgery is being performed, we can obtain a formula for the (individual) \OS invariants of the result $X(p,q,r)$ of a logarithmic transformation on a closed 4-manifold with given \spinc structure $\s_0$:
\[
\sum_\s\Phi_{X(p,q,r),\s} = p\sum_{\s'} \Phi_{X(1,0,0),\s'} + q \sum_{\s''}\Phi_{X(0,1,0),\s''} + r \sum_{\s'''}\Phi_{X(0,0,1),\s'''},
\]
where in each case the sum is over \spinc structures that are trivial over the torus and agree with $\s_0$ away from the surgery region. This formula is in close analogy with a similar one for Seiberg-Witten invariants obtained by Morgan, Mrowka, and Szab\'o \cite{MMS}, but is rather weaker in light of the sign indeterminacy of the invariants $\Phi_X$.
 
In principle, the above follows by equating coefficients on the two sides of \eqref{pertlogtransf}---recall that $\O_{X,Y,\eta}$ generally takes values in a group ring---but there are difficulties with this related to the choice of representatives involved. We skirt that issue by requiring $Y$ to be an ``admissible cut'' in the sense that $H^1(Y)\to H^2(X\setminus T, \partial)$ is trivial (c.f. \cite{OS3}); in this case the group ring involved is simpy $\zee$ and the ambiguity arising from the action of $H^1(Y)$ is eliminated (though the sign issue persists).

\subsection{Fiber sums}
We briefly recall the results we need from \cite{products} regarding fiber sums of 4-manifolds along tori. In contrast with the previous subsection, here we will be interested mainly in tori that are homologically essential: specifically, tori that represent primitive homology classes of infinite order. Let $M$ be a closed 4-manifold and $T\subset M$ an essential torus of self-intersection 0 and infinite order in homology. In this situation the perturbed theory applies, since we can always choose a class $\eta\in H^2(M;\arr)$ pairing nontrivially with the torus that we can use as our perturbation. The formalism of \cite{products} implies that when $b^+(M)\geq 2$, the \OS invariants $\Phi_{M,\s}$ can be calculated as the coefficients of $\O_{M,T^3,\eta}\in \K(M,T^3,\eta)$, where $T^3$ denotes the boundary of a regular neighborhood of the torus $T\subset M$. Here $\O_{M,T^3,\eta}$ is defined by cutting $M$ along $T^3$ and using the pairing in perturbed Floer homology as in \eqref{Odef}, while $\K(M,T^3,\eta)$ is the Novikov completion of $\zee[K(M,T^3)]$ with respect to $\eta$. It is straightforward to see that $K(M,T^3)$ is infinite cyclic and generated by the Poincar\'e dual of the torus $T$, so that $\K(M,T^3,\eta)$ is the ring of Laurent series in a single variable $t$.

If $X = M_1\#_{T_1 = T_2} M_2$ is a fiber sum of two manifolds $M_i$ as in the previous paragraph, then in general $K(X, T^3)$ need not be cyclic: the dual of the class $[T]$ produced by the identification of $T_1$ and $T_2$ generates an infinite cyclic summand, but $K(X,T^3)$ also includes the duals of ``rim tori,'' which are tori in $T^3$ that are nullhomologous in each $M_i$ but essential in $X$. If $\R\subset K(X,T^3)$ is the subgroup generated by the duals of rim tori, there is a natural projection $\rho: K(X,T^3)\to K(X,T^3)/\R$, and we write $\rho$ also for the extension to group rings and Novikov rings. (Here we choose $\eta$ to be compatible with the restrictions of perturbing classes $\eta_i$ on each $M_i$, so in particular it vanishes on rim tori. In this case the extension of $\rho$ to the Novikov ring always exists.) 

The fiber sum formula obtained in \cite{products} reads as follows.

\begin{theorem} Let $X = M_1\#_{T_1 = T_2} M_2$ be the fiber sum of two 4-manifolds $M_1$, $M_2$ along essential tori $T_1$, $T_2$ of square 0, and suppose $\eta\in H^2(X;\arr)$, and $\eta_i\in H^2(M_i;\arr)$ are classes restricting compatibly to the complements of the tori. Furthermore, suppose $\int_T\eta >0$. Then for any \spinc structure $\s$ on $X$,
\[
\rho(\O_{X,T^3, \s,\eta}) = (t^{1/2} - t^{-1/2})^2 \O_{M_1,T^3,\s_1,\eta_1}\cdot \O_{M_2,T^3,\s_2,\eta_2}.
\]
Here $\s_1$, $\s_2$ are \spinc structures on $M_1$, $M_2$ whose restrictions to $M_i\setminus nbd(T_i)$ agree with those of $\s$. The above formula holds up to multiplication by $\pm t^n$, where $t\in K(X, T^3)$ is the class Poincar\'e dual to the fiber sum torus $T_1 = T_2$.
\end{theorem}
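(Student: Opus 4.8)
We recall the shape of the argument. The plan is to concentrate all of the non-trivial topology of the fiber sum near a single copy of $T^3$ and then reduce the formula to a computation in the perturbed Floer homology of the three-torus. Write $N_i = M_i\setminus\mathrm{nbd}(T_i)$, a 4-manifold with boundary $T^3$, so that $X = N_1\cup_\psi N_2$ for the fiber-sum gluing diffeomorphism $\psi$ of $T^3$ (which identifies $T_1$ with $T_2$ and reverses the meridian directions). Pushing a parallel copy $T$ of the gluing torus slightly into a collar of $\partial N_1$, one obtains a diffeomorphism
\[
X\setminus\mathrm{nbd}(T)\ \cong\ N_1\ \cup_{T^3}\ (T^2\times P)\ \cup_{T^3,\,\psi}\ N_2,
\]
where $P$ is a pair of pants, viewed as a cobordism from $T^3\sqcup T^3$ to $T^3$: one incoming end is glued to $\partial N_1$, the other (via $\psi$) to $\partial N_2$, and the outgoing end is $\partial\,\mathrm{nbd}(T)$. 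The rim tori of $X$ are exactly the tori $\gamma\times(\text{core circle of }P)$ with $\gamma$ a loop on $T^2$, and the projection $\rho$ is designed precisely to quotient out the duals of these classes.

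First I would apply the perturbed version of the pairing \eqref{Odef} to the cut of $X$ along $T^3 = \partial\,\mathrm{nbd}(T)$ used to define $\O_{X,T^3,\s,\eta}$: the two sides are $\mathrm{nbd}(T) = T^2\times D^2$ and $X\setminus\mathrm{nbd}(T)$. Since $T^2\times D^2$ has $b^+ = 0$, the hypothesis $\int_T\eta>0$ is exactly what is needed to make this an allowable cut, and it realizes $\O_{X,T^3,\s,\eta}$ as $\langle\tau^{-1}(\Psi_{T^2\times D^2,\eta}),\ \Psi_{X\setminus\mathrm{nbd}(T),\,\s,\eta}\rangle$, up to a unit. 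By the composition law for cobordism-induced maps, $\Psi_{X\setminus\mathrm{nbd}(T),\s,\eta}$ is the image of $\Psi_{N_1,\s_1,\eta_1}\otimes\psi_*\Psi_{N_2,\s_2,\eta_2}$ under the map $\ul F^-_{T^2\times P}$, i.e. under the pants product $HF^-(T^3)\otimes HF^-(T^3)\to HF^-(T^3)$ on perturbed Floer homology. Thus the whole invariant is re-expressed as a triple product over $T^3$ of $\Psi_{N_1}$, $\Psi_{N_2}$, and the class $F^-_{T^2\times D^2}(\Theta^-)$ coming from the neighborhood of $T$.

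The core of the argument is then threefold. (a) Identify $HF^\pm(T^3,\s_0;\R_{T^3,\eta})$ as a module over the Novikov ring: since $\eta$ pairs non-trivially with $T$, it kills the degenerate part of the untwisted answer $(\Lambda^2H^1(T^3)\oplus\Lambda^1H^1(T^3))\otimes\zee[U]$, leaving a free rank-one module with generator in a single degree. (b) Identify $F^-_{T^2\times D^2}(\Theta^-)$ with this generator, exactly as in the proof of Theorem \ref{logtransfthm} (via the $K3$ embedding and $\O_{K3} = 1$). (c) Compute the pair-of-pants triple product of three such generators --- this is where the factor $(t^{1/2}-t^{-1/2})^2$ arises --- by a direct holomorphic-triangle count on the standard genus-one Heegaard triple for $T^2\times P$, in the spirit of \OnS's computations for $T^3$; as a consistency check, the resulting formula applied to $E(2) = E(1)\#_T E(1) = K3$ recovers $\O_{K3}=1$ together with the known relative invariant $\pm t^m(t^{1/2}-t^{-1/2})^{-1}$ of the complement of a fiber in $E(1)$. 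Assembling (a)--(c), pairing the outcome with $\Psi_{N_2}$, and applying $\rho$ to collapse the rim-tori directions and carry all of the individual variables $t_i$ to the single fiber-sum variable $t$, yields $\rho(\O_{X,T^3,\s,\eta}) = (t^{1/2}-t^{-1/2})^2\,\O_{M_1,T^3,\s_1,\eta_1}\,\O_{M_2,T^3,\s_2,\eta_2}$; the residual $\pm t^n$ ambiguity reflects the sign indeterminacy of the relative invariants and the meridional-framing freedom in the definition of $t$.

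The hard part will be step (c) together with its bookkeeping: pinning down the pants product on $HF^-(T^3)$ finely enough that the genuine factor $(t^{1/2}-t^{-1/2})^2$ --- and not merely some unit multiple of it --- appears, while simultaneously checking that $\ul F^-_{T^2\times P}$ kills the rim-tori subgroup $\R\subset K(X,T^3)$ in exactly the way needed for $\rho(\O_{X,T^3,\s,\eta})$ to land in the Laurent-series ring in the single variable $t$, with the two surviving factors being honestly $\O_{M_1,T^3,\s_1,\eta_1}$ and $\O_{M_2,T^3,\s_2,\eta_2}$. A secondary technical obstacle is to arrange the perturbing classes $\eta,\eta_1,\eta_2$ compatibly across the decomposition with $\int_T\eta>0$, so that all the Novikov completions involved remain flat over the relevant group rings and the gluing and pairing theorems of \cite{products} apply verbatim.
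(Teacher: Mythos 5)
This theorem is not proved in the present paper: the text introduces it with ``The fiber sum formula obtained in \cite{products} reads as follows,'' and simply records the statement without an argument. There is therefore no internal proof to compare your proposal against; the actual proof lives in \cite{products}.

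Judged on its own, your sketch sets up a reasonable decomposition --- pushing $T$ into a collar to write $X\setminus nbd(T)\cong N_1\cup(T^2\times P)\cup N_2$ --- and correctly identifies the two essential ingredients, namely the module structure of the perturbed Floer homology of $T^3$ and the relative invariant of $T^2\times D^2$. But the entire content of the theorem sits in your step~(c), which you yourself flag as ``the hard part'': the holomorphic-triangle computation of the pants product on $T^2\times P$, the precise emergence of the factor $(t^{1/2}-t^{-1/2})^2$, the verification that $\ul{F}^-_{T^2\times P}$ annihilates the rim-tori directions in exactly the way the projection $\rho$ demands, and the identification of the two surviving factors with $\O_{M_1,T^3,\s_1,\eta_1}$ and $\O_{M_2,T^3,\s_2,\eta_2}$. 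As written these are announced rather than carried out, so the proposal is an outline whose decisive step remains open. One secondary point: the worry that the pants product might only pin down $(t^{1/2}-t^{-1/2})^2$ up to a unit is largely moot, since the conclusion is stated only up to $\pm t^n$; the genuine danger is that a loose argument could produce a \emph{different} non-unit (for instance a higher power of the same expression), and it is precisely this that a completed version of step~(c) would have to exclude.
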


\section{Knot Surgery Formulae}\label{knotsurgsec}

We are now in a position to derive analogs for \OS invariants of the well-known formula for the Seiberg-Witten invariants of the result of knot surgery. Here and below, we cannot avoid the sign issues that surfaced previously, and we are obliged to replace $\zee$ by the field $\eff = \zee/2\zee$ of two elements. That is to say, we run the entire preceding package with $\eff[H^1(Y;\zee)]$ replacing $\zee[H^1(Y;\zee)]$, or put another way we form the tensor product over $\zee$ of all our Floer groups with $\eff$. Thus the \OS invariants constructed this way are the mod-2 reductions of the (sign-indeterminate) ones considered above. To emphasize this point, we will use the notation $[\O_{X,Y,\s}]$ for the image of $\O_{X,Y,\s}\in \zee[K(X,Y)]$ in $\eff[K(X,Y)]$. 

 We begin with the case of closed 4-manifolds.

\begin{theorem}\label{closedknotsurg} Let $T\subset X$ be a smoothly embedded torus with trivial normal bundle in a 4-manifold $X$, and suppose there is a class $\eta\in H^2(X;\arr)$ such that $\int_T\eta >0$. On the result $X_K$ of knot surgery along $T$ using $K\subset S^3$, let $\eta_K$ be any 2-dimensional cohomology class whose restriction to the complement of the surgery region agrees with the restriction of $\eta$. Then the perturbed \OS invariants of $X$ and $X_K$ relative to the classes $\eta$, $\eta_K$ and the boundary $T^3$ of the neighborhood of $T$ satisfy
\[
[\O_{X_K, T^3,\s_K,\eta_K}] = [\O_{X,T^3,\s,\eta}\cdot \Delta_K(t)]
\]
up to multiplication by $\pm t^n$. Here $\s$, $\s_K$ are any \spinc structures on $X$, $X_K$ restricting compatibly to the complement of the surgery neighborhood, and $\Delta_K(t)$ is the Alexander polynomial of $K$.
\end{theorem}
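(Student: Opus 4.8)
The plan is to emulate Fintushel and Stern's proof \cite{FSknot} of the corresponding Seiberg--Witten formula, with the \OS fiber sum formula of \cite{products} and Corollary \ref{logtransfcor} taking the places of, respectively, the gluing/Meng--Taubes input and the Morgan--Mrowka--Szab\'o logarithmic transformation formula. One works with $\eff = \zee/2\zee$ coefficients from the outset, so that the sign indeterminacy is absent and the bracketed statement is exactly what is produced.

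It is convenient to prove the formula for all \emph{links} in $S^3$ (with $\Delta$ the one-variable, Conway-normalized Alexander polynomial and $t$ dual to the fiber sum torus obtained from a chosen meridian) and then specialize to knots. First I would record the standard fact that knot surgery is a fiber sum: there is a diffeomorphism $X_K \cong X\#_{T = S^1\times\mu}(S^1\times M_K)$, where $M_K$ is zero-surgery on (all components of) $K$ and $\mu$ the chosen meridian, with $T$ and its boundary $T^3 = \partial(nbd\,T)$ identified accordingly. The fiber sum formula of \cite{products} then expresses $[\O_{X_K,T^3,\s_K,\eta_K}]$, up to a monomial and up to the projection $\rho$ killing the Poincar\'e duals of rim tori, as $(t^{1/2}-t^{-1/2})^2\,[\O_{X,T^3,\s,\eta}]\cdot[\O_{S^1\times M_K,T^3}]$; a homological computation identifies the rim tori for knot surgery (they are multiples of $S^1\times\lambda_K$, with $\lambda_K$ the Seifert longitude, on which $\eta_K$ vanishes by construction) and shows that $\rho$ may be suppressed. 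The theorem is thereby reduced to the identity, depending only on $K$, that $(t^{1/2}-t^{-1/2})^2\,[\O_{S^1\times M_K,T^3}] = [\Delta_K(t)]$ up to a monomial.

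This identity I would prove by induction on the crossing number of a diagram, for links as well as knots. For the unknot, $M_K = S^1\times S^2$ and $S^1\times M_K = T^2\times S^2$; since knot surgery along the unknot returns $X$, the displayed reduction (applied to any $X$ with $[\O_X]\neq 0$ and dividing in the field $\eff((t))$) forces $(t^{1/2}-t^{-1/2})^2[\O_{T^2\times S^2,T^3}] = 1 = [\Delta_{\text{unknot}}]$. For a split link $M_K$ is a connected sum, so $S^1\times M_K$ decomposes along a copy of $S^1\times S^2$ and $[\O_{S^1\times M_K,T^3}] = 0 = [\Delta_K]$. For the inductive step, fix a crossing realizing a skein triple $(K_+,K_-,K_0)$; following \cite{FSknot}, relate the three building blocks $S^1\times M_{K_+}$, $S^1\times M_{K_-}$ and $S^1\times M_{K_0}$ (the last built from the resolution link) by logarithmic transformations along a torus $S^1\times\gamma$ with $\gamma$ a circle attached to the crossing, and apply Corollary \ref{logtransfcor} (or its relative form, Theorem \ref{logtransfthm}, paired through the fiber sum) to obtain a linear relation among $[\O_{S^1\times M_{K_\pm}}]$ and $[\O_{S^1\times M_{K_0}}]$. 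Identifying the distinguished class $t$ in each of the three manifolds and accounting for the jump in $b_1$ at the resolution should convert this relation into precisely the Conway skein relation $\Delta_{K_+}-\Delta_{K_-} = (t^{1/2}-t^{-1/2})\Delta_{K_0}$; combined with the inductive hypothesis for $K_-$ and $K_0$ this gives the case of $K_+$.

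The main obstacle is the inductive step, and within it the homological bookkeeping rather than the Floer theory: one must write down the surgery picture of the skein triple, read off the correct logarithmic-transformation data, and above all track the variable $t$ (and the rim-tori directions) through three 4-manifolds that are not diffeomorphic and generally differ in $b_1$, so that the factor $t^{1/2}-t^{-1/2}$ is reproduced exactly and not merely up to an undetermined unit. The remaining ingredients — the fiber sum formula, Corollary \ref{logtransfcor}, and the elementary fact that the Conway relation together with the normalization $\Delta_{\text{unknot}} = 1$ determines $\Delta$ — enter only as black boxes. As elsewhere in the paper, the price of the whole approach is the passage to mod-$2$ coefficients, which is exactly why the conclusion is stated for the classes $[\,\cdot\,]$.
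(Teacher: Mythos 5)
Your proposal follows essentially the same route as the paper's proof: both emulate Fintushel--Stern's skein-theoretic argument, with the fiber sum formula of \cite{products} and Corollary \ref{logtransfcor} playing the roles of the Seiberg--Witten gluing and log-transform results, and both pass to $\eff$-coefficients to kill the sign ambiguity. The minor restructuring you propose---first isolating $[\O_{S^1\times M_K}]$ via the fiber sum formula, then inducting on that quantity---reverses the paper's order, which runs the skein argument on the closed manifolds $X_K$ directly and recovers the $S^1\times M_K$ computation as Corollary \ref{relinvtcor}. Either order works in principle, and your split-link observation is a reasonable extra base case (the paper sidesteps it by choosing a resolution tree with only one- and two-component links).

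The place where your sketch conceals a real subtlety is the inductive step. First, for a two-component link $L$ one must use $S^1\times s(L)$, where $s(L)$ is the sewn-up exterior with the framing normalized so that $b_1(s(L))=2$, \emph{not} $S^1\times M_L$ (zero-surgery on all components); otherwise the fiber sum picture you built in step one does not persist through the skein triple. Second, the paper does not produce the Conway relation in one stroke. Rather, it proves two separate identities depending on whether the resolved crossing lies within one component or joins two: $[\O_{X_{K_+}}]=[\O_{X_{K_-}}]+[\O_{X_{K_0}}]$ when $K_\pm$ are knots and $K_0$ is a two-component link, and $[\O_{X_{L_+}}]=[\O_{X_{L_-}}]+[\O_{X_{L_0}}\cdot(t^{1/2}-t^{-1/2})^2]$ when $L_\pm$ are two-component links and $L_0$ a knot. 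The first follows from Corollary \ref{logtransfcor} plus Hoste's identification of the surgered manifold with the sewn-up exterior of the resolution link. The second needs, in addition, the observation that the relevant $0$-log transform on $S^1\times s(L_-)$ realizes a fiber sum of $S^1\times M_{L_0}$ with $T^4$, whence the extra factor of $(t^{1/2}-t^{-1/2})^2$ from the fiber sum formula. Only after dividing through by the appropriate power of $(t^{1/2}-t^{-1/2})$ (distinguishing the one- and two-component normalizations of $\Theta$) do these two relations merge into the single Conway relation. Your phrase ``accounting for the jump in $b_1$ at the resolution'' points at this but does not do the work; this case split, the sewn-up exterior, and the $T^4$-fiber-sum identification are the genuine content of the inductive step and would need to be spelled out for the argument to close.
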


\begin{proof} With the log transform and fiber sum formulae in place, this proof is virtually identical to that given by Fintushel and Stern for the case of Seiberg-Witten invariants (for more details on this argument, the reader is referred to \cite{FSknot} or \cite{FSclay}). We reproduce the essentials of the argument here. Recall that the manifold $X_K$ can be described as a fiber sum: 
\[
X_K = X \#_{T = S^1\times m} (S^1\times M_K),
\]
where $M_K$ is the result of 0-surgery along $K\subset S^3$ and $m$ is a meridian for $K$. Thus $S^1\times m$ is an essential torus of square $0$ in $S^1\times M_K$.

 Following \cite{FSknot}, if $L\subset S^3$ is a link with two components we define $X_L$ to be the 4-manifold obtained by fiber sum of $X$ with $S^1\times  s(L)$, where $s(L)$ is the sewn-up exterior of $L$ with appropriate framing (i.e., $s(L)$ is the 3-manifold obtained by identifying the two boundary tori of $S^3\setminus nbd(L)$ using a framing uniquely determined by the condition that $b_1(s(L)) = 2$). The theorem will follow from two properties of $\O_X$: 
\begin{enumerate}
\item $[\O_{X_{K_+}}] = [\O_{X_{K_-}}] + [\O_{X_{K_0}}]$
\item $[\O_{X_{L_+}}] = [\O_{X_{L_-}}] + [\O_{X_{L_0}} \cdot(t^{1/2} - t^{-1/2})^2]$
\end{enumerate}
Here as usual, $K_+$ denotes some one-component knot in a resolution tree for $K$, $K_-$ is the result of changing a positive to a negative crossing in $K_+$, and $K_0$ is the two-component link resulting from resolving the crossing. Likewise $L_+$ and $L_-$ are two-component links differing by a crossing change between strands on different components, and $L_0$ is the knot resulting from resolving the crossing (there is always a resolution tree for $K$ containing only one- and two-component links).

The two relations above imply that if we define a formal series $\Theta_K(t)\in \eff[[t]][t^{-1}]$ by letting $\Theta_K = [\O_{X_K}]/[\O_{X}]$ if $K$ has one component, and $\Theta_L = (t^{1/2} - t^{-1/2})^{-1} [\O_{X_L}]/[\O_{X}]$ if $L$ has two components, then $\Theta_K$ satisfies the same skein relation and normalization as the symmetric Alexander polynomial and the theorem follows (c.f. \cite{FSknot} or \cite{FSclay}).

Property (1) above follows from the log transform formula given in Corollary \ref{logtransfcor}. Indeed, let $c$ denote a smooth unknot linking the relevant crossing of $K_-$, so that $c$ is nullhomologous in the complement of $K\subset S^3$ and such that $K_+$ can be realized as the result of $+1$ Dehn surgery along $c$ (c.f. Figure \ref{crossingfig}(a)).
\begin{figure}[t]
\includegraphics[width=3cm]{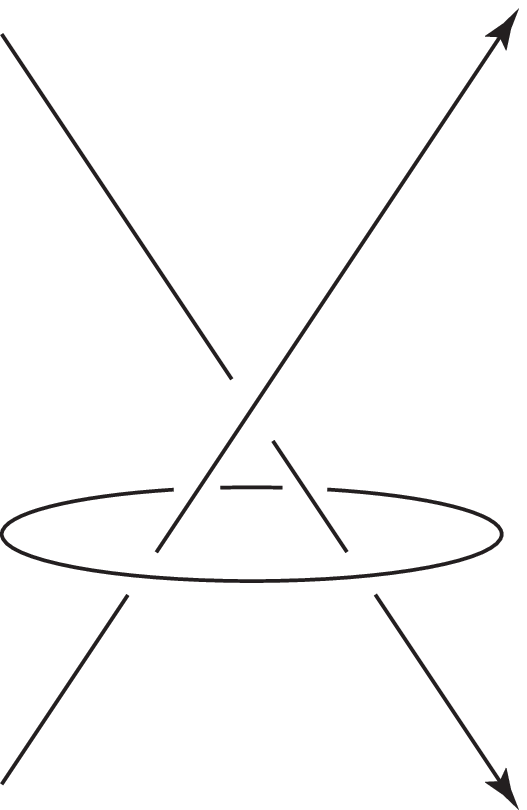}\hfill
\includegraphics[width=3cm]{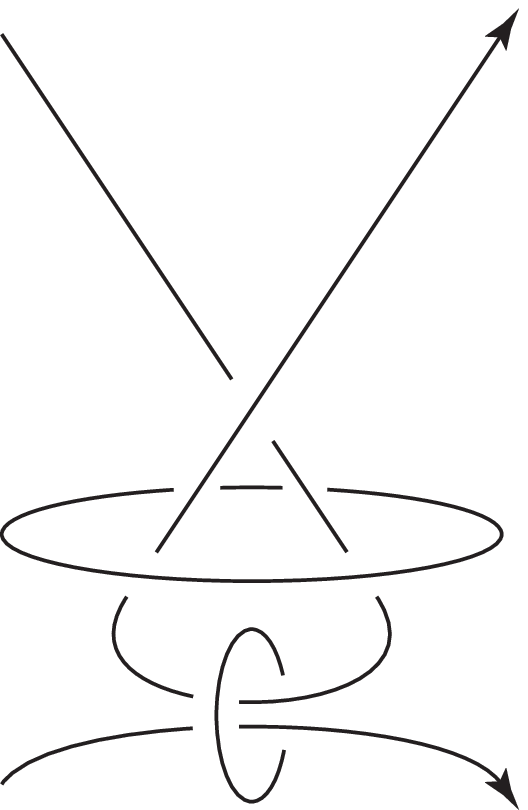}\hfill
\includegraphics[width=3cm]{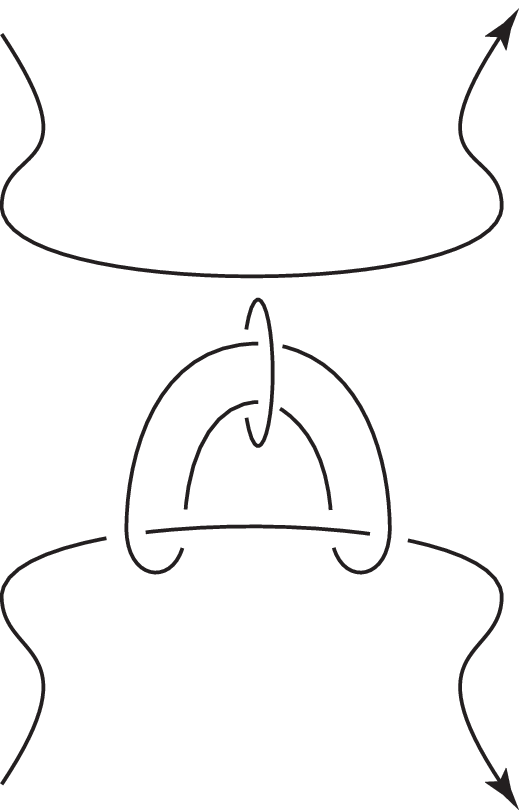}
\put(-285,55){$c$}
\put(-335,105){$K_-$}
\put(-150,55){$c$}
\put(-200,105){$L_0$}
\put(-185,-10){$U$}
\put(-73,105){$0$}
\put(-72,60){$0$}
\put(-45,50){$0$}
\put(-325,-30){(a)}
\put(-190,-30){(b)}
\put(-50,-30){(c)}
\caption{In (a), $+1$-surgery on $c$ yields the knot $K_+$. If the two strands in the picture are on different components of $L_-$, then $s(L_-)$ is given by $0$-surgery on $L_0$ and $U$ in diagram (b). After also performing $0$-surgery on $c$, we can slide $c$ down to encircle the ``waist'' of $U$, then bring the picture to (c) (passing strands of $L_0$ over $c$ as necessary).}\label{crossingfig} 
\end{figure}
Then $X_{K_+}$ is obtained from $X_{K_-}$ by a logarithmic transformation of the form $(p,q,r) = (0,1,1)$, along the torus $S^1\times c$ in the part of $X_{K_-}$ corresponding to $S^1\times M_{K_-}$. With these conventions, $X_{K_-}(0,0,1) = X_{K_-}$, while $X_{K_-}(0,1,0)$ is the fiber sum of $X$ with the result $Z_0$ of a $0$-log transform along $S^1\times c$. That is, $Z_0 = S^1\times M_{K_-}(0)$, where $M_{K_-}(0)$ is the result of 0-surgery along each component of the link $K_-\cup c$. According to Hoste \cite{hoste}, $M_{K_-}(0)$ is the sewn-up exterior of the link $K_0$, so we get property (1), modulo the sums appearing in corollary \ref{logtransfcor}. Since $T$ is nullhomologous in $X_{K_{\pm}}$ the sum corresponding to each of those manifolds contains only one term. On the other hand, there is a torus of square 0 in $X_{K_-}(0)$ intersecting the core torus $T$ in a single point, namely the punctured torus bounded by $c$ in $S^3\setminus K_-$ together with the surgery disk bounding $c$. The existence of this torus implies, by the adjunction inequality, that at most one of the terms in the sum corresponding to $X_{K_-}(0)$ in \eqref{logtransfcor} can be nonzero.

The second property is another application of the log transform formula together with the observation that if $c$ is the circle linking the crossing as above, then $0$-surgery along $c$ in $S^1\times s(L_-)$ gives the fiber sum of $S^1\times M_{L_0}$ with the 4-torus $T^4$. (The author is indebted to Peter Ozsv\'ath for pointing out this fact.) To see this, observe that by Hoste's result again, $s(L)$ is given by $0$-surgery on each component of the link $L_0\cup U$, where $U$ is an unknot linking the two strands of the resolved crossing as shown in Figure \ref{crossingfig}(b).
Performing $0$-surgery along $c$ in addition, we can rearrange the surgery diagram to obtain the manifold $Y$ shown Figure \ref{crossingfig}(c), which is easily seen to be the 3-manifold obtained from $M_{L_0}$ by removing a meridian of $L_0$ and replacing it with the complement of $pt\times pt \times S^1$ in $S^1\times S^1\times S^1 = T^3$. Crossing this picture with $S^1$, we see $S^1\times Y = S^1\times M_{L_0} \#_{f} T^4$ as claimed. Since $\O_{T^4} = 1$, property (2) follows from the fiber sum formula:
\begin{eqnarray*}
[\O_{X_{L_+}}] = [\O_{X_{L_-}(0,1,1)}] &=& [\O_{X_{L_-}(0,1,0)}] + [\O_{X_{L_-}(0,0,1)}] \\
&=& [\O_{X_{L_0}\#_f T^4}] + [\O_{X_{L_-}}] \\
&=& [\O_{X_{L_0}} \cdot (t^{1/2} - t^{-1/2})^2] + [\O_{X_{L_-}}].
\end{eqnarray*}
\end{proof}

\begin{cor}\label{relinvtcor} The 4-manifold $S^1\times M_K$ has perturbed mod-2 \OS invariant given by
\[
[\O_{S^1\times M_K,\s}] = \left[\frac{\Delta_K(t)}{(t^{1/2} - t^{-1/2})^2}\right]
\]
when calculated using the splitting  $S^1\times M_K = (S^1\times (S^3\setminus N(K))) \cup (S^1\times S^1\times D^2)$ and a form $\eta$ pairing positively with the torus $S^1\times m$, where $m$ is a meridian for $K$. Here $\s$ is any \spinc structure pairing trivially with $S^1\times m$; the invariant vanishes for other \spinc structures.

Furthermore, the perturbed relative invariant of $Z = S^1\times (S^3\setminus N(K))$ takes values in $\L(t)$ and is given by
\[
[\Psi_{Z,\s}] = \left[\frac{\Delta_K(t)}{t-1}\right]
\]
up to multiplication by $\pm t^n$.
\end{cor}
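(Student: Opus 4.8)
The plan is to obtain the formula for $\O_{S^1\times M_K}$ directly from Theorem~\ref{closedknotsurg}, and then to read off the relative invariant of $Z=S^1\times(S^3\setminus N(K))$ by a gluing argument across $\partial Z=T^3$. The key topological point is that $S^1\times M_K$ is itself the result of a knot surgery. Writing $M_{\text{unknot}}=S^1\times S^2$, the $4$-manifold $S^1\times M_{\text{unknot}}$ is diffeomorphic to $T^2\times S^2$, and under this diffeomorphism the torus $T_0=S^1\times(\text{meridian of the unknot})$ becomes a torus of the form $T^2\times\{pt\}$ with $pt\in S^2$. Since the meridian of the unknot is isotopic to the core of the surgery solid torus in $M_{\text{unknot}}$, one has $M_{\text{unknot}}\setminus N(\text{meridian})=S^3\setminus N(\text{unknot})$, so $(S^1\times M_{\text{unknot}})\setminus N(T_0)=S^1\times(\text{solid torus})$, and the Fintushel--Stern gluing---which sends the meridian of $T_0$ to the longitude $\lambda_K$---reassembles exactly $S^1\times M_K$. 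Applying Theorem~\ref{closedknotsurg} with $X=T^2\times S^2$, torus $T_0$, cut $T^3=\partial N(T_0)$, and a form pairing positively with $T_0$, we get
\[
[\O_{S^1\times M_K,\,T^3,\s_K,\eta_K}]=[\Delta_K(t)\cdot\O_{T^2\times S^2,\,T^3,\s,\eta}].
\]
Because $S^1\times m$ is essential of square $0$ and genus $1$ in $S^1\times M_K$, the adjunction inequality forces both sides to vanish unless $c_1(\s_K)$ pairs trivially with $S^1\times m$; so everything reduces to the base case $[\O_{T^2\times S^2,\,T^3,\s,\eta}]=[(t^{1/2}-t^{-1/2})^{-2}]$.

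For the base case I would invoke the fiber--sum formula. The manifold $T^2\times S^2$ is a unit for the fiber sum along a torus $T^2\times\{pt\}$, so $T^2\times S^2=(T^2\times S^2)\#_{T_0=T_0}(T^2\times S^2)$ with no essential rim tori in the decomposition; hence $\rho$ is the identity and the fiber--sum theorem reads $a=(t^{1/2}-t^{-1/2})^2\,a^2$ in the Novikov field $\L(t)$, where $a=\O_{T^2\times S^2,T^3,\s,\eta}$ is taken in the canonical \spinc structure $\s$ (which does pair trivially with $T_0$) and for $\eta$ the product symplectic form. Since $T^2\times S^2$ is symplectic, $a\neq0$---for instance because fiber--summing it into a closed symplectic manifold with $b^+\geq2$ produces a manifold with nonvanishing invariant by \cite{OSsymp}---and therefore $a=(t^{1/2}-t^{-1/2})^{-2}$, which proves the first assertion.

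For the relative invariant, set $N=N(S^1\times m)\cong T^2\times D^2$ and decompose $S^1\times M_K=Z\cup_{T^3}N$; pairing relative invariants across $T^3$ in the perturbed theory gives $\O_{S^1\times M_K,\,T^3,\s,\eta}=\langle\tau^{-1}\Psi_{Z,\eta},\Psi_{N,\eta}\rangle$. When $K$ is the unknot, $Z$ is itself $T^2\times D^2$, so this specializes to $a=\langle\tau^{-1}\Psi_{T^2\times D^2},\Psi_{N}\rangle$; in particular $\Psi_{T^2\times D^2}\neq0$ because $a\neq0$. The coefficient module $\zee[K(Z)]$ is canonically $\zee[t^{\pm1}]$ for every $K$ (with $t$ dual to the longitude direction), so $\Psi_Z$ and the unknot invariant $\Psi_{T^2\times D^2}$ both lie in $HF^-(T^3,\s_0;\L(t))$; granting that this group is, in the relevant degree, free of rank one over $\L(t)$, the nonzero element $\Psi_{T^2\times D^2}$ generates it, and we may write $\Psi_Z=\lambda\cdot\Psi_{T^2\times D^2}$ for a unique $\lambda\in\L(t)$. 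Linearity of the pairing in its first variable gives $\O_{S^1\times M_K}=\lambda\,a$, and comparison with the closed formula $\O_{S^1\times M_K}=\Delta_K\,a$ yields $\lambda=\Delta_K$, so $\Psi_Z=\Delta_K\cdot\Psi_{T^2\times D^2}$. The self--pairing $\langle\tau^{-1}\Psi_{T^2\times D^2},\Psi_{T^2\times D^2}\rangle=a$ then pins down $\Psi_{T^2\times D^2}$: up to units the pairing is a product, so $\Psi_{T^2\times D^2}\sim(t-1)^{-1}$ from $a=(t^{1/2}-t^{-1/2})^{-2}\sim(t-1)^{-2}$, and with the identification of $HF^-(T^3,\s_0;\L(t))$ with $\L(t)$ normalizing this to $1/(t-1)$ we obtain $[\Psi_{Z,\s}]=[\Delta_K(t)/(t-1)]$ up to units.

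I expect the main obstacle to be the structural input on the twisted Floer homology of $T^3$: that the relevant summand of $HF^-(T^3,\s_0)$ with the perturbed coefficients at hand is free of rank one over $\L(t)$, and the precise identification under which $\Psi_{T^2\times D^2}$ corresponds to $1/(t-1)$ (equivalently, the normalization of the reduced pairing on $T^3$), together with the nonvanishing $a\neq0$ for the $b^+=1$ manifold $T^2\times S^2$. The topological identifications---$S^1\times M_{\text{unknot}}\cong T^2\times S^2$ carrying the surgery torus to a $T^2\times\{pt\}$, and $T^2\times S^2$ being a fiber--sum unit---are standard, but should be checked against the Fintushel--Stern gluing convention, since that convention is exactly what makes the knot surgery description of $S^1\times M_K$ correct.
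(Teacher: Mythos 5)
Your strategy is sound but genuinely different from (and substantially harder than) the paper's. The paper proves the first statement in one line by taking $X = K3$ in the knot-surgery picture: $K3_K = K3 \#_T (S^1\times M_K)$, so the fiber-sum theorem of \cite{products} gives $[\Delta_K(t)] = [\O_{K3_K}] = [(t^{1/2}-t^{-1/2})^2\,\O_{K3}\cdot\O_{S^1\times M_K}]$, and $\O_{K3}=1$ finishes it. Because $b^+(K3)\geq 2$, no perturbed-theory subtleties arise and no separate nonvanishing argument is needed. For the second statement the paper simply cites the identity $\rho(\Psi_Z) = (t-1)\O_X$ from \cite{products} for $Z$ the complement of an essential square-zero torus, notes that $Z=S^1\times(S^3\setminus N(K))$ is a homology $T^2\times D^2$ so there are no rim tori and $\rho$ is the identity, and reads off $[\Psi_Z] = [(t-1)\Delta_K(t)/(t^{1/2}-t^{-1/2})^2] = [\Delta_K(t)/(t-1)]$ up to units.

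Your route instead realizes $S^1\times M_K$ as knot surgery on $T^2\times S^2$, reduces the first statement to evaluating $\O_{T^2\times S^2,T^3,\eta}$, and computes this by a ``fiber-sum unit'' self-gluing plus a nonvanishing argument. This is correct as a strategy, but it pushes the work onto exactly the points you flag as obstacles: $T^2\times S^2$ has $b^+=1$, so you need the perturbed theory throughout, you need a proof (not just a plausibility sketch) that the perturbed $\O_{T^2\times S^2,T^3,\eta}\neq 0$, and in the second half you need the rank-one structure of the relevant degree of $HF^-(T^3,\s_0;\L(t))$ together with the normalization that $\Psi_{T^2\times D^2}$ corresponds to $1/(t-1)$. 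None of these are free; the paper's choice of $K3$ and its citation of $\rho(\Psi_Z)=(t-1)\O_X$ are precisely designed to package all of this into already-established results. So: the proposal is logically viable, but if you pursue it you should be prepared to supply proofs of the three structural inputs you identify as obstacles, rather than treating them as lemmas-to-be-looked-up; the paper deliberately routes around them.
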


\begin{proof} Apply the fiber sum formula to the case of knot surgery on a fiber in an elliptic $K3$ surface. Here $\O_{K3} = 1$, so that
\[
[\Delta_K(t)] = [\O_{K3_K}] = [(t^{1/2} - t^{-1/2})^2 \O_{K3}]\cdot [\O_{S^1\times M_K}] = [(t^{1/2} - t^{-1/2})^2 \O_{S^1\times M_K}].
\]
This gives the first statment; for the second recall from \cite{products} that the relative invariant $\Psi_{Z}$ of the complement of an essential torus of square 0 in $X$ satisfies
\[
\rho(\Psi_Z) = (t-1) \O_X,
\]
where $\rho$ is the projection map $\K(Z)\to \L(t)$ that divides by the duals of rim tori. In the case at hand $Z$ is a homology $T^2\times D^2$, so there are no nontrivial rim tori and the statement follows.
\end{proof}

We can now give a relative version of the knot surgery formula for 4-manifolds with boundary. 

\begin{theorem}\label{knotsurgthm} Suppose $Z$ is a smooth 4-manifold with boundary $Y$, and assume that $T\subset Z$ is an embedded torus with the following properties:
\begin{itemize}
\item $[T]$ is of infinite order in $H_2(Z;\zee)$
\item $[T]$ lies in $Im(H_2(Y;\zee)\to H_2(Z;\zee))$
\end{itemize}
Fix a knot $K$ in $S^3$ and let $Z_K$ denote the result of a knot surgery operation applied to $Z$ along $T$. Let $\s$ be a \spinc structure on $Z$ that is trivial in a neighborhood of $T$, i.e., $\langle c_1(\s), T\rangle = 0$. Then when $b^+(Z)\geq 1$ the (mod-2) relative \OS invariant of $Z$ and $Z_K$ are related by 
\begin{equation}\label{relknotsurgform}
[\Psi_{Z_K,\s_K}] = [\Delta_K(t) \cdot \Psi_{Z,\s}],
\end{equation}
up to multiplication by a unit in $\eff[H^1(Y)]$. Here $\s_K$ is any \spinc structure on $Z_K$ that agrees with $\s$ away from the surgery region and has $\langle c_1(\s_K) , T\rangle = 0$. In the above, $t$ denotes the Poincar\'e dual of any class $[T_0]\in H_2(Y;\zee)$ such that $[T_0]$ maps to $[T]$ under $H_2(Y;\zee)\to H_2(Z;\zee)$. 

When $b^+(Z) = 0$, the equation above holds for the perturbed relative invariants, with respect to any perturbation $\eta$ with $\int_T\eta >0$.
\end{theorem}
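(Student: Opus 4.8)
The plan is to reduce the statement to the relative invariants, over the three--torus, of the two pieces exchanged by the surgery --- computed in Corollary~\ref{relinvtcor} --- together with the composition law for cobordism--induced maps. First I would fix a tubular neighbourhood $\nu(T)\cong T^2\times D^2$ of $T$ and set $W=Z\setminus\mathrm{int}\,\nu(T)$, regarded as a cobordism $T^3\to Y$ from $T^3=\partial\nu(T)$ to $Y$. Knot surgery replaces $\nu(T)$ by $\nu_K:=S^1\times E(K)$, $E(K)=S^3\setminus N(K)$, glued along the standard identification $\partial\nu_K\cong T^3\cong\partial\nu(T)$, so that $Z=W\cup_{T^3}(T^2\times D^2)$ and $Z_K=W\cup_{T^3}\nu_K$ differ only in the neighbourhood piece; since $E(K)$ is a homology $S^1\times D^2$, moreover $Z$ and $Z_K$ have canonically identified cohomology rel $Y$, so $K(Z)=K(Z_K)$ and both relative invariants lie in the same twisted group $HF^-(Y,\s|_Y;\eff[K(Z)])$ (or its Novikov completion). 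Placing the excised $4$--ball inside the neighbourhood piece and applying the composition law in twisted coefficients gives
\[
\Psi_{Z,\s}=\ul F^-_{W,\s}\!\bigl(\Psi_{T^2\times D^2,\s}\bigr),\qquad \Psi_{Z_K,\s_K}=\ul F^-_{W,\s}\!\bigl(\Psi_{\nu_K,\s}\bigr),
\]
where $\Psi_{T^2\times D^2,\s},\Psi_{\nu_K,\s}\in HF^-(T^3,\s_0;-)$ are the relative invariants of the neighbourhood pieces for the torsion \spinc structure $\s_0$ (the hypothesis $\langle c_1(\s),T\rangle=0$ forces $\s|_{T^3}=\s_0$, the only \spinc structure on $T^3$ with nontrivial Floer homology), and $\s$ denotes the common restriction $\s|_W=\s_K|_W$. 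Thus the two relative invariants of $Z$ and $Z_K$ are images, under one and the same homomorphism $\ul F^-_{W,\s}$, of the two relative invariants over $T^3$, and it suffices to compare the latter.

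Here Corollary~\ref{relinvtcor} does the work: its proof computes the relative invariant over $T^3$ of $S^1\times E(K)$ to be $[\Delta_K(t)/(t-1)]$, and applying the same computation to the unknot $U$ --- for which $E(U)=S^1\times D^2$, so $S^1\times E(U)=T^2\times D^2$ and $\Delta_U\equiv1$ --- gives $[\Psi_{T^2\times D^2,\s}]=[1/(t-1)]$. Hence $[\Psi_{\nu_K,\s}]=[\Delta_K(t)]\cdot[\Psi_{T^2\times D^2,\s}]$ in $HF^-(T^3,\s_0;-)$, where $t$ generates the twisting group $K(\nu_K)=K(T^2\times D^2)\cong\zee$. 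Pushing this identity through the module map $\ul F^-_{W,\s}$ and using linearity yields
\[
[\Psi_{Z_K,\s_K}]=[\Delta_K(t)]\cdot\ul F^-_{W,\s}\!\bigl([\Psi_{T^2\times D^2,\s}]\bigr)=[\Delta_K(t)\cdot\Psi_{Z,\s}],
\]
which is \eqref{relknotsurgform}, up to the action of $H^1(Y)$, i.e.\ up to a unit of $\eff[H^1(Y)]$.

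The step requiring care --- and where the two hypotheses on $[T]$ are spent --- is the identification implicit in the last display: that $\ul F^-_{W,\s}$ intertwines the $K(\nu_K)$--module structure on $HF^-(T^3)$ with multiplication by $t=\mathrm{PD}[T_0]$ on $HF^-(Y)$. Unwound, this asks that the twisted coefficient systems compose as expected, $\eff[K(\nu_K)]\otimes_{R_{T^3}}\eff[K(W)]\cong\eff[K(Z_K)]$ and likewise for $Z$, and that the generator of $K(\nu_K)\subset H^1(T^3)$ --- the Poincar\'e dual in $T^3$ of the surgery torus, which the knot--surgery gluing carries to the Poincar\'e dual of $T$ on both sides --- is the restriction of $\mathrm{PD}[T_0]\in H^1(Y)$. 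The assumption $[T]\in\mathrm{im}(H_2(Y;\zee)\to H_2(Z;\zee))$ supplies the class $[T_0]$ and makes $\mathrm{PD}[T_0]$ a genuine variable on $Y$ restricting correctly along $T^3$, while the infinite--order assumption guarantees this variable is non--torsion, so multiplication by $\Delta_K(t)$ is not trivialised by a relation in $\eff[H^1(Y)]$. I expect this homological bookkeeping, rather than any analytic input, to be the main obstacle; everything else is formal.

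Finally, the dichotomy in the statement. Corollary~\ref{relinvtcor} is inherently a statement of the $\eta$--perturbed theory --- note $\Delta_K(t)/(t-1)$ is never a Laurent polynomial, since $\Delta_K(1)=\pm1$ --- and already the relative invariant $[1/(t-1)]$ of $T^2\times D^2$ over $T^3$ lives only in the Novikov completion; so the argument is run throughout with $\K(W,\eta)$-- and $\L(t)$--coefficients, which one checks are compatible across the gluing along $T^3$. When $b^+(Z)\geq1$ the cobordism map $\ul F^-_W$ factors through reduced Floer homology, where the ordinary (group--ring--coefficient) relative invariants $\Psi_{Z,\s}$, $\Psi_{Z_K,\s_K}$ are defined and the perturbed invariants are their Novikov completions; equating completions then yields \eqref{relknotsurgform} for the ordinary twisted invariants, up to a unit of $\eff[H^1(Y)]$. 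When $b^+(Z)=0$ no such de--completion is available, and the conclusion is the perturbed one stated, valid for any $\eta$ with $\int_T\eta>0$ --- which is in any case exactly the perturbation hypothesis needed to invoke Corollary~\ref{relinvtcor}.
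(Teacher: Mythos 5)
Your argument is correct and follows essentially the same route as the paper: decompose $Z = W\cup_{T^3}(T^2\times D^2)$ and $Z_K = W\cup_{T^3}(S^1\times E(K))$, invoke Corollary~\ref{relinvtcor} to compute the relative invariants of the two neighbourhood pieces over $T^3$ (in the perturbed theory) and identify their ratio with $[\Delta_K(t)]$, then push through the common cobordism map $\ul F^-_{W,\s}$ using the composition law and $R_Y$-linearity, de-completing at the end when $b^+(Z)\ge 1$. The only difference is cosmetic --- you phrase the key computation as a comparison of relative invariants $\Psi_{\nu_K,\s}$ vs.\ $\Psi_{T^2\times D^2,\s}$ rather than of the induced maps $F^-_{N_K}$ vs.\ $F^-_{N_U}$, but these are the same statement --- and you spell out the coefficient-system bookkeeping and the identification of $t$ with $\mathrm{PD}[T_0]$ somewhat more explicitly than the paper does.
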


Observe that $[T]$ can be thought of as a class in $Z_K$ since there is a natural homology equivalence between $Z$ and $Z_K$. Also note that the multiplication in \eqref{relknotsurgform} is module multiplication between an element $\Delta_K(t) \in \zee[H^1(Y)]$ and the relative invariant $\Psi_{Z,\s}$ which lies in $HF^-(Y,\s; \zee[K(Z)])$. The class $t$ is well-defined up to elements of the image of $H^1(Z)\to H^1(Y)$, but since $K(Z) = H^1(Y)/H^1(Z)$, such elements act trivially on the Floer homology.

\begin{proof} The assumptions of the theorem imply that $T$ represents a nontrivial class in the second homology of both $Z$ and $Y$, with real coefficients. Therefore we can choose a class $\eta\in H^2(Z;\arr)$ pairing positively with $T$, and we fix such a perturbation. Let $N\subset Z$ denote a small tubular neighbhorhood of $T$, and consider the decomposition $Z = N\cup W$ where $W = Z\setminus N$ is a cobordism $T\times S^1\to Y$. There is an analogous decomposition of the surgered 4-manifold, $Z_K = N_K\cup W$, where $N_K = S^1 \times (S^3\setminus nbd(K))$. According to the composition law, we can arrange that
\[
\Psi_{Z_K, \s, \eta} = F^-_{Z_K,\s,\eta}(\Theta^-) = \Pi_*\circ F_{W,\s}^-\circ F^-_{N_K,\s,\eta}(\Theta^-),
\]
where $\Pi_*$ is the homomorphism induced by the coefficient change $\M(N_K)(W)\to \M(Z_K)$. It follows from Corollary \ref{relinvtcor} that $F^-_{N_K,\s,\eta} = \Delta_K(t)\cdot F^-_{N_U,\s,\eta}$ (after tensor product with $\eff$), where $U\subset S^3$ is the unknot (and we replace $\eta$ by a suitable class on $N_U = T^2\times D^2 = N$ without adjusting notation). Since $t$ is dual to a class on $Y$, multiplication by $\Delta_K(t)$ commutes with $F_{W,\s,\eta}^-$ (strictly speaking, $F_{W,\s,\eta}^-\,\Delta_K(t) = \Delta_K(t^{-1})\,F_{W,\s,\eta}^-$, but the Alexander polynomial is symmetric). Likewise $\Pi_*$ is a homomorphism of $\R_{Y,\eta}$-modules, so the above becomes
\[
\Psi_{Z_K, \s, \eta} = \Delta_K(t)\cdot (\Pi_*\circ F_{W,\s}^-\circ F_{N,\s,\eta}^- (\Theta^-)) = \Delta_K(t)\cdot \Psi_{Z,\s,\eta}.
\]
This gives the result for perturbed relative invariants, and the unperturbed case follows since when $b^+(Z)\geq 1$ the two theories agree.
\end{proof}

\section{Floer homology calculations}\label{proofsec}

We return now to the situation of the introduction. Our goal is to use the relative \OS invariants of the complements $Z = X\setminus nbd(\Sigma) $ and $Z_K = X\setminus nbd(\Sigma_K)$ to distinguish the diffeomorphism types of these two 4-manifolds, and therefore in particular the smooth isotopy classes of $\Sigma $ and $\Sigma_K$. From the preceding, we know that the relative invariants for $Z_K$ are related to those for $Z$ by multiplication by the Alexander polynomial of $K$. Thus to carry out our program we must address two points:
\begin{itemize}
\item Show that the relative invariant $\Psi_Z$ is nonzero, at least in certain \spinc structures.
\item Prove that multiplication by $\Delta_K(t)$ has a nontrivial effect on the relative invariant.
\end{itemize}
The second point requires an understanding of the twisted Floer homology group containing $\Psi_Z$, in particular its module structure over $R_Y$. 

In this section we first show that if $X$ is a symplectic 4-manifold with $b^+(X)\geq 2$ and $\Sigma \subset X$ is a symplectic surface whose self-intersection satisfies $|\Sigma.\Sigma|\leq 2g-2$, then the relative invariant of $Z$ in the canonical \spinc structure is a nonzero element of the Floer homology of the boundary. (In fact, the same is shown to be true for arbitrary symplectic manifolds $X$ when $\Sigma.\Sigma < 0$.) Second, we give a complete  calculation of the Floer homology of the boundary of $Z$ in the case that $|\Sigma.\Sigma|>2g-2$, that is, we calculate the twisted Floer homology of circle bundles of ``large'' degree. Two connect these two regimes, we argue in section \ref{blowupsec} below that the property ``the complement of $\Sigma$ has a nonvanishing relative invariant'' is preserved by blowing up, which allows us to pass from a surface of square $2-2g$---whose relative invariant is known to be nonzero---to a surface of square $1-2g$, where the Floer homology is understood.

\subsection{Relative invariants of symplectic surfaces}\label{relinvtsec}

\begin{theorem}\label{nonvanthm} Suppose $\Sigma\subset X$ is a smoothly embedded symplectic surface in a symplectic 4-manifold $X$ with $b^+(X)\geq 2$, and suppose that $n = [\Sigma].[\Sigma]$ satisfies $|n|\leq 2g-2$. Let $Z = X\setminus nbd(\Sigma)$, and let $\kk$ be the restriction of canonical \spinc structure on $X$ to $Z$. Then the relative invariant $\Psi_{Z,\kk}\in HF^-(Y; \zee[K(Z)])$ is nontrivial, where $Y = \partial Z$. In fact, the image $[\Psi_{Z,\kk}]\in HF^-(Y;\eff[K(Z)])$ is also nontrivial.

If $2-2g\leq [\Sigma].[\Sigma]< 0$, then the same is true regardless of the value of $b^+(X)$.
\end{theorem}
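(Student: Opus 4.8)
The final assertion treats a symplectic surface $\Sigma$ of negative square in an \emph{arbitrary} symplectic $X$ (note that $b^+(X)\geq 1$ automatically, since $[\omega_X]^2>0$). The plan is to cap $X$ off with a standard model so as to build a closed symplectic manifold $M$ with known, non-vanishing \OS invariant, and then recover the non-vanishing of $\Psi_{Z,\kk}$ from the pairing formula applied to a cut of $M$. Since $n<0$ and $n\geq 2-2g$, we have $1\leq -n\leq 2g-2$, so there is a K3 surface $E_0$ containing a smooth genus-$g$ complex curve $C_0$ (then $C_0.C_0=2g-2$ automatically, by adjunction on $E_0$); blowing up $E_0$ at $2g-2+n\geq 0$ points of $C_0$ produces a K\"ahler surface $E=E_0\#(2g-2+n)\cpbar$ with $b^+(E)=3$, together with the proper transform $\Sigma'\subset E$, a symplectic surface of genus $g$ with $\Sigma'.\Sigma'=-n$. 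Because the normal bundles of $\Sigma\subset X$ and $\Sigma'\subset E$ have opposite Euler numbers, Gompf's symplectic sum construction \cite{gompf} yields a symplectic $M=X\#_{\Sigma=\Sigma'}E$, and $b^+(M)\geq 2$: the piece $E'=E\setminus nbd(\Sigma')$ has $b^+(E')=b^+(E)-1=2$ (removing a neighborhood of the essential surface $\Sigma'$ of positive square lowers $b^+$ by exactly one), and a maximal positive subspace of its intersection form consists of classes vanishing on $\partial E'$ and hence pushes forward to a positive subspace of $H^2(M;\arr)$. By the symplectic non-vanishing theorem of {\OnS} \cite{OSsymp}, $\Phi_{M,\kk_M}=\pm 1$ for the canonical \spinc structure $\kk_M$; since $\Phi_{M,\kk_M}$ occurs as a coefficient of the Laurent polynomial $\O_{M,\kk_M}$, both $\O_{M,\kk_M}$ and its mod-$2$ reduction $[\O_{M,\kk_M}]$ are non-zero.

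Next I would cut $M$ along $Y=\partial\,nbd(\Sigma)=\partial\,nbd(\Sigma')$, the circle bundle of Euler number $|n|$ over the genus-$g$ surface, so that $M=Z\cup_Y E'$ with $Z=X\setminus nbd(\Sigma)$. Here $b^+(E')=2$ as above, and $b^+(Z)\geq b^+(X)\geq 1$ because $\Sigma.\Sigma<0$ makes $nbd(\Sigma)$ negative definite---equivalently, $[\omega_X]$ restricts trivially to the circle bundle $Y$, hence lifts to $H^2(Z,\partial Z;\arr)$, and the (negative) contribution of $nbd(\Sigma)$ to $[\omega_X]^2$ forces that lift to have positive square. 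Since Gompf's symplectic form on $M$ agrees with $\omega_X$ over $Z$ up to rescaling the $E$-side (which does not alter the canonical class), the restriction $\kk_M|_Z$ is exactly $\kk$; write $\kk'=\kk_M|_{E'}$. The gluing formula for \OS invariants (\cite{products}, Theorem 7.6; cf.\ \eqref{Odef}), applied to this cut (legitimate since $b^+(Z),b^+(E')\geq 1$), gives, under the identification $\zee[K(Z)]\otimes_{R_Y}\zee[K(E')]\cong\zee[K(M,Y)]$,
\[
\O_{M,\kk_M}=\langle\,\tau^{-1}(\Psi_{Z,\kk})\,,\ \Psi_{E',\kk'}\,\rangle,
\]
and likewise over $\eff$.

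As the left-hand side is non-zero, and remains so mod $2$, the pairings on the right are non-trivial, so in particular $\Psi_{Z,\kk}\neq 0$ in $HF^-(Y;\zee[K(Z)])$ and $[\Psi_{Z,\kk}]\neq 0$ in $HF^-(Y;\eff[K(Z)])$ (the latter of course also implying the former). The only step that genuinely uses the hypothesis $n\geq 2-2g$ is the existence of a genus-$g$ curve $\Sigma'$ of square $-n$ inside a K\"ahler surface with $p_g\geq 1$; were $-n>2g-2$, the adjunction inequality would obstruct such a $\Sigma'$, and that complementary range is handled instead by the circle-bundle calculation of \S\ref{calcsec} and the blow-up argument of \S\ref{blowupsec}. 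I expect the bulk of the work to be bookkeeping rather than difficulty: carefully checking the three $b^+$ inequalities (especially $b^+(M)\geq 2$, so that \cite{OSsymp} delivers a genuinely non-zero---in fact odd---invariant rather than merely a class that might vanish), and tracking the canonical \spinc structures and the twisted coefficient modules $K(Z)$, $K(E')$, $K(M,Y)$ through the symplectic sum so that the pairing formula applies verbatim.
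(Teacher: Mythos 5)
Your argument for the negative-square assertion is correct and is essentially the paper's own: the paper likewise builds a standard piece from an elliptic $K3$ (there, by symplectically smoothing a section plus $g$ fibers and then blowing up; your proper-transform-in-a-blown-up-$K3$ construction of $(E,\Sigma')$ with $\Sigma'.\Sigma'=-n$ is the same device), forms the Gompf sum $M=X\#_{\Sigma=\Sigma'}E$, invokes the non-vanishing of the canonical-class invariant of the symplectic manifold $M$ from \cite{OSsymp}, and concludes via the pairing formula \eqref{relinvtpair} with $b^+\geq 1$ on both sides of the cut $Y$; your verifications that $b^+(Z)\geq 1$ (because the square of $\Sigma$ is negative) and $b^+(E\setminus nbd(\Sigma'))\geq 1$, and that the $e^0$-coefficient $\Phi_{M,\kk_M}=\pm1$ survives reduction mod $2$, all match the intended argument.

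The genuine gap is that you have proved only the second sentence of the theorem. The statement also asserts non-vanishing of $\Psi_{Z,\kk}$ whenever $b^+(X)\geq 2$ and $|n|\leq 2g-2$, which includes $0\leq n\leq 2g-2$; your proposal never addresses nonnegative $n$ (the ``complementary range'' you defer to Sections \ref{calcsec}--\ref{blowupsec} is $-n>2g-2$, not $n\geq 0$). The paper treats $n>0$ separately and more directly: there $Y=\partial Z$ is an admissible cut with $b^+$ positive on both sides, so $\Phi_{X,\kk}=\pm1$ is itself computed by the pairing \eqref{splitform} of $\Psi_{Z,\kk}$ with the relative invariant of $nbd(\Sigma)$, and non-vanishing follows at once (with $n=0$ reachable via the perturbed theory, since a class $\eta$ restricting nontrivially to $Y$ exists exactly when $n=0$). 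Alternatively, your fiber-sum argument does extend to $0\leq n\leq 2g-2$ with only minor changes---$\Sigma'$ then has square $-n\leq 0$, $b^+(E\setminus nbd(\Sigma'))\geq 2$, and $b^+(Z)\geq b^+(X)-1\geq 1$ uses the hypothesis $b^+(X)\geq 2$---but you must say this (and check those inequalities, which are where the $b^+(X)\geq 2$ hypothesis enters), since as written the first assertion of the theorem is simply not proved.
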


Observe that by the adjunction inequality and the nontriviality of \OS invariants of symplectic 4-manifolds with $b^+\geq 2$, we automatically have $n \leq 2g-2$. The hypothesis in the first part of the theorem is therefore an assumption that the square of $\Sigma$ is not ``too negative.''

Note that when $b^+(X) = 1$, one can perform repeated blowup operations if necessary to arrange that $[\Sigma].[\Sigma]$ is negative.

\begin{proof} Recall that the \OS invariant in the canonical \spinc structure $\kk$ on a closed symplectic 4-manifold with $b^+\geq 2$ is equal to $\pm 1$, according to \cite{OSsymp}.

Suppose first that $n>0$ and $b^+(X)\geq 2$. Then $Y = \partial Z$ constitutes an admissible cut for $X$, i.e., the Mayer-Vietoris coboundary $\delta: H^1(Y)\to H^2(X)$ is trivial, and the value of $b^+$ is positive for each component of $X\setminus Y$. In this case (by definition), the \OS invariant $\Phi_{X,\kk}$ is given by
\begin{equation}\label{splitform}
\Phi_{X,\kk} = \langle \tau^{-1}(\Psi_{Z,\kk}), \Psi_{nbd(\Sigma),\kk}\rangle \in \zee,
\end{equation}
a pairing between relative invariants (which, according to \cite{products}, we may take to lie in twisted-coefficient Floer homology groups rather than $\zee$-coefficient Floer homology as in the original definition). Combined with the nonvanishing result of \OnS, we infer that $\Psi_{Z,\kk}\in HF^-(Y, \kk; \zee[K(Z)])$ is a nonzero element of the reduced submodule (and similarly after reducing coefficients modulo 2). 

For negative $n$ this argument does not quite go through: according to \cite{products}, equation \eqref{splitform} holds when $b^+$ is allowed to vanish on one side of $Y$ (as in the case of negative $n$), provided one can pass to perturbed Floer homology: we need a perturbing class $\eta\in H^2(X,\arr)$ restricting nontrivially to $Y$. However, it is easy to see that such a class does not exist if $n\neq 0$. 

We skirt this issue by appealing to gluing results that do not require the perturbed theory. Specifically, let $E$ be a closed symplectic 4-manifold with $b^+(E)\geq 2$, containing an embedded symplectic surface $\Sigma'$ of square $n' = -n >0$. For example, let $S$ be a symplectic smoothing of the union of a section and $g$ disjoint regular fibers in an elliptic $K3$ surface. Then $S$ has genus $g$ and self-intersection $2g-2$, and by blowing up points of $S$ if necessary  (and symplectically splicing $S$ to the exceptional curve) we can reduce the self-intersection to $n'$. (Here we use the hypothesis that $n\geq 2-2g$.) Now form the fiber sum $M = X\#_{\Sigma = \Sigma'} E$. According to Gompf \cite{gompf} and McCarthy-Wolfson \cite{mw}  this manifold has a natural symplectic structure. Furthermore, when $n<0$ we have that $b^+(X\setminus \Sigma) = b^+(X)\geq 1$, and therefore $b^+(M)\geq 2$. We infer that $M$ has nontrivial \OS invariant in the canonical \spinc structure. Since $b^+(X\setminus \Sigma)$ and $b^+(E\setminus \Sigma')$ are both at least 1, we do not need to appeal to the perturbed Floer theory to be able to say that 
\begin{equation}\label{relinvtpair}
\sum_{h\in \delta H^1(Y)} \Phi_{M, \kk + h} e^h = \langle \tau^{-1}(\Psi_{Z,\kk}), \Psi_{Z',\kk}\rangle \in \zee[K(M, Y)],
\end{equation}
where the sum on the left is over the image $K(M,Y)$ of the Mayer-Vietoris boundary $H^1(Y)\to H^2(M)$, and the \spinc structures $\kk$ on the right indicate the restrictions of the canonical \spinc structure on $M$ to $Z$ and $Z' = E\setminus nbd(\Sigma')$ (c.f. \cite{products}).  These restrictions are of course the canonical \spinc structures on the two pieces (by naturality of the symplectic fiber sum), so since the left hand side is nonzero we again infer that $\Psi_{Z,\kk}$ is nonzero in $HF^-(Y,\kk; \zee[K(Z)])$. 
\end{proof} 

\subsection{Twisted Floer homology for circle bundles}\label{calcsec}

Throughout this subsection all Floer homology is to be taken with fully twisted (universal) coefficients, unless specified otherwise.

To calculate the Floer homology of the boundary of the neighborhood of a surface of square $n$, we rely on knot Floer homology techniques as applied in \cite{OSknot,OSsurg,products}. Specifically, recall that for a nullhomologous knot $K\subset Y$ in a \spinc 3-manifold there is a filtered complex $CFK^\infty(Y,K)$, which is just the Heegaard Floer complex for $(Y,\s)$ equipped with an additional filtration induced by the knot. In fact there are two filtrations on $CFK^\infty$, corresponding to the two basepoints in the Heegaard diagram specifying $K$ in $Y$ (strictly speaking, we are implicitly making use of a choice of homology class of Seifert surface for $K$ to give a numerical value to the knot filtration). As typical in the subject, we use the indices $i$ and $j$ to refer to these filtrations (which take the form of a bigrading on the chain complex generating the Floer homology). We will usually refer to $j$ as the ``knot filtration,'' and $i$ as the $U$-filtration for lack of better terminology. Thus for each $i,j$, there is an associated graded complex $CFK^\infty_{i,j}(Y,K)$, whose homology is $\widehat{HFK}(Y,K,j)\otimes U^i$.

According to \cite{OSknot}, the Floer homology groups of a surgery along $K$, with any sufficiently large surgery coefficient $n$, are given by the homology of certain subcomplexes of $CFK^\infty(Y,K)$. ``Sufficiently large'' means in particular that the following descriptions hold for any $|n|>2g-2$, where $g$ is the Seifert genus of $K$. First suppose $n>2g-2$ is positive. Write $\rr_k$ for any \spinc structure on the natural 2-handle cobordism $W: Y_n\to Y$, extending a given structure on $Y$ and having the property that $\langle c_1(\rr_k), [\widehat{F}]\rangle + n = 2k$ modulo $2n$, where $\widehat{F}$ is a Seifert surface for $K$ capped off by the 2-handle in $W$. Let $\s_k$ be the restriction of $\rr_k$ to $Y_n$ (so that $\s_k$ is independent of the choice of $\rr_k$ satisfying the given condition). Then there is an isomorphism of chain complexes $CF^-(Y_n,\s_k) \cong C\{i<0\mbox{ and } j<|k|\}$, where by the latter notation we mean the subcomplex of $CFK^\infty(Y,K)$ for which the filtration values satisfy the indicated constraint. (The construction of $\s_k$ appears to depend on the homology class of the Seifert surface $F$, but the knot filtration also depends on this choice in a precisely cancelling manner. Note also that if the given \spinc structure on $Y$ has torsion Chern class, then $\s_k$ is uniquely determined by the given condition, regardless the choice of Seifert surface.) Correspondingly, we have an isomorphism of $CF^+(Y_n,\s_k)$ with the quotient complex $C\{i\geq 0 \mbox{ or } j\geq |k|\}$ for large positive $n$. In fact, both these isomorphisms are induced by the surgery cobordism itself: the homomorphism $CF^-(Y_n,\s_k)\to CF^-(Y)$ induced by $W$ equipped with a particular choice for $\rr_k$ induces an isomorphism $F_{W,\rr_k}: CF^-(Y_n,\s_k)\to C\{i<0 \mbox{ and } j<|k|\}$ which shifts degree by the factor
\[
\tau_{n,k} =\frac{|n|-(2|k|-|n|)^2}{4|n|}
\]
and similarly for the case of $CF^+$.

When $n$ is large and negative, we have analogous identifications $CF^-(Y_n, \s_k) \cong C\{i<0\mbox{ or } j < -|k|\}$ and $CF^+(Y_n,\s_k)\cong C\{i\geq 0\mbox{ and } j\geq -|k|\}$, and in this case there is a chain isomorphism $F_{W,\rr_k}: C\{i < 0 \mbox{ or } j < -|k|\} \to CF^-(Y_n, \s_k)$ shifting degree by $\tau_{n,k}$ (note that this map is {\it from} the knot complex while in the positive-surgery case the cobordism-induced homomorphism maps {\it to} the knot complex).

Let $Y_n = \partial D_n$ be the boundary of the oriented disk bundle over a surface $\Sigma$ having Euler number $n$. Observe that $Y_n$ can be realized as the result of $n$-framed surgery along a nullhomologous knot $K$ in $Y = \#^{2g}S^1\times S^2$. Specifically, $K$ is the connected sum of $g$ copies of the ``Borromean knot'' $B(0,0)\subset S^1\times S^2\# S^1\times S^2$, which is the third component of the Borromean rings after 0-surgery is performed on the other two components (and $K$ has Seifert genus $g$). The knot Floer homology of $K = \#^g B(0,0)$ in twisted coefficients is given by the exterior algebra
\[
\widehat{HFK}(Y,K, j) \cong \Lambda^{g+j}M,
\]
where $M = R_Y^{2g}$ is a free module of rank $2g$ over the group ring $R_Y = \zee[H^1(\#^{2g}S^1\times S^2)]$ (c.f. \cite{products}, section 9.3). This is graded so that $\Lambda^{g+j}M$ lies in degree $j$.

\begin{remark}\label{stupidremark} There is a natural handle decomposition of the disk bundle $D_n$ having $2g$ 1-handles and a single 2-handle, attached along $\#^g B(0,0)$ with framing $n$. Hence $D_n$ contains a copy of the surgery cobordism $W$ between $Y_n$ and $Y = \#^{2g} S^1\times S^2$, and furthermore (since the Seifert surface for $\#^gB(0,0)$ caps off in $W\subset D_n$ to become the surface $\Sigma$) the \spinc structure $\tilde{\s}_{k}\in Spin^c(D_n)$ characterized by the condition 
\[
\langle c_1(\tilde{\s}_k),[\Sigma]\rangle = 2k - n
\]
 restricts to $W$ as (a choice for) the \spinc structure $\rr_{k}$ used in the characterization of $\s_k\in Spin^c(Y_n)$.
\end{remark}

%Thus we wish to calculate the homology of $C\{i<0\mbox{ and }j< g-1\}$ (in the case of positive $n$) and $C\{i<0 \mbox{ or } j < g-1 \}$ (in the case of negative $n$). Conjugation invariance of Floer homology implies that the latter homology is the same as the homology of $C\{i<0\mbox{ or } j < 1-g\}$, which is more convenient for us to calculate.

Our goal is to calculate the homology of the complexes $C\{i\geq 0 \mbox{ and } j\geq -|k|\}$, etc., mentioned above.
Now, the filtration of $CFK^\infty$ by $i$, what we might call the ``horizontal'' filtration, gives a filtration of each of these complexes, which yields a spectral sequence for the desired homologies. To use this, note first that the $E_1$-term of the spectral sequence coming from the horizontal filtration of the full complex $CFK^\infty$ is given by $\widehat{HF}(Y,R_Y)\otimes \zee[U,U^{-1}] \cong \zee[U,U^{-1}]$, since the fully-twisted Floer homology of $Y = \#^{2g}S^1\times S^2$ is $\widehat{HF}(Y;R_Y) \cong \zee$, supported in degree $-g$. To expand a bit, the $E_1$ term is calculated by taking the homology of $CFK^\infty$ with respect to the ``vertical'' differentials, which in each column $\{i = const\}$ has the form
\begin{equation}\label{freeres}
0\to \Lambda^{2g}M\to \Lambda^{2g-1}M\to \cdots\to \Lambda^1M\to \Lambda^0M\to 0
\end{equation}
(omitting the $U$-powers). Strictly speaking, this complex appears as the $E_1$ term of a ``spectral subsequence'' (coming from the $j$ filtration) calculating the homology of the column, i.e., the $\Lambda^{g+j}M$ are the homologies of the associated graded complexes in a fixed column, namely the knot Floer homology $\widehat{HFK}(Y,K,j)$. For dimensional reasons there can be no differentials beyond $d_1$ in this spectral subsequence, hence the homology of $d_1$ must be the $E_\infty$ term $\widehat{HF}(Y) \cong \zee$.
In fact, in the column $i =0$, the factor $\Lambda^0M = R_Y$ appears in degree $-g$ and since the homology of the column is $\zee$ in degree $-g$ we infer that the above complex provides a free resolution of $\zee$ over $R_Y$---in particular \eqref{freeres} is exact except at $\Lambda^0M$, where the homology is $\zee$. Furthermore, the (full) spectral sequence collapses at this point: the $E_1$ term is already equal to the $E_\infty$ term $HF^\infty(Y;R_Y) = \zee[U,U^{-1}]$, so there can be no further differentials.

As a warmup for our calculation of $HF^+(Y_n,\s_k)$, consider the subcomplex $C\{i\geq 0 \mbox{ and } j\geq -g+1\}$. Every nonzero column except for $i = 0$ contains a copy of the complex \eqref{freeres}, while the $i = 0$ column contains the same complex truncated at $\Lambda^1M$. Hence the $E_1$ term of the spectral sequence has a copy of $\zee$ in grading $-g+2i$ for each $i> 0$, while in the column $i = 0$ we have the homology of 
\[
0\to \Lambda^{2g}M\to \Lambda^{2g-1}M\to \cdots\to \Lambda^1M \to 0,
\]
where $\Lambda^1M$ lies in degree $-g+1$ (see Figure \ref{plusfig}). 
\begin{figure}[t]
\[
\begin{array}{ccccccccccccc}
 & & \Lambda^4 &&&\cdot &\hspace*{1cm} &  & & 0&&&\cdot\\
 & \Lambda^4 & \Lambda^3&&\cdot&&& & 0 & 0&&\cdot&\\
 \Lambda^4 & \Lambda^3 & \Lambda^2&\cdot&&&& 0 & 0 & 0&\cdot&&\\
 \Lambda^3 & \Lambda^2 &\Lambda^1&&&&& 0 & 0 & 0& \\
 \Lambda^2 & \Lambda^1 & \Lambda^0&&&&& 0 & 0 & \zee&\\
 \Lambda^1 & \Lambda^0 & &&&&& Z_1 & \zee && \\
\\
 &\mbox{(a)}&&&&&&&\mbox{(b)} \end{array}
 \]
 \caption{Stages in calculating the homology of $C\{i\geq 0 \mbox{ and } j\geq -g+1\}$, for the case $g = 2$. In each case the leftmost column is $i = 0$. Part (a) shows the complex after taking the $E_0$ homology of each column, where we write $\Lambda^j$ for $\Lambda^j M$, and (b) is the result of taking homology with respect to the vertical differential.}\label{plusfig}
\end{figure}
Since the original complex \eqref{freeres} is exact except at the rightmost point, the complex above has vanishing homology except in the lowest degree, where the homology is a copy of $Z_1:=\Lambda^1M/\im(\Lambda^2M\to \Lambda^1M) \cong \ker(\Lambda^0M\to \zee)$, where the latter is the map arising when viewing \eqref{freeres} as a free resolution of $\zee$. In other words, $Z_1$ is isomorphic to the augmentation ideal $\ker(R_Y\to \zee)$. Now consider the $d_1$ differential in the spectral sequence, the horizontal differential. Clearly the only possible nontrivial differential maps $\zee\to Z_1$ in the row $j = -g+1$. This map is trivial: indeed, there is no nontrivial map of $R_Y$ modules from $\zee$ to the augmentation ideal, since the latter is contained in the free module $R_Y$. Since there can be no differentials further on in the sequence, the spectral sequence collapses at this point. Thus, we see
\[
HF^+_{i+\tau_{n,-g+1}}(Y_n,\s_{-g+1}) = \left\{ \begin{array}{ll}(\T_{-g})_{i} & i\geq -g+2 \\ Z_1 & i = -g+1 \\ 0 & \mbox{otherwise}\end{array}\right.
\]
whenever $n\leq 1-2g$. Here we adopt the standard notational convention that $\T_m$ is the $R_Y\otimes\zee[U]$-module given by
\[
\T_m = \frac{\zee[U,U^{-1}]}{U\cdot\zee[U]},
\]
where elements of $H^1(Y)$ act as the identity, $U$ carries degree $-2$, and $\T_m$ is graded such that the homogeneous summand of lowest degree lies in degree $m$.

We can apply similar reasoning to calculate $HF^-(Y_n, \s_{g-1})$ for {\it positive} $n$. Indeed, this Floer homology is given by the homology of $C\{i<0 \mbox{ and } j< g-1\}$. The homology of the vertical differential in this case is isomorphic to $\zee$ in degree $-g+2i$ for each $i <-1$, while the column $i = -1$ contains the homology of
\[
0\to \Lambda^{2g-1}M\to \Lambda^{2g-2}M\to \cdots\to \Lambda^0M\to 0,
\]
with $\Lambda^0M = R_Y$ lying in degree $-g-2$ (c.f. Figure \ref{minusfig}). 
\begin{figure}[t]
\[
\begin{array}{ccccccccccccc}
&&&& \Lambda^4 & \Lambda^3 &\hspace*{1cm} & &&&& 0 & R_Y \\
&&&\Lambda^4 & \Lambda^3 & \Lambda^2 &&&&& 0 & 0 & 0 \\
&&& \Lambda^3 & \Lambda^2 & \Lambda^1 &&&&& 0 & 0 & 0 \\
&&\cdot& \Lambda^2 & \Lambda^1 & \Lambda^0 &&&&\cdot& 0 & 0 & \zee \\
&\cdot&&\Lambda^1 & \Lambda^0 & &&&\cdot&& 0 & \zee & \\
\cdot&&&\Lambda^0 & & &&\cdot&&& \zee \\\\
&&& & \mbox{(a)} & &&&&& & \mbox{(b)} \\
\end{array}
\]
\caption{Calculation of the homology of $C\{i<0 \mbox{ and } j < g-1\}$ for $g = 2$. Here the rightmost column is $i = -1$. As before (a) is the $E_0$ homology, and (b) is the homology of the vertical differential.}\label{minusfig}
\end{figure}
Exactness of \eqref{freeres} implies that the homology of this sequence is given by $\zee$ in dimension $-g-2$ and $\im(\Lambda^{2g}M\to \Lambda^{2g-1}M)$ in dimension $g-3$. Since $\Lambda^{2g}M = R_Y$ and $\Lambda^{2g}M\to \Lambda^{2g-1}M$ is injective, the homology in dimension $g-3$ is a copy of $R_Y$. Degree considerations show that the spectral sequence collapses here, and we get:
\begin{equation}\label{hf-answer}
HF^-_{i-\tau_{n,g-1}}(Y_n, \s_{g-1}) = \left\{\begin{array}{ll} R_Y & i = g-3 \\ (\zee[U])_{i+g} & i \leq -g-2 \\ 0 & \mbox{otherwise}\end{array}\right. \quad (n\geq 2g-1)
\end{equation}

%
%only one nonzero term, namely $\Lambda^0M = R_Y$ in degree $-g$. The nonzero terms in this $E_1$ stage occur in alternating degrees, so the sequence collapses: we have
%\begin{equation}\label{hf-answer}
%HF^-_{i+\tau_{n,-g+1}}(Y_n, \s_{-g+1}) = \left\{ \begin{array}{ll} R_Y & i = -g \\ (\zee[U])_{i+g} & i < -g \\ 0 & \mbox{otherwise}\end{array}\right.
%\end{equation}

The general calculation we need is given in the following theorem. To state it, we recall that associated to the resolution \eqref{freeres} of $\zee$ as an $R_Y$-module there are ``syzygy modules'' $Z_\ell$, $\ell = 0,\ldots, 2g$, given by
\begin{eqnarray*}
Z_0 & = & \zee \\
Z_1 &=& \ker(\Lambda^0M\to \zee)\\
Z_\ell &=& \ker(\Lambda^{\ell-1}M\to \Lambda^{\ell-2}M) \qquad\quad (\ell\geq 2)\\
&=& \Lambda^\ell M/\im(\Lambda^{\ell+1}M\to \Lambda^\ell M)\\ &=& \im(\Lambda^{\ell}M \to \Lambda^{\ell - 1} M).
\end{eqnarray*}
We have, as previously, $Z_1$ is the augmentation ideal $\ker(\Lambda^0M\to \zee)$, while since the first map in \eqref{freeres} is injective, $Z_{2g} = R_Y$. We will see in the proof of the following that there is an $R_Y$-module homomorphism $\delta: Z_\ell\to Z_{\ell+1}$ for each $\ell$.

\begin{theorem}\label{calcthm} For any $n\leq 1-2g$, the twisted Floer homology of the circle bundle $Y_n$ of degree $n$ over an orientable surface of genus $g\geq 1$ is given as a module over $R_Y \otimes \zee[U]$ by
\[
HF^+_{i+\tau_{n,k}}(Y_n, \s_{k}) = \left\{\begin{array}{ll} (\T_{-g})_{i} & i \geq -|k|+1 \\ Z_{g-|k|}/\delta(Z_{g-|k|-1}) & i = -|k| \\ 0 & \mbox{otherwise}\end{array}\right.
\]
for each $k = -g+1 ,\ldots, g-1$, while if $|k|>g-1$ then $HF^+(Y_n,\s_k)\cong \T_{-g+\tau_{n,k}}$. In particular, for each \spinc structure the reduced Floer homology is supported in a single degree. 
\end{theorem}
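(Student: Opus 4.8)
The plan is to extract $HF^+(Y_n, \s_k)$ from the identification $CF^+(Y_n, \s_k) \cong C\{i \geq 0 \text{ or } j \geq -|k|\}$ (valid for $n \leq 1-2g$, i.e. $|n| > 2g-2$) by running the spectral sequence associated to the horizontal ($U$-)filtration, exactly as in the two warmup calculations already carried out for $k = -g+1$ and (in the dual form) $k = g-1$. First I would take the $E_1$-page: on the associated graded with respect to the vertical ($j$-)differential, each column $\{i = \text{const}\}$ of $CFK^\infty$ carries the complex \eqref{freeres}, whose homology is $\zee$ concentrated at $\Lambda^0 M$; so in the quotient complex $C\{i \geq 0 \text{ or } j \geq -|k|\}$, every column $i > 0$ contributes a full copy of this resolution (homology $\zee$ in degree $-g + 2i$), while the column $i = 0$ contributes only the truncation $0 \to \Lambda^{2g}M \to \cdots \to \Lambda^{g-|k|}M \to 0$ (the terms $\Lambda^{g+j}M$ with $j \geq -|k|$). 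By exactness of \eqref{freeres} away from its right end, the homology of this truncated column is $Z_{g-|k|} = \ker(\Lambda^{g-|k|-1}M \to \Lambda^{g-|k|-2}M) = \Lambda^{g-|k|}M/\im(\Lambda^{g-|k|+1}M \to \Lambda^{g-|k|}M)$, sitting in degree $-|k|$ (up to the overall shift), and nothing else.

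Next I would analyze the $d_1$ (horizontal) differential on this $E_1$-page. The only potentially nonzero $d_1$ is the map from the $\zee$ in the $i = 1$ column down to $Z_{g-|k|}$ in the $i = 0$ column. Here I claim this differential is precisely the map $\delta: Z_{g-|k|-1} \to Z_{g-|k|}$ promised in the statement — more precisely, one identifies the $\zee$ in the $i=1$ column with $Z_0 = \zee = Z_{g-|k|-1}$ when $|k| = g-1$, and in general the composite of horizontal differentials linking the columns realizes the connecting map in the relevant snake-lemma diagram, which is the homomorphism $\delta$. The cleanest way to see that the induced map on homology is $\delta(Z_{g-|k|-1}) \subseteq Z_{g-|k|}$ is to compare the two truncations of \eqref{freeres} cut off at adjacent spots; the horizontal differential of $CFK^\infty$ restricted to $C\{i \geq 0 \text{ or } j \geq -|k|\}$ intertwines these, giving exactly the image $\delta(Z_{g-|k|-1})$. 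After quotienting, the surviving $E_2 = E_\infty$ term in the bottom row is $Z_{g-|k|}/\delta(Z_{g-|k|-1})$ in degree $-|k|$, while the columns $i \geq 1$ reassemble into the tower $\T_{-g}$ truncated from below at $i = -|k|+1$; degree considerations (the entries live in degrees of fixed parity and widely separated) force collapse at $E_2$. For $|k| > g-1$ the truncated column $i = 0$ is all of \eqref{freeres}, contributing $\zee$ at the bottom, so the whole thing assembles into a single tower $\T_{-g+\tau_{n,k}}$, with no reduced part.

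The main obstacle I anticipate is pinning down the precise identification of the $d_1$ differential with the syzygy map $\delta$, including getting the indexing of $Z_\ell$ versus $Z_{\ell-1}$ and the degree bookkeeping right, and verifying that $\delta$ is genuinely a well-defined $R_Y$-module homomorphism $Z_\ell \to Z_{\ell+1}$ (this should follow from a diagram chase: an element of $Z_\ell = \ker(\Lambda^{\ell-1}M \to \Lambda^{\ell-2}M)$ lifts, via exactness one step further in, to $\Lambda^\ell M$, and one pushes into $\Lambda^{\ell}M/\im(\Lambda^{\ell+1}M \to \Lambda^\ell M) \cong Z_\ell$ — so some care is needed that this is the right map and is well-defined). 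A secondary technical point is ruling out higher differentials, but as in the warmups this is immediate from the parity/spacing of the gradings. Everything else — the form of $C\{i \geq 0 \text{ or } j \geq -|k|\}$, the exactness of \eqref{freeres}, the degree shift $\tau_{n,k}$ — is already established in the text, so the proof is essentially the two model computations run with a general truncation point $g - |k|$ in place of $g-1$.
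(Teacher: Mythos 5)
Your proposal has the right general strategy (run the spectral sequence of the $i$-filtration on the sub-quotient $C\{i\geq 0 \text{ and } j\geq -|k|\}$), but the $E_1$-page is misdescribed in a way that hides the key step of the paper's argument. First, a small slip: the identification is $CF^+(Y_n,\s_k)\cong C\{i\geq 0 \text{ and } j\geq -|k|\}$, not ``or.'' More seriously, you assert that ``every column $i>0$ contributes a full copy of the resolution \eqref{freeres}'' and only $i=0$ is truncated. That is only true when $|k|=g-1$. For general $k$, the generators in column $i$ sit at $j\in[i-g,i+g]$, so the constraint $j\geq -|k|$ truncates \emph{every} column with $0\leq i < g-|k|$: the vertical homology there is the syzygy module $Z_{g-|k|-i}$ (in degree $-|k|+i$, along the line $j=-|k|$), and only for $i\geq g-|k|$ do you get the full column with homology $\zee$ in degree $2i-g$. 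Consequently the $d_1$ page along the bottom diagonal is an entire chain
\[
Z_{g-|k|}\lFrom Z_{g-|k|-1}\lFrom\cdots\lFrom Z_1\lFrom\zee,
\]
not a single map $\zee\to Z_{g-|k|}$, and the ``composite of horizontal differentials / snake lemma'' gesture you make in its place does not survive scrutiny: $d_1\circ d_1=0$, so you cannot compose horizontal differentials, and without correctly identifying the page there is nothing for a connecting map to connect.

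The missing idea---and the crux of the paper's proof---is to first compute $H_*(C\{j\geq 0\})$ by the \emph{same} spectral sequence and compare with the known answer $H_*(C\{j\geq 0\})\cong H_*(C\{i\geq 0\})=\T_{-g}$ (quasi-isomorphism of sub-quotients). That comparison forces the $E_1$ differentials along the row $j=0$ to assemble into the sequence
\[
0\lFrom Z_{2g}\lFrom Z_{2g-1}\lFrom\cdots\lFrom Z_1\lFrom \zee\lFrom 0,
\]
which simultaneously \emph{defines} the $R_Y$-module maps $\delta_\ell:Z_\ell\to Z_{\ell+1}$ and pins down their homology ($\zee$ at even $\ell$, $0$ at odd $\ell$). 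With that in hand, the truncated calculation is immediate: for $i>0$ the portion of the bottom row is identical (up to grading shift) to a segment of that sequence, so the $E_2$ entries reproduce $\T_{-g}$ exactly; at $i=0$ the homology is the cokernel $Z_{g-|k|}/\delta(Z_{g-|k|-1})$; and collapse at $E_2$ then follows for degree reasons. Your proposal explicitly flags ``pinning down the identification with $\delta$'' and ``verifying that $\delta$ is a well-defined $R_Y$-module homomorphism'' as the anticipated obstacle; the comparison with the a priori known $\T_{-g}$ is precisely what resolves it, and it is not present in your sketch.
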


Specifically, it will be useful later for us to know that when $n= 1-2g$, the reduced Floer homology of $Y_{1-2g}$ in the \spinc structure $\s_k$ is supported in degree $D(k)$, where
\[
D(k) =-|k| + \tau_{1-2g,k} =  -\frac{k^2}{2g-1} - \frac{g-1}{2}.
\]

\begin{proof} We will proceed using the spectral sequence coming from the filtration by $i$ as in the sample calculations above. Before doing so, however, we make the following observation.

The complex $C\{j\geq 0\}$ is, by results of \cite{OSknot}, quasi-isomorphic to $C\{i\geq 0\}$, and the latter is just the complex $CF^+(Y)$. In particular we know a priori that $H_*(C\{j\geq 0\})\cong \T_{-g}$. Suppose, however, that we wish to calculate this homology using the spectral sequence coming from the $i$ filtration. In a column given by a fixed value of $i$ with $i\geq g$,  vertical differential has homology $\zee$ in dimension $2i-g$. For the columns with $i <g$, the vertical homology is supported in degree $i$ (i.e., the nonzero homology lies along the line $j=0$) and is equal to the syzygy module $Z_{g-i}$. From the structure of this $E_1$ term, it must be the case that the spectral sequence collapses at the $E_2$ stage: that is, the homology of the $E_1$ (horizontal) differential must equal $H_*(C\{j\geq 0\}) = \T_{-g}$. Hence, reading along the row $j =0$, the horizontal differentials give a sequence
\begin{equation}\label{Zseq}
0\lFrom Z_{2g} \lFrom^{\delta_{2g-1}} Z_{2g-1} \lFrom^{\delta_{2g-2}}\cdots\lFrom^{\delta_{2}} Z_2\lFrom^{\delta_1} Z_1 \lFrom^{\delta_0} \zee \lFrom 0
\end{equation}
whose homology is $\zee$ for even-indexed terms and 0 for the others.

Now consider the calculation of $H_*(C\{i\geq 0 \mbox{ and } j\geq k\})$, where by conjugation symmetry we may assume that $k \leq 0$. After taking the vertical homology, we have a ``truncated'' version of the complex just considered: for $i\geq g+k$ the homology is $\zee$ in dimension $2i-g$, while for $0\leq i < g+k$ the homology is $Z_{g+k-i}$ in degree $k+i$, lying along the line $j = k$ (see Figure \ref{genminfig}).
\begin{figure}[bt]
\[
\begin{array}{cccccccc}
\phantom{Z_{g+k-1}}&\phantom{Z_{g+k-1}}&\phantom{Z_{g+k-1}}&\phantom{Z_{g+k-1}}&\phantom{Z_{g+k-1}}&\phantom{Z_{g+k-1}}&0 &\\
&&&&&0 & \vdots &\\
&&&&0&\vdots&0 &\rotatebox{65}{$\ddots$} \\
&&&0&\vdots&0&0 &\\
&&&\vdots&0&0&0& \\
&0&\phantom{\vdots}&0&0&0&0&\\
0&\vdots&&0&0&0&0&\vspace*{-.5ex}\\
\vdots&0&\rotatebox{65}{$\ddots$}&0&0&0&\zee&\\
0&0& \phantom{\vdots}&0&0&\zee\\
Z_{g+k} & Z_{g+k-1} &\phantom{\vdots} & Z_1 & \zee &\phantom{Z_{g+k-1}}&\phantom{Z_{g+k-1}}
\end{array}
\]
\caption{The $E_1$ stage of the spectral sequence calculating the homology of $C\{i\geq 0 \mbox{ or } j\geq k\}$. The leftmost column is $i =0$; the bottom row is $j = k$. The only nontrivial $E_1$ differentials map to the left along the bottom row in a truncated version of \eqref{Zseq}.}\label{genminfig}
\end{figure}
Again, the horizontal differentials must yield the $E_\infty$ term for dimensional reasons, and the only nontrivial horizontal differentials are those between the $Z_\ell$. For $i > 0$, this portion of our complex is identical (up to grading shift) with a part of the sequence \eqref{Zseq} in the previous paragraph---in particular the horizontal differentials have homology $\zee$ or $0$ as above except when $i=0$, and the homology in the latter case is the quotient $Z_{g+k}/\delta_{g+k-1}(Z_{g+k-1})$.
\end{proof}

The twisted Floer homology for circle bundles of large positive degree can be calculated by similar methods, though the argument that the spectral sequence for $HF^+$ collapses at the $E_2$ stage is slightly more complicated in this case. For the sake of completeness we state the result here, but since we will not need it in the sequel we omit the proof.

\begin{theorem} For any $n\geq 2g-1$, the twisted Floer homology of the circle bundle $Y_n$ of degree $n$ over an orientable surface of genus $g\geq 1$ is given as an $R_Y$ module by
\begin{equation}\label{posdeganswer}
HF^+(Y_n,\s_k) \cong K_{g+|k|+1} \oplus \zee[U]/U^{r_k} \oplus \T_{-g-\tau_{n,k}}
\end{equation}
for each $k = -g+1,\ldots,g-1$, while if $|k|>g-1$ then $HF^+(Y_n,\s_k) \cong \T_{-g-\tau_{n,k}}$. 

Here $K_\ell$ is the $R_Y$-module given by
\[
K_\ell = \ker(\delta: Z_{\ell}\to Z_{\ell +1}),
\]
and the factor $K_{g+|k|+1}$ above is supported in degree $|k|-1 - \tau_{n,k}$. Also, $r_k = \lfloor\frac{g-|k|}{2}\rfloor$, and the factor $\zee[U]/U^{r_k}$ is graded such that the nontrivial factor of lowest degree lies in degree $2|k|-g-\tau_{n,k}$.\hfill$\Box$
\end{theorem}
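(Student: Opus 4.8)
The plan is to follow the proof of Theorem~\ref{calcthm} very closely, replacing the negative-framing large-surgery description of $CF^+$ by the positive one. By conjugation symmetry we may take $k\ge 0$, and when $k\ge g$ the relevant complex is simply $C\{i\ge 0\}=CF^+(Y)$, whose homology is $\T$ shifted by $\tau_{n,k}$; so assume $0\le k\le g-1$ and $n\ge 2g-1$. Then $CF^+(Y_n,\s_k)$ is isomorphic to the quotient complex $C\{i\ge 0\text{ or }j\ge k\}$ of $CFK^\infty(Y,K)$, where $K=\#^gB(0,0)\subset Y=\#^{2g}S^1\times S^2$, up to the grading shift $\tau_{n,k}$, and I would run the spectral sequence of the filtration by the $U$-coordinate $i$.

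First I would pin down the $E_1$ and $E_2$ pages. Passing to the associated graded in $j$ produces in each column the array $\bigoplus_a\widehat{HFK}(Y,K,a)=\bigoplus_a\Lambda^{g+a}M$, and taking vertical homology uses the exactness of the free resolution \eqref{freeres}. The ``or'' makes the bookkeeping differ from the negative case: each column $i\ge 0$ is retained in full, carries all of \eqref{freeres}, and contributes a single $\zee$ in bidegree $(i,i-g)$; these line up along the diagonal and will become the summand $\T_{-g-\tau_{n,k}}$. Each column $i=-m$ with $1\le m\le g-k$ is truncated \emph{from below} to $j\ge k$, carries the truncated complex $0\to\Lambda^{2g}M\to\cdots\to\Lambda^{g+k+m}M\to 0$, and hence contributes exactly the syzygy module $Z_{g+k+m}$ at bidegree $(-m,k)$. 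So $E_1$ is the diagonal tower together with $Z_{g+k+1},\dots,Z_{2g}$ strung along the horizontal line $j=k$, the $d_1$ differential along that line being --- by the $i\leftrightarrow j$ symmetry of $CFK^\infty$, which makes the horizontal differential a ``co-Koszul'' map --- the tail $Z_{g+k+1}\xrightarrow{\delta}\cdots\xrightarrow{\delta}Z_{2g}$ of the sequence \eqref{Zseq}. Since the $i=0$ column contributes nothing on the line $j=k>-g$, nothing maps into $Z_{g+k+1}$, so the homology there is $\ker\delta=K_{g+k+1}$; and since (as established in the proof of Theorem~\ref{calcthm}) the homology of \eqref{Zseq} is $\zee$ in even positions and $0$ in odd, the remaining homology along $j=k$ is one $\zee$ for each $m\in\{2,\dots,g-k\}$ with $g+k+m$ even, which is exactly $r_k=\lfloor(g-k)/2\rfloor$ of them, the lowest in degree $2k-g-\tau_{n,k}$ and $K_{g+k+1}$ in degree $k-1-\tau_{n,k}$, after tracking the $U$-power shift in each column.

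The main obstacle --- and the reason this computation is ``slightly more complicated'' than the negative one --- is the final step: establishing collapse at $E_2$ and, above all, identifying the resulting $\zee[U]$-module. In the negative case collapse was forced by the Maslov grading alone, but here the tower occupies degrees $-g,-g+2,\dots$ while the line $j=k$ occupies degrees $2k-g,\dots,k-1$, and these ranges overlap. The surjection $C\{i\ge0\text{ or }j\ge k\}\to C\{i\ge 0\}$ splits off $\T$ as a summand (the connecting map from the divisible module $\T^+$ into the degree-bounded module $H_*(C\{i<0,j\ge k\})$ must vanish), which in turn forces all tower classes to survive and hence kills any $d_r$ between the tower and the line $j=k$; together with the grading argument that $d_{\ge 2}$ vanishes among the syzygies, this yields collapse. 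The remaining, and hardest, issue is an extension problem: the $r_k$ surviving copies of $\zee$ sit at consecutive levels of the $i$-filtration, on which $U$ lowers the filtration by exactly two, so the associated graded sees $U=0$ while the module itself may be the non-split extension $\zee[U]/U^{r_k}$. To settle this I would carry out the companion computation of $\widehat{HF}(Y_n,\s_k)$ by the same spectral sequence applied to the ``hook'' complex $C\{\max(i,j-k)=0\}$, and compare its rank with what the two candidate structures for $HF^+_{red}$ predict through the exact sequence relating $\widehat{HF}$ to the kernel and cokernel of $U$ on $HF^+$: a split extension would make $\widehat{HF}_{red}$ too large, which forces the answer to be $K_{g+k+1}\oplus\zee[U]/U^{r_k}\oplus\T_{-g-\tau_{n,k}}$, as claimed. (A rank comparison with $HF^\infty(Y_n,\s_k)\cong\zee[U,U^{-1}]$ and with $HF^-(Y_n,\s_k)$, computed by the same method from $CF^-(Y_n,\s_k)\cong C\{i<0\text{ and }j<k\}$ and generalizing \eqref{hf-answer}, serves as a consistency check on the underlying groups.)
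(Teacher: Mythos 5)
Since the paper explicitly omits the proof of this theorem, I can only evaluate the proposal on its own terms rather than against the author's argument. That said, the strategy — running the $i$-filtration spectral sequence on $C\{i\ge 0\text{ or }j\ge |k|\}$, in parallel with the proof of Theorem \ref{calcthm} — is the right one, and the computation of the $E_1$ page (a tower $\zee$ at $(i,i-g)$ for each $i\ge 0$; the truncated vertical resolution giving $Z_{g+k+m}$ at $(-m,k)$ for $1\le m\le g-k$), the identification of $d_1$ along $j=k$ as the tail of \eqref{Zseq}, and the resulting $E_2$ page ($K_{g+k+1}$ in column $-1$, $r_k=\lfloor(g-k)/2\rfloor$ copies of $\zee$ at the stated degrees, plus the tower) all check out. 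Your short-exact-sequence argument — that the connecting map $\T_{-g}\to H_{*-1}(C\{i<0,\,j\ge k\})$ vanishes because the target is bounded above in degree while every class in $\T_{-g}$ is a $U^n$-multiple for arbitrary $n$ — is correct and is a clean way to dispose of the collapse issue the paper singles out as ``slightly more complicated.'' Perhaps even cleaner: once that connecting map vanishes you get $HF^+(Y_n,\s_k)\cong H_*(C\{i<0,j\ge k\})\oplus\T$ outright, and the left summand is computed by its own (finite, degree-forced) spectral sequence, so one never needs to argue about higher differentials in the spectral sequence for the full quotient complex at all. Because the graded pieces land in distinct Maslov degrees, this does pin down the $R_Y$-module structure.

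Two points merit flagging. A minor one: ``the $r_k$ surviving copies of $\zee$ sit at consecutive levels of the $i$-filtration, on which $U$ lowers the filtration by exactly two'' is off — the surviving copies lie at $i=-m$ with $m$ of a fixed parity, so those filtration levels differ by \emph{two}, while $U$ lowers the $i$-filtration by \emph{one}; the relevant fact is simply that $U$ strictly lowers the filtration (hence vanishes on the associated graded) while the Maslov degrees of successive summands differ by two, leaving room for a nontrivial $\zee[U]$-extension. The substantive gap is that this extension problem, which you correctly identify as the hard part, is left unresolved. Deciding that the $r_k$ copies of $\zee$ form a single $\zee[U]/U^{r_k}$ rather than a direct sum of shorter cyclic pieces requires the companion computation of $\widehat{HF}(Y_n,\s_k)$ that you only sketch. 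That plan is plausible in principle, but the hook complex $C\{\max(i,j-k)=0\}$ has an $E_1$ page of rather different shape (the $i=0$ column now has vertical homology in two degrees, $-g$ and $k$, while columns $i<0$ carry full $\Lambda^{g+k+m}M$'s with no vertical differential), so the bookkeeping is genuinely new; and the sentence ``a split extension would make $\widehat{HF}_{red}$ too large'' presupposes the outcome of that computation rather than establishing it. As written, the $\zee[U]$-structure claim in the theorem statement is not proved.
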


In particular we see that the reduced Floer homology for circle bundles of large positive degree is generally {\it not} supported in a single degree. Since $Y_n$ and $Y_{-n}$ differ only by an orientation change, it is also possible in principle to verify this fact from symmetries of Heegaard Floer homology. Namely, {\OnS} \cite{OS2,OS4} showed that 
\[
HF^+_i(Y, \s;M) \cong HF^{-i-2}_-(-Y,\s;M)
\]
for a torsion \spinc structure $\s$, where the right hand side indicates the {\it cohomology} of the complex $CF^-(Y,\s;M)$. Hence one can use a universal coefficients spectral sequence to provide an alternative calculation of the Floer modules for circle bundles of positive degree starting with those of negative degree, though the homological algebra is somewhat more involved. In brief, the modules $Q_\ell = Z_\ell/\delta(Z_{\ell -1})$ appearing in Theorem \ref{calcthm} have the property that $\mbox{Ext}^q_R(Q_\ell, R)$ is nontrivial for several values of $q$, which gives rise to cohomology in several degrees.

\begin{remark} When $k\equiv g$ modulo 2, the isomorphism \eqref{posdeganswer} respects the action of $U$; in particular $U$ vanishes on $K_{g+|k|+1}$. In the other case, the action of $U$ on $K_{g+|k|+1}$ is not quite clear from our analysis (unless $|k| = g-1$, where $K_{g+|k|+1} = K_{2g} = R_Y$ with trivial $U$-action and the middle summand of \eqref{posdeganswer} drops out).
\end{remark}

\section{Blowing up}\label{blowupsec}

Our intent is to apply Theorem \ref{knotsurgthm} to distinguish the complements of $\Sigma$ and $\Sigma_K$, when $\Delta_K(t)$ is nontrivial. To do so, we need to know that the relative invariant of $Z = X\setminus nbd(\Sigma)$ is nonzero (Theorem \ref{nonvanthm}), and also understand enough about the Floer homology of $Y = \partial Z$ to say that multiplication by $\Delta_K(t)$ has a nontrivial effect on $\Psi_{Z}$, even up to automorphisms of the Floer homology (equation \eqref{hf-answer}). Of course, the two results just mentioned have incompatible assumptions on the square of $\Sigma$. To bridge the gap between them we make the following construction. 

Given a symplectic surface $\Sigma\subset X$ as usual, let $\tX\cong X\#\overline{\cee P}^2$ be the blowup of $X$ at a point of $\Sigma$, and $\tSigma$ the symplectic surface in $\tX$ obtained by symplectically smoothing the total transform of $\Sigma$. Then if $\Sigma$ has square $n$, the blown-up surface $\tSigma$ has square $n-1$. If $\Sigma_K$ is a knotted version of $\Sigma$ (obtained by rim surgery using $K$), and if there is a diffeomorphism $(X,\Sigma_K)\cong (X, \Sigma)$, then certainly $(\tX,\tSigma_K)$ and $(\tX,\tSigma)$ are diffeomorphic---thus it suffices to distinguish the blown-up surfaces. 

Observe that since blowing up is a symplectic operation, there is a canonical \spinc structure $\tkk$ on $\tX$ which satisfies $c_1(\tkk) = c_1(\kk) - E$ where $\kk$ is the canonical \spinc structure on $X$ and $E$ is the Poincar\'e dual of the exceptional sphere ${\mathbb C}P^1\subset \cpbar$.

Let $U\subset \tX$ be a regular neighborhood of the union of $\Sigma$ and the exceptional curve $E$ of the blowup (i.e., $U$ is a neighborhood of the total transform of $\Sigma$). Then the complement of $U$ in $\tX$ is naturally diffeomorphic to $Z = X\setminus nbd(\Sigma)$; we will identify these two manifolds. Let $V\subset\subset U$ be a regular neighborhood of the smoothing $\tSigma$, and let $W = \overline{U}\setminus V$. If $Y_m$ denotes the circle bundle over a genus $g$ surface having Euler number $m$, then we can (with appropriate orientation choice) view $W$ as a cobordism from $Y_{-n}$ to $Y_{-n+1}$, and the complement $\tZ = \tX\setminus nbd(\tSigma)$ is given by $\tZ = Z \cup_{Y_{-n}} W$. For the next result we assume that the genus of $\Sigma$ is at least 2, and for technical reasons that the restriction map $H^1(Z;\zee)\to H^1(\partial Z;\zee)$ is trivial (e.g., the complement of $\Sigma$ is simply connected).

\begin{theorem}\label{blowupthm} Assume, in the situation above, that $n = 2-2g$. Let $\kk$ be the restriction of the canonical \spinc structure on $X$ to $Z$, and let $\tkk_+$ (resp. $\tkk_-$) be the restriction of the canonical \spinc structure $\tkk$ on $\tX$ to $\tZ$ (resp. the restriction of the \spinc structure $\tkk + E$). Then the relative invariant of $\tZ$ is nonzero in at least one of the \spinc structures $\tkk_\pm$. 
\end{theorem}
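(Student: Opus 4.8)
The plan is to realise $\Psi_{\tZ,\tkk_{\pm}}$ as the image of $\Psi_{Z,\kk}$ under the cobordism map of $W$, and to use the structure of the twisted Floer homology of the circle bundles $Y_{2g-2}$ and $Y_{2g-1}$ to show that this map cannot annihilate $\Psi_{Z,\kk}$ simultaneously in both relevant \spinc structures. Since $\partial Z=Y_{2g-2}$ and $\tZ=Z\cup_{Y_{2g-2}}W$, the composition law for relative invariants provides representatives with
\[
\Psi_{\tZ,\tkk_{+}}=\Pi_*\,F^-_{W,\rr_{+}}(\Psi_{Z,\kk}),\qquad
\Psi_{\tZ,\tkk_{-}}=\Pi_*\,F^-_{W,\rr_{-}}(\Psi_{Z,\kk}),
\]
where $\rr_{\pm}$ are the two \spinc structures on $W$ restricting to $\kk$ on $Y_{2g-2}$ and to $\tkk_{\pm}$ on $Y_{2g-1}$, and $\Pi_*$ is the map induced by the coefficient change $\zee[K(Z)](W)\to\zee[K(\tZ)]$; under the standing hypothesis that $H^1(Z)\to H^1(\partial Z)$ is trivial this is an isomorphism of $R_Y$-modules, so we may treat all of these groups as twisted by the full group ring $R_Y$. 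Since $n=2-2g$ satisfies $|n|\le 2g-2$ and $n<0$ for $g\ge 2$, Theorem \ref{nonvanthm} applies: $\Psi_{Z,\kk}$ is nonzero in $HF^-_{red}(Y_{2g-2},\kk;R_Y)$, and remains so mod $2$. Thus it suffices to prove that $F^-_{W,\rr_{+}}$ or $F^-_{W,\rr_{-}}$ is nonzero on the class $\Psi_{Z,\kk}$.

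Next I would pin down the \spinc data. Using the adjunction formula for $\Sigma\subset X$ together with the bookkeeping of Remark \ref{stupidremark}, the restriction $\kk|_{Y_{2g-2}}$ is the \spinc structure $\s_{g-1}$, and the two extensions $\rr_{\pm}$ restrict to $Y_{2g-1}$ as two consecutive \spinc structures, one of which is $\s_{g-1}$ and the other of which, $\s_{g}$, is extremal. By the computation of $HF^-$ for circle bundles of large positive degree (equation \eqref{posdeganswer} with $|k|=g-1$), the reduced part of $HF^-(Y_{2g-1},\s_{g-1})$ is a \emph{free} $R_Y$-module of rank one, supported in a single degree, whereas the extremal summand $HF^-(Y_{2g-1},\s_{g})\cong\T$ has trivial reduced part. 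Consequently $\Psi_{\tZ,\tkk}$ is nonzero for whichever of $\tkk_{\pm}$ restricts to $\s_{g-1}$ on $Y_{2g-1}$, provided the corresponding map $F^-_{W,\rr}\colon HF^-(Y_{2g-2},\s_{g-1})\to HF^-(Y_{2g-1},\s_{g-1})$ does not kill $\Psi_{Z,\kk}$; the slight ambiguity in which of $\tkk_{\pm}$ this is accounts for the form of the statement.

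To control that map I would describe $W$ explicitly: it is a neighborhood of the exceptional sphere reglued across the node of the total transform, a two‑handle cobordism raising the Euler number of the circle bundle by one, so that the maps $F^-_{W,\rr}$ admit a surgery‑formula description in terms of the knot filtration of $\#^g B(0,0)$ exactly as in the calculations of \cite{OSknot} recalled in Section \ref{calcsec}. Two complementary ways of finishing then present themselves. One is to turn $W$ upside down, producing a left‑inverse cobordism $\overline W\colon Y_{2g-1}\to Y_{2g-2}$ with $W\cup_{Y_{2g-1}}\overline W\cong Y_{2g-2}\times I$, whence the total map $F^-_{W}=\sum_{\rr}F^-_{W,\rr}$ is split injective; combined with the grading–shift formula and the fact (from Section \ref{calcsec}) that $HF^-(Y_{2g-1},\s_k)\cong\T$ carries no reduced homology for $|k|>g-1$, this restricts the possible nonzero contributions on the reduced class $\Psi_{Z,\kk}$ to $\rr_{+}$ and $\rr_{-}$, so that $0\ne F^-_{W}(\Psi_{Z,\kk})=F^-_{W,\rr_{+}}(\Psi_{Z,\kk})+F^-_{W,\rr_{-}}(\Psi_{Z,\kk})$ and at least one summand survives. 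The second way is to compute $F^-_{W,\rr}$ directly: in the relevant degree it is induced by an inclusion of truncated subcomplexes of $CFK^\infty(\#^g B(0,0))$, hence on homology by a map of syzygy modules followed by the projection onto $HF^-_{red}(Y_{2g-1},\s_{g-1})\cong Z_{2g}=R_Y$, which one checks to be onto; one then combines this with the injectivity just described (or with an explicit description of the relative invariant of a symplectic surface complement in the canonical \spinc structure) to conclude that $\Psi_{Z,\kk}$ is not in the kernel.

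The main obstacle is exactly this last step: making the handle structure of $W$, its framings, and the associated \spinc bookkeeping precise, and then evaluating $F^-_{W,\rr_{\pm}}$ on \emph{twisted} homology rather than on a generic class. A related difficulty is that $\partial Z=Y_{2g-2}$ has Euler number one below the ``large degree'' threshold of Theorem \ref{calcthm}, so the group $HF^-_{red}(Y_{2g-2},\s_{g-1})$ containing $\Psi_{Z,\kk}$ is not directly supplied by Section \ref{calcsec}; this is precisely why the argument should be organised to exploit the free rank‑one structure on the \emph{target} $Y_{2g-1}$ (which is in range) and to extract the needed injectivity of $F^-_{W}$ from the geometry of $W$ rather than from an explicit model of the source group.
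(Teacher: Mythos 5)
Your overall plan matches the paper's: express $\Psi_{\tZ,\tkk_\pm}$ via the composition law as images of $\Psi_{Z,\kk}$ under $F^-_{W,\rr_\pm}$, and exploit the structure of $HF^-(Y_{2g-1})$ (which is in range for Theorem \ref{calcthm}) rather than the out-of-range $Y_{2g-2}$. You also correctly identify the central difficulty. But there are two concrete errors in the execution.

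First, the claim that turning $W$ upside down produces a left-inverse cobordism with $W\cup_{Y_{2g-1}}\overline W\cong Y_{2g-2}\times I$, from which $F^-_W$ is split injective, is simply false. $W$ is a single $2$-handle attachment; it is not an invertible cobordism, and the double $W\cup\overline W$ is not a product (it contains an essential sphere of negative square). The injectivity you want cannot be obtained this way. What actually works, and what the paper does, is to run the argument in the $F^+$ direction: $W$ is the standard surgery cobordism from $Y_{1-2g}$ to $Y_{2-2g}$, so it sits in a surgery exact triangle, and the coboundary map $HF^+(\#^{2g}S^1\times S^2)\to\bigoplus HF^+(Y_{1-2g})$ is shown to be injective; exactness then gives surjectivity of the total map $F^+_W=\sum_\rr F^+_{W,\rr}$. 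One then uses degree-shift computations to show that only the two \spinc structures $\rr_{\pm(g-1)}$ can carry the image into degree $d^+_{2-2g}$, and finally duality between $F^+_{W,\rr}$ and $F^-_{W,\rr}$ together with the fiber-sum pairing (from Theorem \ref{nonvanthm}) gives a contradiction if both $F^-_{W,\tkk_\pm}$ killed $\Psi_{Z,\kk}$.

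Second, your identification of the boundary \spinc structures is off. You assert that $\rr_\pm$ restrict to $Y_{2g-1}$ as two consecutive \spinc structures $\s_{g-1}$ and $\s_g$, the latter extremal. In fact the two relevant restrictions are the conjugate pair $\s_{g-1}$ and $\s_{-(g-1)}$; both lie at the boundary of the ``interesting'' range $|k|\le g-1$, and each has $HF^-_{red}$ a free rank-one $R_Y$-module ($Z_{2g}\cong R_Y$) in a single degree. Neither is the extremal $\s_g$, whose Floer homology is just $\T$. This matters because your shortcut of ``only the $\s_{g-1}$ factor carries reduced homology, so we need only control that one map'' does not apply: both $\tkk_+$ and $\tkk_-$ land in nontrivial reduced modules, and the argument must genuinely handle both \spinc structures (which is why the paper's degree argument identifies exactly the two of them and sums the contributions).
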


It is worth pointing out that the restriction $\tkk_\pm|_Z$ is just the canonical \spinc structure $\kk$ on $Z$.

The result above might be seen as an analogue for manifolds with boundary of the fact that blowing up preserves \OS invariants of closed manifolds---though of course it is not the case that $\tZ$ is a blowup of $Z$.

\begin{proof} We study the handle structure of $W$ (for much of this proof, we view $W$ as mapping the opposite direction from previously, i.e., think of $W$ as a cobordism from $Y_{n-1}$ to $Y_n$). There is a standard handle picture for the neighborhood of a genus $g$ surface of square $n$ containing $2g$ 1-handles and a single 2-handle; the corresponding surgery diagram is the connected sum of $g$ copies of $B(0,0)$, with framing $n$. The blowup $U$ of this neighborhood is obtained by adding a 2-handle attached to a disjoint unknot with framing $-1$; sliding the $n$-framed Borromean knot over this 2-handle gives a diagram in which the surface $\tSigma$ is visible. Hence in words, $W$ consists of a single 2-handle, attached along the meridian of the $n-1$-framed Borromean knot with framing $-1$, which is to say it is the ``standard'' surgery cobordism from $Y_{n-1}$ to $Y_{n}$ (thinking of the latter as the results of surgery on $\#^gB(0,0)$). Figure \ref{Wpicture} depicts this handle description for $W$ in the case $g = 2$.
\begin{figure}
\includegraphics[width=10cm]{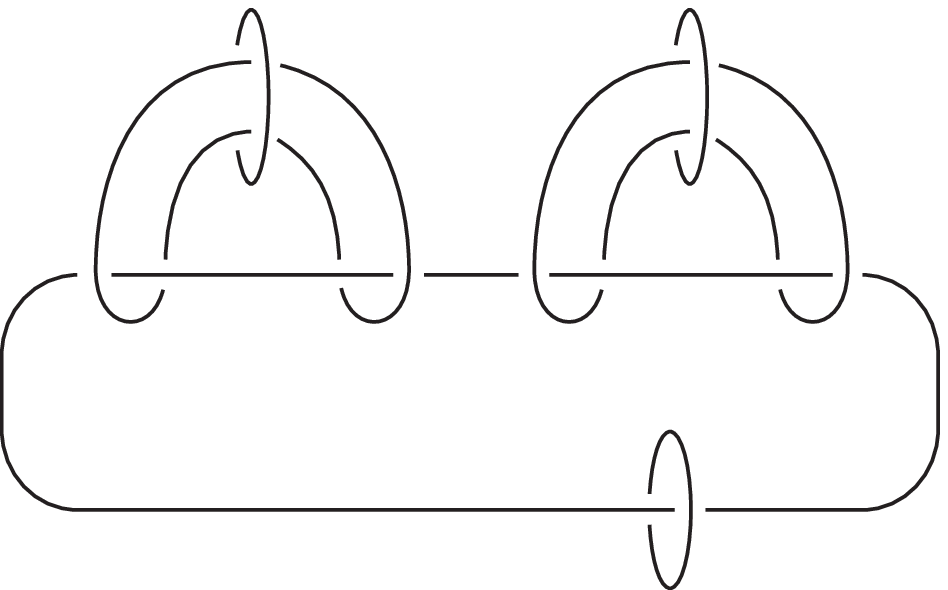}
\put(-68,175){$\langle 0\rangle$}
\put(-200,175){$\langle 0\rangle$}
\put(-25,125){$\langle 0\rangle$}
\put(-275,125){$\langle 0\rangle$}
\put(-73,0){$-1$}
\put(5,60){$\langle n-1\rangle$}
\caption{Handlebody diagram for $W: Y_{n-1}\to Y_n$.}\label{Wpicture}
\end{figure}
The brackets in Figure \ref{Wpicture} follow the notation of Gompf and Stipsicz \cite{GS}, and indicate that the diagram describes a 4-dimensional cobordism built by adding a single 2-handle to the 3-manifold given by the bracketed surgery diagram.

Now, the relative invariant (in the canonical \spinc structure) of the complement of a surface of square $n$ in a symplectic 4-manifold is an element of $HF^-_{d_n^-}(Y_{-n}, \kk)$, where $d_n^-$ is a rational number that is calculated explicitly below. (Here the circle bundle $Y_{-n}$ appears rather than $Y_n$ since we take its orientation to be induced by the complement of the surface.) The pairing \eqref{pairingdef} in the case of a torsion \spinc structure, e.g. the restriction of $\kk$ to $Y_n$, is nontrivial only on the factors $HF^+_i \otimes_{R_Y} HF^-_{-i-2}$ (c.f. \cite{OS3}): thus the ``dual'' summand of the Floer homology with respect to that pairing is $HF^+_{d_n^+}(Y_n,\kk)$ where $d_n^+ = -d_n^--2$. For the rest of the proof we specialize to the case at hand, namely $n = 2-2g$.

%Recall that the relative invariant of a 4-manifold with boundary is the element of the Floer homology of the boundary obained by mapping a top-degree generator $\Theta^-\in HF^-(S^3)$ into the twisted Floer homology of the boundary via the map in $HF^-$ induced by the cobordism obtained by removing a ball from the manifold in question; when $b^+>0$ the relative invariant lies in the reduced submodule. Recall also that if $W$ is a cobordism with \spinc structure $\s$ that restricts as a torsion structure on the boundary, the induced homomorphism shifts degree by the quantity $d(\s) = \frac{1}{4}(c_1^2(\s) - 3\sigma(W) - 2e(W))$, where $\sigma(W)$ is the signature of the intersection form on $W$ and $e(W)$ is the Euler characteristic. If we take $W$ to be obtained from a symplectic 4-manifold by removing a small 4-ball along with the (disjoint) neighborhood of a surface of square $n$, we see that the relative invariant for the complement of the surface, in the canonical \spinc structure, lies in $HF^-_{d_n^-}(Y_{-n}, \kk)$ where
%\[
%d_n^- = \ts\frac{1}{4n}(-4n -(2g-2-n)^2 - (1+4g)n).
%\]
%(This makes use of the fact that for the canonical \spinc structure $\kk$ on a closed symplectic manifold $X$, $c_1^2(\kk) = 3\sigma(X) + 2e(X)$).

We will see below that the two \spinc structures $\tkk_\pm$ (restricted to $W$) have the following properties:
\begin{itemize}
\item The restrictions of $\tkk_\pm$ to $Y_{1-2g}$ are the two \spinc structures $\s_{\pm(g-1)}$ in the notation of the previous subsection (in particular, they are conjugate \spinc structures).
\item The sum of the two maps in fully-twisted Floer homology induced by $(W,\tkk_+)$ and $(W,\tkk_-)$ gives a {\it surjection}
\[
F^+_{W, \tkk_\pm}: HF^+_{d_{1-2g}^+}(Y_{1-2g},\tkk_+)\oplus HF^+_{d_{1-2g}^+}(Y_{1-2g},\tkk_-) \to HF^+_{red, d_{2-2g}^+}(Y_{2-2g},\kk).
\]
\end{itemize}
In the above, $\tkk_\pm$ and $\kk$ refer to the restrictions of $\tkk_\pm$ to $Y_{1-2g}$ and $Y_{2-2g}$ (in the latter case, this is the same as the restriction of the canonical \spinc structure on $X$ to the boundary of the neighborhood of $\Sigma$). Additionally, we are implicitly composing the cobordism-induced homomorphism $F^+_{W,\tkk_\pm}$ with the projection $HF^+\to HF^+_{red}$ on the range side.

Granted these two points, we claim that the map
\[
F^-_{W,\tkk_\pm}: HF^-_{d_{2-2g}^-}(Y_{2g-2}, \kk)\to HF^-_{d_{1-2g}^-}(Y_{2g-1},\tkk_+)\oplus HF^-_{d_{1-2g}^-}(Y_{2g-1},\tkk_-)
\]
is nonzero on the relative invariant $\Psi_{Z,\kk}$. (Observe that since $H^1(Z)\to H^1(Y_{2g-2})$ is trivial, the relative invariant does lie in the fully-twisted Floer homology.)  Indeed, suppose that $F^-_{W,\tkk_\pm}(\Psi_{Z,\kk}) = 0$. Then duality for maps in twisted-coefficient Floer homology \cite{products} implies that for any $\xi \in HF^+(Y_{1-2g}, \tkk_{\pm})$ we have 
\[
\langle F^+_{W,\tkk_{\pm}}(\xi), \Psi_{Z,\kk}\rangle = \langle \xi, F^-_{W,\tkk_\pm}(\Psi_{Z,\kk})\rangle = 0.
\]
Since $F^+_{W,\tkk_\pm}$ is onto the reduced Floer homology in the relevant degree, this means that $\Psi_{Z,\kk}$ pairs trivially with every member of $HF^+_{red}(Y_{2-2g},\kk)$. But according to the fiber sum argument in the proof of Theorem \ref{nonvanthm} (c.f. \eqref{relinvtpair}), there must be an element of $HF^+_{red}(Y_{2-2g},\kk)$ pairing nontrivially with $\Psi_{Z,\kk}$. This contradiction implies that $F^-_{W,\tkk_\pm}(\Psi_{Z,\kk}) \neq 0$, i.e., at least one of $\alpha^+ = F^-_{W,\tkk_+}(\Psi_{Z,\kk})$ or $\alpha^- = F^-_{W,\tkk_-}(\Psi_{Z,\kk})$ is a nonzero element of $HF^-(Y_{2g-1}, \tkk_+)$ (resp. $HF^-(Y_{2g-1}, \tkk_-)$).

By the composition law, the element $\alpha^+$ is exactly the relative invariant for $\tZ$ in the \spinc structure $\tkk_+$, and similarly for $\alpha^-$. This proves the theorem, modulo the two points above.
\end{proof}

The two points deferred in the above proof follow from straightforward but somewhat laborious calculations in cohomology, combined with the structure of the Floer homology of circle bundles derived in the last subsection. We begin by calculating the degree $d_n^-$ of the relative invariant of the complement of a surface of square $n$ in a symplectic manifold. 

First, recall that if $X$ is a closed symplectic manifold with canonical \spinc stucture $\kk$, then $c_1^2(\kk) = 3\sigma(X) + 2e(X)$ where $\sigma(X)$ is the signature of $X$ and $e(X)$ the Euler characteristic. On the other hand, if $W$ is a cobordism from $Y_1$ to $Y_2$ and $\s\in Spin^c(W)$ is a \spinc structure such that the restrictions of $c_1(\s)$ to each of $Y_1$ and $Y_2$ is a torsion class, then by \cite{OS3}, the induced homomorphism in Floer homology shifts degree by
\begin{equation}\label{degshift}
d(\s) = \ts\frac{1}{4}(c_1^2(\s) - 3\sigma(W) - 2e(W)).
\end{equation}

In particular, if we remove two disjoint 4-balls from a closed symplectic 4-manifold $X$, then the resulting cobordism $S^3\to S^3$ has $d(\kk) = 1$. 

Now suppose $\Sigma\subset X$ is a symplectic surface: then the adjunction formula says that $\langle c_1(\kk), \Sigma\rangle = 2g-2-n$, where $n = \Sigma.\Sigma$ is the self-intersection of $\Sigma$. Let $N$ be a tubular neighborhood of $\Sigma$ in $X$ and $\kk_N$ the restriction of the canonical \spinc structure to $N$, so that $c_1(\kk_N)$ is equal to $2g-2-n$ times the generator of $H^2(N;\zee)$ (the generator being specified as the Kronecker dual of $[\Sigma]$). As a cohomology class whose restriction to $\partial N$ is torsion, the Chern class of $\kk_N$ has a well-defined square in rational cohomology, which is
\[
c_1^2(\kk_N) = \frac{(2g-2-n)^2}{n}.
\]
Assume that $n<0$, so that $\sigma(N) = -1$. We see $e(N\setminus B^4) = 1-2g$, from which it follows that the degree shift induced by $N\setminus B^4$ with \spinc structure $\kk_N$ is
\[
d(\kk_N) = \frac{1}{4n}((2g-2-n)^2 + (1+4g)n).
\]
Additivity of the degree shifts shows that if $Z = X\setminus (N\sqcup B^4)$ is the complement of $\Sigma$ (with a small ball removed) then $d(\kk_Z) = 1-d(\kk_N)$. Hence, since the generator $\Theta^-\in HF^-(S^3)$ lies in degree $-2$, the relative invariant $\Psi_{Z,\kk_Z} = F^-_{Z,\kk_Z}(\Theta^-)$ lies in degree
\[
d_n^- = -2+d(\kk_Z) = \frac{1}{4n}(-4n- (2g-2-n)^2 - (1+4g)n)
\]
if $n<0$. Taking $n = 2-2g$ in particular this shows:

\begin{lemma} The relative invariant of the complement of a symplectic surface of genus $g$ and self-intersection $2-2g$, equipped with the canonical \spinc structure, lies in degree $d_{2-2g}^-$ of the Floer homology of the circle bundle $Y_{2g-2}$ over $\Sigma$, $HF^-_{d_{2-2g}^-}(Y_{2g-2},\kk)$, where
\[
d_{2-2g}^- = -\frac{13}{4} + g.
\]
The corresponding dual group is $HF^+_{d_{2-2g}^+}(Y_{2-2g},\kk)$, where 
\[
d_{2-2g}^+ = -d_{2-2g}^- - 2 = \frac{5}{4} - g.
\]\hfill$\Box$
\end{lemma}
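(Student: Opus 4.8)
The plan is to specialize the general expression for $d_n^-$ obtained in the discussion preceding the statement to the value $n = 2-2g$, and then to read off $d_{2-2g}^+$ from the grading behaviour of the pairing \eqref{pairingdef}. All of the geometric input has already been assembled above: the degree--shift formula \eqref{degshift}, the adjunction identity $\langle c_1(\kk),[\Sigma]\rangle = 2g-2-n$ yielding $c_1^2(\kk_N) = (2g-2-n)^2/n$, the topological facts $\sigma(N) = -1$ and $e(N\setminus B^4) = 1-2g$ (valid because $n = 2-2g < 0$, using $g\geq 2$), the additivity $d(\kk_Z) = 1 - d(\kk_N)$, and the fact that $\Theta^- \in HF^-_{-2}(S^3)$. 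Together these give
\[
d_n^- \;=\; -2 + d(\kk_Z) \;=\; \frac{1}{4n}\bigl(-4n - (2g-2-n)^2 - (1+4g)n\bigr)
\]
for every $n<0$, which is exactly the formula displayed just above the statement.

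First I would substitute $n = 2-2g$. Then $2g-2-n = 4(g-1)$, so $(2g-2-n)^2 = 16(g-1)^2$ while $4n = -8(g-1)$, and a short computation shows that the numerator $-4n - (2g-2-n)^2 - (1+4g)n$ equals $-2(4g-13)(g-1)$. Since $g\geq 2$ the factor $g-1$ is nonzero, so it cancels against the $g-1$ in the denominator, leaving
\[
d_{2-2g}^- \;=\; \frac{-2(4g-13)(g-1)}{-8(g-1)} \;=\; \frac{4g-13}{4} \;=\; g - \frac{13}{4}.
\]
For the dual group I would invoke the fact, recalled from \cite{OS3}, that for a torsion \spinc structure the pairing \eqref{pairingdef} is nontrivial only on the factors $HF^+_i \otimes_{R_Y} HF^-_{-i-2}$; since $\Psi_{Z,\kk}$ lies in degree $d_{2-2g}^-$ of $HF^-(Y_{2g-2},\kk)$, the summand of $HF^+(Y_{2-2g},\kk)$ that pairs nontrivially with it sits in degree $d_{2-2g}^+ = -d_{2-2g}^- - 2 = \tfrac{5}{4} - g$.

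There is no genuine obstacle here; the statement is a specialization of an already--derived formula. The only points demanding care are bookkeeping ones: keeping the orientation conventions straight, so that the complement of $\Sigma$ has boundary $Y_{2g-2}$ rather than $Y_{2-2g}$ and the cobordisms are composed in the correct order, and using $n<0$ to pin down $\sigma(N) = -1$ — which is precisely where the hypothesis $g\geq 2$ (equivalently $n = 2-2g < 0$) is needed.
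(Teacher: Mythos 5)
Your proposal is correct and follows essentially the same route as the paper: you specialize the formula $d_n^- = \frac{1}{4n}\bigl(-4n - (2g-2-n)^2 - (1+4g)n\bigr)$ derived just above the lemma to $n = 2-2g$, and then use the fact that the pairing \eqref{pairingdef} in a torsion \spinc structure is nontrivial only on $HF^+_i \otimes_{R_Y} HF^-_{-i-2}$ to get $d^+_{2-2g} = -d^-_{2-2g}-2$. The arithmetic (factoring the numerator as $-2(4g-13)(g-1)$ and cancelling $g-1$, valid since $g\geq 2$ is in force for this section) checks out.
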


To prove the second bulleted claim in the proof of Theorem \ref{blowupthm}, we first analyze the topology of $W$ a bit further (the first bulleted claim will come out along the way). As before, let us write $D_n$ for the disk bundle over $\Sigma$ having degree $n$, and for a fixed $n$ define $M = D_{n}\# \cpbar$, the blowup of $D_{n}$ at a point of $\Sigma$ (identified with the zero-section in $D_{n}$). Tubing $\Sigma$ to the exceptional curve gives the blown-up surface $\tSigma$ of self-intersection $n-1$, and the complement of the neighborhood $D_{n-1}$ of $\tSigma$ is the cobordism $W$ under consideration. Thus 
\[
M = D_{n}\#\cpbar= D_{n-1} \cup_{Y_{n-1}} W.
\]

By way of notation, write $s_{n}$ and $e$ for the generators of $H_2(M;\zee)$ given by the homology classes of $\Sigma$ and the exceptional sphere $\cee P^1\subset \cpbar$. Let $s_{n-1}\in H_2(D_{n-1};\zee)$ be the generator equal to the class of $\tSigma$. 

It is a simple matter using the sequence in homology for the pair $(W, Y_{n-1})$ to see that $H_2(W;\zee) \cong A \oplus \zee\langle a\rangle$, where $A \cong \zee^{2g}$ is the image of $H_2(Y_{n-1};\zee)$ in $H_2(W)$, and $\zee\langle a \rangle$ indicates an infinite cyclic summand generated by a class $a$ whose image in $H_2(W,Y_{n-1})$ is $n-1$ times a generator. Now, a portion of the homology sequence for $(M,W)$ reads
\begin{diagram}
&\rTo& H_2(W)& \rTo &H_2(M)&\rTo^B &H_2(M,W)&\rTo\\
&\rTo& A\oplus \zee\langle a \rangle & \rTo &\zee\oplus\zee &\rTo& \zee &\rTo
\end{diagram}
where we observe that by excision, $H_2(M,W)\cong H_2(D_{n-1},Y_{n-1})$ is generated by the class $d$ of a normal disk to $\Sigma$ in $D_{n-1}$. Hence the homomorphism $B$ above is given by $x\mapsto (x. s_{n-1})\, d$, where $x\in H_2(M)$ and $x.s_{n-1}$ denotes the intersection pairing. Since $s_{n-1} = s_{n} - e$, we infer that $B$ acts on the generators of $H_2(M)$ by
\[
B(s_{n})= n\, d \qquad\mbox{and}\qquad B(e)= d.
\]
Under the homomorphism $H_2(W)\to H_2(M)$, it is easy to see that the subgroup $A$ maps to $0$, and thus we can arrange that $a$ maps to the generator $s_{n} - ne$ of $\ker(B)$. From this we can infer in particular that the nontrivial portion of the intersection form on $W$ is given by the self-intersection 
\begin{equation}\label{square}
a^2 =(s_{n} - n e)^2 = -n(n-1).
\end{equation}

Turning now to the Mayer-Vietoris sequence in homology for the decomposition $M = D_{n-1} \cup W$, we have
\begin{diagram}
&\rTo &H_2(Y_{n-1}) & \rTo & H_2(D_{n-1})\oplus H_2(W) & \rTo^C & H_2(M) & \rTo\\
& \rTo & \zee^{2g} & \rTo & \zee\langle s_{n-1}\rangle \oplus (A\oplus \zee\langle a\rangle) & \rTo & \zee\langle s_{n},e\rangle & \rTo
\end{diagram}
where on the factor $\zee\langle s_{n-1},a\rangle$, $C$ is represented in the given bases by the matrix
\[
C = \left[\begin{array}{cc} 1 & 1 \\ -1 & -n\end{array}\right].
\]

Passing to cohomology, let us write $s_{n}^*, e^*$ for the basis of $H^2(M;\zee)$ Kronecker dual to $s_{n},e\in H_2(M;\zee)$, and similar for the other relevant groups. Since $s_{n}$ and $e$ are represented by symplectic surfaces, the adjunction formula quickly shows that the canonical class $\tK = c_1(\tkk)$ is 
\[
\tK = (2g-2-n) s^*_{n} - e^*.
\]
Applying the transpose $C^*$ of the homomorphism $C$, we have
\[
C^*(\tK) = (2g-1-n)s_{n-1}^* \oplus (2g-2)a^* \in H^2(D_{n-1})\oplus H^2(W),
\]
and in particular the restriction of the canonical class to $W$ is $\tK|_W = (2g-2)a^*$. 

\begin{lemma}\label{restrlemma} Let $\{\s_k\}$, $k \in \zee/(n-1)\zee$, resp. $\{\tt_\ell\}$, $\ell \in \zee/n\zee$, denote the collection of \spinc structures on $Y_{n-1}$ (resp. $Y_{n}$) defined by the condition that $\s_k$ is the restriction of any \spinc structure $\rr_k$ on the disk bundle $D_{n-1}$ over $\Sigma$ having $\langle c_1(\rr_k),[\Sigma]\rangle = 2k - (n-1)$ modulo $2(n-1)$ (resp., the same conditions with $n-1$ replaced by $n$). Let $W: Y_{n-1}\to Y_{n}$ be the cobordism considered above, equipped with its symplectic structure as above and write $\tkk$ for the canonical \spinc structure on $W$. 

Then $\tkk$ defines a \spinc cobordism between $\s_{g-1}$ and $\tt_{g-1}$, and has $c_1(\tkk) = (2g-2)a^*$ where $a^*\in H^2(W;\zee)$ is the generator described above. More generally, if $\rr_\ell\in Spin^c(W)$ has $c_1(\rr_\ell) = 2\ell a^*$, then $\rr_\ell$ interpolates between $\s_\ell$ and $\tt_\ell$ (where we reduce $\ell$ modulo $n-1$ and $n$, respectively).
\end{lemma}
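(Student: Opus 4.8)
The plan is to realize each of the relevant \spinc structures on $W$ as the restriction of a \spinc structure on the closed-up manifold, and then to read its two boundary restrictions off the two descriptions $M = D_{n-1}\cup_{Y_{n-1}} W = D_n\#\cpbar$ (the latter with $\partial M = Y_n$) set up above. The preliminary point that makes this clean is that $H^2(W;\zee)$ is torsion-free: reading the handle picture for $W$ (a single $2$-handle on $Y_{n-1}\times I$ attached along $\mu$) gives $H_1(W)\cong H_1(Y_{n-1})/\langle[\mu]\rangle$, and since the meridian $\mu$ of $\#^g B(0,0)$ generates the $\zee/(n-1)$ summand of $H_1(Y_{n-1})$ we obtain $H_1(W)\cong\zee^{2g}$ and hence $H^2(W;\zee)\cong\zee^{2g+1}$. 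Thus a \spinc structure on $W$ is pinned down by its Chern class; in particular $\tkk|_W$ is the \spinc structure $\rr_{g-1}$ determined by $c_1=(2g-2)a^*$ (using the value $\tK|_W=(2g-2)a^*$ computed just above), so the first assertion of the lemma is the case $\ell=g-1$ of the second, and it is enough to treat a general $\rr_\ell$.

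The first step is to extend $\rr_\ell$ to $M$. Transposing the Mayer--Vietoris matrix $C$ gives the restriction map $H^2(M)\to H^2(D_{n-1})\oplus H^2(W)$ as $p\,s_n^*+q\,e^*\mapsto (p-q)\,s_{n-1}^*\oplus(p-nq)\,a^*$ on the relevant summands (and, just as in the computation of $\tK|_W$, the restriction to $W$ of any class built from $s_n^*,e^*$ has no $A^*$-component, since $s_n^*$ and $e^*$ restrict to torsion classes on $Y_{n-1}$). A short computation with the normal Euler classes gives $w_2(M)=n\,s_n^*+e^*\pmod 2$, so that $\alpha_\ell:=(2\ell+n)\,s_n^*+e^*$ is characteristic; choose $\S_\ell\in Spin^c(M)$ with $c_1(\S_\ell)=\alpha_\ell$. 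Its restriction to $W$ has $c_1(\S_\ell|_W)=\big((2\ell+n)-n\big)a^*=2\ell\,a^*$, hence equals $\rr_\ell$.

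The second step reads off the two boundary restrictions of $\S_\ell$. Restricting to $D_{n-1}$ gives $\langle c_1(\S_\ell|_{D_{n-1}}),[\tSigma]\rangle=2\ell+n-1$; since $\s_k$ is by definition the restriction to $Y_{n-1}$ of a \spinc structure on $D_{n-1}$ with $\langle c_1,[\tSigma]\rangle\equiv 2k-(n-1)\pmod{2(n-1)}$, we read off $k\equiv\ell\pmod{n-1}$, i.e.\ $\rr_\ell|_{Y_{n-1}}=\s_\ell$. For the other end, $\langle c_1(\S_\ell),e\rangle=\langle e^*,e\rangle=1$, so $\S_\ell$ is the blow-up of a \spinc structure $\kk'_\ell$ on $D_n$ with $c_1(\kk'_\ell)=(2\ell+n)\,\bar s_n^*$, where $\bar s_n^*$ generates $H^2(D_n)$ with $\langle\bar s_n^*,[\Sigma]\rangle=1$; this blow-down is supported in the interior of $M$, away from $\partial M=Y_n$, so $\rr_\ell|_{Y_n}=\kk'_\ell|_{Y_n}$, and from $\langle c_1(\kk'_\ell),[\Sigma]\rangle=2\ell+n$ and the defining congruence $\langle c_1,[\Sigma]\rangle\equiv 2m-n\pmod{2n}$ for $\tt_m$ we get $m\equiv\ell\pmod n$, i.e.\ $\rr_\ell|_{Y_n}=\tt_\ell$. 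Taking $\ell=g-1$ recovers all the assertions about $\tkk$.

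As the surrounding text anticipates, there is no conceptual difficulty here; the work is bookkeeping. The points that want care are: (i) the lift $\alpha_\ell$ of the target class on $W$ must be chosen to be characteristic on $M$, which forces the particular combination $(2\ell+n)s_n^*+e^*$ rather than, say, $2\ell\,s_n^*$, which is not characteristic; (ii) one must fix mutually compatible orientation conventions for the generators $s_{n-1}^*,\bar s_n^*,e^*,a^*$ and for the blow-down relation, so that the Chern-number computations and the separate mod-$(n-1)$ and mod-$n$ reductions come out consistently; and (iii) the small homological inputs $H_1(W)\cong\zee^{2g}$ and $w_2(M)=n\,s_n^*+e^*$. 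I expect (ii) to be the most error-prone, though it is entirely mechanical once the conventions of the preceding discussion are pinned down.
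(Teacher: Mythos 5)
Your proof is correct, and while it rests on the same Mayer--Vietoris data as the paper's argument, it is organized differently. The paper proves the two identities $a^*|_{Y_{n-1}}=s_{n-1}^*|_{Y_{n-1}}$ and $a^*|_{Y_n}=s_n^*|_{Y_n}$ directly (from $C^*(s_n^*)=s_{n-1}^*\oplus a^*$ and exactness of Mayer--Vietoris), then handles the base case $\ell=g-1$ via the adjunction formula for $\tkk$, and bootstraps to arbitrary $\ell$ using the shift rules $\s_{k+1}=\s_k+s_{n-1}^*$, $\tt_{k+1}=\tt_k+s_n^*$. You instead verify the statement for each $\ell$ at once by extending $\rr_\ell$ to a characteristic $\S_\ell$ on $M$, and then read the two boundary restrictions off the two decompositions of $M$ --- one via the restriction to $D_{n-1}$, the other via a blowdown to $D_n$. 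This avoids invoking the adjunction formula as a separate input, at the mild cost of verifying that $(2\ell+n)s_n^*+e^*$ is characteristic and of a slightly more delicate discussion of the $A^*$-component (which you handle correctly: those classes pair trivially because they factor through the torsion Chern class on $Y_{n-1}$). Both routes are about the same length; the paper's is perhaps cleaner in that the key identity \eqref{restrictions} is stated once and the rest is formal, while yours has the advantage of producing the interpolating \spinc structures on $M$ explicitly, which some readers may find more concrete.
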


\begin{proof} We have just calculated the Chern class of the canonical \spinc structure; the fact that $\tkk$ connects $\s_{g-1}$ and $\tt_{g-1}$ is a quick consequence of the adjunction formula. To prove the general case, observe first that any \spinc structure $\rr$ on $W$ satisfying the hypotheses can be written $\rr = \tkk + ma^*$ for some $m\in \zee$.

Now, the \spinc structures $\s_k\in Spin^c(Y_{n-1})$ have the property that $\s_{k+1} = \s_k + s_{n-1}^*$, where $s_{n-1}^*$ is (the restriction to the boundary of) the generator of $H^2(D_{n-1})$ considered above (and similarly for the $\tt_\ell$). Hence the lemma follows from the observation that 
\begin{equation}\label{restrictions}
a^*|_{Y_{n-1}} = s_{n-1}^*|_{Y_{n-1}} \qquad\mbox{and}\qquad a^*|_{Y_n} = s_n^*|_{Y_n}.
\end{equation}
These two facts in turn are consequences of the analysis above. Indeed, thinking of $W$ as a subspace of $M = D_{n}\#\cpbar = D_{n-1} \cup_{Y_{n-1}} W$, then according to the preceding
\[
C^*: H^2(M)\to H^2(D_{n-1})\oplus H^2(W)
\]
has $C^*(s^*_{n}) = s_{n-1}^* \oplus a^*$. In particular, the restriction of $s_{n}^*$ to $W$ is equal to $a^*$, so the second equation in \eqref{restrictions} holds. Exactness of the Mayer-Vietoris sequence then shows that if $\rho = \rho_{D_{n-1}} - \rho_{W}: H^2(D_{n-1}) \oplus H^2(W)\to H^2(Y_{n-1})$ is the restriction, then $0 = \rho(s_{n-1}^*\oplus a^*) = \rho_{D_{n-1}}(s_{n-1}^*) - \rho_W(a^*)$, giving the first part of \eqref{restrictions}. 
\end{proof}

We now turn to the maps in Floer homology induced by $W$, in the case $n = 2-2g$. As observed previously, $W$ is the standard surgery cobordism from $Y_{1-2g}$ to $Y_{2-2g}$ (c.f. Figure \ref{Wpicture}), and as such it induces the homomorphism at the top of the following surgery exact triangle (with fully-twisted coefficients understood):
\[
\begin{diagram}
\bigoplus HF^+(Y_{1-2g}) && \rTo^{F_W^+} & & \bigoplus HF^+(Y_{2-2g})\\
&\luTo_f & & \ldTo & \\
&& HF^+(Y) &&
\end{diagram}
\]
Here the direct sums are over all \spinc structures on the circle bundles $Y_{1-2g}$, $Y_{2-2g}$  extending the torsion \spinc structure on $Y= \#^{2g}S^1\times S^2$, and the map $F_W^+$ is the sum of the homomorphisms induced by $W$ with all choices of \spinc structure. We claim first that $f: HF^+(Y)\to \bigoplus HF^+(Y_{1-2g})$ is injective.

To see this, it suffices to show that a single component of $f$ is injective. We consider the component mapping into $HF^+(Y_{1-2g}, \s_{-g})$, where as usual, $\s_k\in Spin^c(Y_{1-2g})$ is characterized as the restriction of any \spinc structure $\rr$ on the cobordism $W_0: Y\to Y_{1-2g}$ satisfying $\langle c_1(\rr), [S]\rangle - (1-2g) = 2k$ modulo $2(1-2g)$, where $S\subset W_0$ is a capped-off Seifert surface for the knot $\#^g B(0,0)$. (By remark \ref{stupidremark}, our uses of the symbol $\s_k$ are consistent.) This component of $f$ is the sum of homomorphisms induced by all such $\rr\in Spin^c(W_0)$. We have seen that 
\[
HF^+(Y_{1-2g}, \s_k) \cong H_*(C\{i\geq 0 \mbox{ and } j\geq k\}),
\]
and furthermore it is proved in \cite{OSknot} that when $k \neq 0$ the top-degree component of the map $HF^+(Y)\to HF^+(Y_{1-2g}, \s_k)$ induced by $W_0$ is given by the map in homology coming from the natural projection
\[
C\{i\geq 0\} \to C\{i\geq 0 \mbox{ and } j\geq k\}.
\]
That is to say, this projection gives one of the homomorphisms whose sum equals the component of $f$ under consideration, and the remaing homomorphisms have strictly lower degree. Taking $k = -g$, and recalling that the knot Floer homology is supported between the lines $j = i \pm g$, this projection is obviously an isomorphism. It follows that the component of $f$ mapping into $HF^+(Y_{1-2g}, \s_{-g})$ is also an isomorphism, and injectivity of $f$ follows.

We infer that the map $F_W^+$ in the surgery triangle is surjective. This fact alone, however, is insufficient for our purposes since $F_W^+$ is a sum over the maps induced by all \spinc structures on $W$: indeed, we are interested particularly in the \spinc structures $\rr\in Spin^c(W)$ whose restriction to $Y_{2-2g}$ is $\tt_{g-1}$ (i.e., the same as that of the canonical \spinc structure). According to Lemma \ref{restrlemma}, if $\rr\in Spin^c(W)$ interpolates between $\s\in Spin^c(Y_{n-1})$ and $\tt\in Spin^c(Y_n)$, then so does $\rr + (2g-1)(2g-2)a^*$. Observe also that if $\rr$ connects $\s_k$ to $\tt$, for $\s_k$ as in Lemma \ref{restrlemma}, then $\rr + (2g-2)a^*$ connects $\s_{k-1}$ to $\tt$. 

We define a 2-parameter family of \spinc structures on $W$ as follows. First let $\rr_{g-1,0} = \kk$ be the canonical \spinc structure, and set $\rr_{g-1,m} = \rr_{g-1,0} + m(2g-1)(2g-2)a^*$. Thus $\{\rr_{g-1,m}\}_{m \in {\mathbb Z}}$ is the family of \spinc structures connecing $\s_{g-1}$ to $\tt_{g-1}$. Now let $\rr_{\ell, 0} = \rr_{g-1,0} + (2g-2)(g-1-\ell)a^*$, so that according to the above, $\rr_{\ell,0}$ interpolates between $\s_\ell$ and $\tt_{g-1}$. Finally, define
\[
\rr_{\ell,m} = \rr_{g-1,0} + [(2g-2)(g-1-\ell) + (2g-1)(2g-2)m]a^*,
\]
so that for a fixed $\ell$, the collection $\{\rr_{\ell,m}\}_{m\in {\mathbb Z}}$ is the family of \spinc structures on $W$ connecting $\s_\ell$ to $\tt_{g-1}$. 

It is worth pausing here to observe that if $\rr\in Spin^c(W)$ has $c_1(\rr) = 2m a^*$, then since $\sigma(W) = -1$ and $e(W) = 1$, the map in Floer homology induced by $\rr$ shifts degree by
\[
d(\rr) = \frac{1}{4}\left(-\frac{4m^2}{(2g-1)(2g-2)} +1\right)
\]
(c.f. \eqref{square}). In particular, if $\{\rr_j\}$ is a family of \spinc structures on $W$ with $c_1(\rr_j) = 2m_j a^*$, then the degree shift of the corresponding homomorphisms is maximized by that $\rr_j$ for which the corresponding $m_j$ is closest to 0. 

With this in mind, we observe that since $c_1(\rr_{g-1,0}) = c_1(\kk) = (2g-2)a^*$ we have
\begin{equation}\label{c1eqn}
c_1(\rr_{\ell,m})= (2g-2)((2g-1)(2m+1)-2\ell)a^*.
\end{equation}
Fixing $\ell\in \{-g+1,\ldots,g-1\}$, we infer that the maximal degree shift induced by a \spinc structure interpolating between $\s_\ell$ and $\tt_{g-1}$ is the one given by $\rr_{\ell,m_\ell}$, where $m_\ell$ is the closest integer to the solution of $(2g-1)(2m+1) - 2\ell = 0$, i.e.,
\[
m_\ell = \left[ \frac{\ell}{2g-1} - \frac{1}{2}\right] = \left\{\begin{array}{ll} 0 & \mbox{if $\ell >0$} \\ 0 \mbox{ or } -1 & \mbox{if $\ell = 0$} \\ -1 & \mbox{if $\ell <0$}\end{array}\right.
\]
Since we will be interested mainly in the \spinc structures with maximal degree shift, let us define
\[
\rr_\ell = \left\{\begin{array}{ll} \rr_{\ell, 0} & \mbox{if $\ell \geq 0$} \\ \rr_{\ell, -1} & \mbox{if $\ell\leq 0$}.\end{array}\right.
\]
Of course there is an ambiguity here when $\ell = 0$, but the \spinc structures $\rr_{0,0}$ and $\rr_{0,-1}$ are conjugate (as follows from \eqref{c1eqn}, for example) and therefore have the same degree shift, and we will see that the distinction between these structures is unimportant.

\begin{prop} Define
\[
\tkk_\pm = \rr_{\pm(g-1)} \in Spin^c(W).
\]
Then the homomorphism in fully-twisted Floer homology
\[
F^+_{W,\tkk_+} + F^+_{W,\tkk_-}: HF^+(Y_{1-2g}, \s_{g-1})\oplus HF^+(Y_{1-2g},\s_{-g+1})\to HF^+(Y_{2-2g}, \tt_{g-1})
\]
induces a surjection from the lowest nontrivial degree in the domain group to the portion of the  reduced module $HF^+_{red}(Y_{2-2g},\tt_{g-1})$ lying in degree $d_{2-2g}^+$. %Furthermore, if $\rr\in Spin^c(W)$ has $\rr\neq \tkk_\pm$, then $F^+_{W,\rr}$ vanishes on the lowest-degree summand of $HF^+(Y_{1-2g},\rr|_{Y_{1-2g}})$.
\end{prop}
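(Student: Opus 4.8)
The plan is to leverage the surjectivity of the full cobordism map $F_W^+$ — just established from injectivity of $f$ and the surgery triangle — together with a grading argument that singles out the spin$^c$ structures $\tkk_\pm$ among all spin$^c$ structures on $W$ restricting to $\tt_{g-1}$ on $Y_{2-2g}$. First I would pin down the two ends precisely. By Theorem~\ref{calcthm}, the lowest nontrivial degree of $HF^+(Y_{1-2g},\s_{\pm(g-1)})$ is $D(g-1)$, and in that degree the group is exactly the syzygy module $Q_1$, which here equals $Z_1$ — the augmentation ideal of $R_Y$ — since the relevant $\delta$-differential vanishes; the tower part occupies degrees $\geq D(g-1)+1$. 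On the target side, using the degree-shift formula \eqref{degshift} with $\sigma(W) = -1$, $e(W) = 1$, $c_1(\tkk_\pm) = \pm(2g-2)a^*$, and $(a^*)^2 = -1/\bigl((2g-1)(2g-2)\bigr)$ (from \eqref{square} with $n = 2-2g$), one computes $d(\tkk_\pm) = \tfrac14\bigl(1 - \tfrac{2g-2}{2g-1}\bigr) = \tfrac1{4(2g-1)}$, and a direct check gives $D(g-1) + d(\tkk_\pm) = \tfrac54 - g = d_{2-2g}^+$. Thus $F^+_{W,\tkk_\pm}$ carries $Q_1 \subset HF^+(Y_{1-2g},\s_{\pm(g-1)})$ into the degree-$d_{2-2g}^+$ part of $HF^+(Y_{2-2g},\tt_{g-1})$, and, being homogeneous, it reaches that degree \emph{only} from the source in degree $D(g-1)$, i.e.\ only from $Q_1$.

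Next I would pass from surjectivity of $F_W^+$ to surjectivity of its $\tt_{g-1}$-component. Decomposing $F_W^+ = \sum_\rr F_{W,\rr}$ over spin$^c$ structures and projecting the target onto $HF^+(Y_{2-2g},\tt_{g-1})$ yields a surjection $\sum_{\rr|_{Y_{2-2g}} = \tt_{g-1}} F_{W,\rr}\colon \bigoplus_{\ell = -(g-1)}^{g-1} HF^+(Y_{1-2g},\s_\ell) \to HF^+(Y_{2-2g},\tt_{g-1})$, the domain being this full direct sum by Lemma~\ref{restrlemma}; in particular it surjects onto $HF^+_{red}(Y_{2-2g},\tt_{g-1})$ in degree $d_{2-2g}^+$. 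The crux is to show that no term of this sum other than $F^+_{W,\tkk_+}$ and $F^+_{W,\tkk_-}$ contributes after projection to $HF^+_{red}$ in that degree. Writing the relevant spin$^c$ structures as the family $\rr_{\ell,m}$ with $c_1(\rr_{\ell,m}) = (2g-2)\bigl((2g-1)(2m+1) - 2\ell\bigr)a^*$, I would observe that the integer $(2g-1)(2m+1) - 2\ell$ is always odd, equals $\pm 1$ precisely for $\rr_{\ell,m} = \tkk_\pm$, and otherwise has absolute value $\geq 3$; hence $d(\rr_{\ell,m}) < d(\tkk_\pm)$ strictly for every other $\rr_{\ell,m}$. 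Consequently such an $F_{W,\rr_{\ell,m}}$ can hit target degree $d_{2-2g}^+$ only from a source element of degree $d_{2-2g}^+ - d(\rr_{\ell,m}) > D(\ell)$; since by Theorem~\ref{calcthm} the reduced Floer homology of $\s_\ell$ is concentrated in the single degree $D(\ell)$, any such source element lies in $\mathrm{im}\bigl(\pi\colon HF^\infty(Y_{1-2g},\s_\ell) \to HF^+(Y_{1-2g},\s_\ell)\bigr)$. Because cobordism maps commute with $\pi$, its $F_{W,\rr_{\ell,m}}$-image lies in $\mathrm{im}\,\pi$ for $Y_{2-2g}$ and hence vanishes in $HF^+_{red}(Y_{2-2g},\tt_{g-1}) = \mathrm{coker}\,\pi$. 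Combining this with surjectivity of the full component, $F^+_{W,\tkk_+}$ and $F^+_{W,\tkk_-}$ acting on their degree-$D(g-1)$ domain parts $Q_1$ — which, as $D(g-1) = D(-(g-1))$, is the lowest nontrivial degree of the domain — already surject onto $HF^+_{red}(Y_{2-2g},\tt_{g-1})$ in degree $d_{2-2g}^+$.

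The hard part is the degree bookkeeping in the previous step: one must verify that for every $\ell$ with $|\ell| < g-1$ and every $m$ the inequality $d_{2-2g}^+ - d(\rr_{\ell,m}) > D(\ell)$ holds, i.e.\ that no reduced class of an intermediate $\s_\ell$ maps into the target degree. Since $d(\rr_{\ell,m})$ is largest at $m = m_\ell$, where $\lvert (2g-1)(2m_\ell+1) - 2\ell\rvert = q_\ell := 2g-1-2|\ell|$, this reduces to the single genus-independent identity $(g-1)\bigl(q_\ell^2 - (2g-1)\bigr) + 2\ell^2 = 2(2g-1)(g-1-|\ell|)^2$, whose right-hand side is positive exactly when $|\ell| \neq g-1$. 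Establishing this identity (and the parity statement that $(2g-1)(2m+1) - 2\ell$ is odd), together with the standing fact that the fully twisted $HF^\infty$ of these circle bundles is a single tower — so that $\mathrm{im}\,\pi$ is genuinely complementary to $HF^+_{red}$ and "reduced homology concentrated in degree $D(\ell)$" really does force higher-degree elements into $\mathrm{im}\,\pi$ — is essentially all of the remaining content; everything else is formal manipulation of the surgery triangle and the composition law.
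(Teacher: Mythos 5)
Your proof is correct and follows essentially the same strategy as the paper's: both start from the surjectivity of the full cobordism map (deduced from injectivity of the third map $f$ in the surgery triangle), reduce to the claim that no spin$^c$ structure on $W$ other than $\tkk_\pm$ can reach the reduced summand in degree $d^+_{2-2g}$, and settle that by a degree-shift computation combined with the fact from Theorem~\ref{calcthm} that the reduced Floer homology of $Y_{1-2g}$ sits in the single lowest degree $D(\ell)$. Your presentation is slightly more explicit about why higher-degree sources die after projection to $HF^+_{red}$ (via $\mathrm{im}\,\pi$ and naturality of $\pi$) and reorganizes the arithmetic so that the gap $d^+_{2-2g} - D(\ell) - d(\rr_\ell)$ comes out as the manifest square $(g-1-|\ell|)^2$, whereas the paper just observes that $D(\ell)+d(\rr_\ell)$ is a concave quadratic in $\ell$ maximized at $\ell = g-1$; these are cosmetic differences, and both arguments are sound.
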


In the last part of the statement, we are implicitly composing the cobordism-induced maps with the natural projection $HF^+\to HF^+_{red}$. The notation of the proposition is chosen to agree with that in Theorem \ref{blowupthm}; in the notation of Lemma \ref{restrlemma}, $\tkk_+$ is the canonical \spinc structure $\tkk$. Indeed, since $e^*|_W = (2g-2)a^*$ (according to the discussion before Lemma \ref{restrlemma}), we have that
\[
c_1(\rr_{g-1}) - c_1(\rr_{-g+1}) = 2(e^*|_W) = -2(E|_W),
\]
where $E$ is the Poincar\'e dual of the exceptional curve. (Thus the first bulleted claim in the proof of Theorem \ref{blowupthm} is proved.)

\begin{proof} We already know that the sum of the homomorphisms induced by $W$ with all possible \spinc structures is surjective. Furthermore, from Theorem \ref{calcthm} we have that the reduced Floer homology of $Y_{1-2g}$ in a fixed \spinc structure is supported in a single degree, the lowest. Since we are interested only in the image of the homomorphisms after projection to the reduced module, these two facts imply that to prove the proposition it suffices to prove: Unless $\rr = \rr_{\pm(g-1)}$, the homomorphism $F^+_{W,\rr}$ maps the lowest-degree summand of $HF^+(Y_{1-2g})$ into $HF^+_d(Y_{2-2g})$, where $d < d^+_{2-2g}$.

This fact follows from a simple calculation of degree shifts. Indeed, we may restrict attention to the maximally-shifting structures $\rr_\ell$, $\ell = -g+1,\ldots, g-1$, on $W$ (defined above), and by conjugation-invariance we may also assume $\ell\geq 0$. According to the degree shift formula, the corresponding homomorphisms shift degree by the quantity
\begin{eqnarray*}
d(\rr_\ell) &=& \frac{1}{4}(c_1^2(\rr_\ell) +1)\\
&=& -\frac{2g-2}{2g-1}\ell^2 + (2g-2)\ell - \frac{1}{4}((2g-2)(2g-1) - 1)
\end{eqnarray*}
(c.f. \eqref{square} and \eqref{c1eqn}).

From Theorem \ref{calcthm}, the Floer homology of $Y_{1-2g}$ in the \spinc structure $\s_\ell = \rr_\ell|_{Y_{1-2g}}$ has lowest nontrivial grading equal to 
\[
D(\ell) = -\frac{\ell^2}{2g-1} - \frac{g-1}{2}.
\]

Hence the image of the lowest-degree part of $HF^+(Y_{1-2g},\s_\ell)$ under $F^+_{W,\rr_\ell}$ lies in degree
\[
D(\ell) + d(\rr_\ell) = -\ell^2 + (2g-2)\ell - \frac{1}{4}(2g-1)^2.
\]
This quadratic function of $\ell$ is maximized for $\ell = g-1$, and is strictly increasing for $\ell< g-1$. Furthermore, if $\ell = g-1$ a quick check shows $D(g-1) + d(\rr_{g-1}) = \frac{5}{4} - g = d^+_{2-2g}$. Thus these degree considerations show that among all \spinc structures on $W$, the only ones that can map onto the reduced Floer homology in degree $d^+_{2-2g}$ are $\rr_{\pm(g-1)}$.
\end{proof}

This completes the proof of Theorem \ref{blowupthm}.

\begin{proof}[Proof of Theorem \ref{mainthm}] Let $K_n$, $n = 1,2,\ldots$ be a sequence of knots in $S^3$ whose Alexander polynomials are all distinct after reducing the coefficients modulo 2 (e.g., take $K_n$ to be any fibered knot of Seifert genus $n$), and let $\Sigma_n = \Sigma_{K_n}$ be the surface obtained from $\Sigma_0$ by rim surgery using $K_n$ and some fixed circle on $\Sigma_0$. We claim that no two pairs $(X, \Sigma_n)$ are diffeomorphic. 

To see this, recall from the beginning of this section that it suffices to distinguish the complements of the knotted surfaces after arbitrarily many blowups. In particular, we may assume by blowing up if necessary that the self-intersection of $\Sigma_0$ is $2-2g$.

If $g = 1$, we have a torus of square zero and much of the preceding work does not apply. Of course, it is not necessary either: in this case we know that the relative invariant of the complement is nontrivial, and the reader may verify that the top-degree portion of the Floer homology $HF^-(T^3, \s_0; R_{T^3})$ is a free module of rank 1. Hence multiplication by the Alexander polynomial has a nontrivial effect, and indeed we can distinguish infinitely many knotted surfaces this way. Since this case does not extend the results of Fintushel and Stern, we leave the details to the reader and assume from now on that $g>1$.

Now, since $\Sigma_0$ is symplectic, Theorem \ref{nonvanthm} shows that $Z_0 = X\setminus nbd(\Sigma_0)$ has a nonvanishing relative invariant in the canonical \spinc structure (since the complement of $\Sigma_0$ is simply-connected, the relative invariant lives in the fully-twisted Floer homology). Then according to Theorem \ref{blowupthm}, the complement $\tZ_0$ of the surface $\tSigma_0$ obtained by blowing up one more time has a nonvanishing invariant in at least one of the ``blown-up canonical \spinc structures'' $\tkk_\pm$, which restrict to $\partial \tZ_0 = Y_{2g-1}$ as $\s_{\pm(g-1)}$. Furthermore, this invariant lies in the topmost degree of the Floer homology of the boundary, either by the degree shift formula or by the fact that the relative invariant lies in the reduced submodule, which is contained in the topmost degree (c.f. \eqref{hf-answer}).

Consider the following invariant $\cO_{\tZ_0}$ of $\tZ_0$:
\[
{\cO}_{\tZ_0} = \sum_{\rr\in Spin^c(\tZ_0)} \Psi_{\tZ_0,\rr} \cdot e^{c_1(\rr)} \quad \in HF^-(Y_{2g-1},\s_{\pm (g-1)})[H^2(\tZ_0;\zee)],
\]
where the sum is over all \spinc structures $\rr$ on $\tZ_0$ such that $\rr|_{\partial \tZ_0} = \s_{\pm(g-1)}$ and we may, if desired, project to the top-degree part of the Floer homology (note that since $\tZ_0$ is simply-connected, \spinc structures are determined by their Chern classes). Observe that the sum above is finite since $b^+(\tZ_0)\geq 1$ (c.f. Theorem 3.3 and Lemma 8.2 of \cite{OS3}).

Since the complement of $\tSigma_n$ is given by knot surgery on $\tZ_0$, Theorem \ref{knotsurgthm} shows that after passing to coefficients in $\eff[H^1(Y_{2g-1})]$,
\[
[\cO_{\tZ_n}] = [\Delta_{K_n}(t)\cdot \cO_{\tZ_0}],
\]
where $t$ is dual to the rim torus.

Now, $\cO_{\tZ_n}$ is an invariant of $\tZ_n$ up to automorphisms of $H^2(\tZ_n;\zee)$ induced by diffeomorphisms, and module automorphisms of $HF^-(Y_{2g-1},\s_{\pm(g-1)})$ (which either respect or reverse the direct sum decomposition). According to \eqref{hf-answer}, the top-degree part of the latter Floer homology is a free module of rank 1,
\[
HF^-_{top}(Y_{2g-1},\s_{g-1})= HF^-_{top}(Y_{2g-1},\s_{-g+1})= R_{Y_{2g-1}}.
\]
It follows immediately that if $[\Delta_{K_n}(t)] \neq 1$ then $[\cO_{\tZ_n}]$ and $[\cO_{\tZ_0}]$ distinguish the smooth types of $\tZ_n$ and $\tZ_0$, and more generally that $\tZ_n$ and $\tZ_m$ are smoothly distinct if $\Delta_{K_n}(t)$ and $\Delta_{K_m}(t)$ are distinct (with coefficients taken modulo 2). 
\end{proof}

\end{document}